\documentclass[10pt]{amsart}
\usepackage[margin=1.2in]{geometry}
\usepackage{color}
\newtheorem{theorem}{Theorem}[section]
\newtheorem{lemma}[theorem]{Lemma}
\newtheorem{proposition}{Proposition}[section]

\theoremstyle{definition}

\theoremstyle{remark}

\numberwithin{equation}{section}



\usepackage{graphicx}
\DeclareGraphicsExtensions{.pdf,.png,.jpg}
\begin{document}
	\title[Anderson-Witting model of the relativistic Boltzmann equation near equilibrium]{Anderson-Witting model of the relativistic Boltzmann equation near equilibrium}
	\author[B.-H. Hwang]{Byung-Hoon Hwang}
	\address{Department of Mathematics, Sungkyunkwan University, Suwon 440-746, Republic of Korea}
	\email{bhh0116@skku.edu}

	\author[S.-B. Yun]{Seok-Bae Yun}
	\address{Department of Mathematics, Sungkyunkwan University, Suwon 440-746, Republic of Korea}
	
	\email{sbyun01@skku.edu}
	
	\keywords{special relativity, kinetic theory of gases, relativistic Boltzmann equation, relativistic BGK model, Anderson-Witting model, nonlinear energy method}
	
\begin{abstract}
%
%
Anderson-Witting model is a relaxational model equation of the relativistic Boltzmann equation, which sees a wide application in physics. In this paper,
we study the existence of classical solutions and its asymptotic behavior when the solution starts sufficiently close to  a global relativistic Maxwellian.
\end{abstract}
\maketitle
\section{Introduction}
In this paper, we consider the Cauchy problem for Anderson-Witting model \cite{AW}:
\begin{align}\label{AWRBGK1}
	\begin{split}
		\partial_t F+\hat{q}\cdot\nabla_x F&=\frac{U_\mu q^\mu}{q^0}(J(F)-F),\cr
		F_0(x,q)&=F(0,x,q).
	\end{split}
\end{align}
The momentum distribution function $F(x^\mu,q^\mu)$ represents the number density of relativistic particles at the phase point $(x^\mu ,q^\mu)~(\mu=0,1,2,3)$ in the Minkowski space where $x^\mu=(t,x)\in \mathbb{R}_+\times\mathbb{T}^3$ denotes the space-time coordinate and $q^\mu=(\sqrt{1+|q|^2},q)\in \mathbb{R}_+\times\mathbb{R}^3$ is the energy-momentum four-vector. The normalized momentum $\hat{q}$ is defined by $q/q^0$. The relativistic Maxwellian $J(F)$ is given by
$$
J(F)=\frac{n }{M(\beta )} e^{-\beta U^\mu q_\mu},
$$
where $M(\beta)$ denotes
$$
M(\beta)=\int_{\mathbb{R}^3} e^{-\beta\sqrt{1+|q|^2}}dq.
$$
Throughout this paper, we employ 
the signature of the metric $\eta^{\mu\nu}=\eta_{\mu\nu}=\text{diag}(1,-1,-,1,-1)$,
so that the Minkowski inner product $p^\mu q_\mu$ is given by
$$
p^\mu q_\mu =p^0 q^0-\sum_{i=1}^3 p^iq^i.
$$

To define the macroscopic fields, we consider the particle four-flow $N^\mu$ and energy-momentum tensor $T^{\mu\nu}$:
\begin{equation*}
N^\mu=\int_{\mathbb{R}^3}q^\mu F\frac{dq}{q^0},\qquad T^{\mu\nu}=\int_{\mathbb{R}^3}q^\mu q^\nu F\frac{dq}{q^0}.
\end{equation*}
Then, the particle density $n$ and the macroscopic velocity $U^{\mu}$ are defined by
\begin{equation}\label{n}	n^2=N^{\mu}N_{\mu}=\left(\int_{\mathbb{R}^3}F\,dq\right)^2-\sum_{i=1}^3\left(\int_{\mathbb{R}^3}Fq^i\,\frac{dq}{q^0}\right)^2,
\end{equation}
and 
$$
	 U^\mu=u^\mu+\frac{\mathbf{q}^\mu}{nh},
$$
where the Eckart four-velocity $u^\mu$, the heat flux $\mathbf{q}^\mu$, the enthalpy function $h$, the internal energy per particle $e$ and the pressure $p$ are defined as follows (We follow the Einstein summation convention):
\begin{align}\label{LLD1}
	\begin{split}
				u^\mu&=\frac{1}{n}N^\mu=\frac{1}{n}\int_{\mathbb{R}^3}Fq^\mu\,\frac{dq}{q^0},\cr
	\mathbf{q}^\mu&=\Delta^\mu_\gamma u_\nu T^{\nu\gamma}=\int_{\mathbb{R}^3}F (u^\nu q_\nu)q^\mu\frac{dq}{q^0}-u^\mu \int_{\mathbb{R}^3} F(u^\nu q_\nu)^2 \frac{dq}{q^0},\cr
			e&=\frac{1}{n}u^\mu u^\nu T_{\mu\nu}=\frac{1}{n} \int_{\mathbb{R}^3}F (u^\mu q_\mu)^2 \frac{dq}{q^0},\cr
		p&=-\frac{1}{3}\Delta^{\mu\nu}T_{\mu\nu}=\frac{1}{3}\left(\int_{\mathbb{R}^3}F (u^\mu q_\mu)^2 \frac{dq}{q^0}-\int_{\mathbb{R}^3}F  \frac{dq}{q^0} \right),\cr
			h&=e+\frac{p}{n}=\frac{1}{3n}\left(4\int_{\mathbb{R}^3}F (u^\mu q_\mu)^2 \frac{dq}{q^0}-\int_{\mathbb{R}^3}F  \frac{dq}{q^0} \right).
\end{split}\end{align}
The projection operators $\Delta^{\mu\nu}$ and $\Delta^\mu_\nu$ are defined by
\begin{equation*}
	\Delta^{\mu\nu}=\eta^{\mu\nu}-u^\mu u^\nu,\qquad \Delta^\mu_\nu= g^{\mu}_\nu - u^\mu u_\nu,
\end{equation*}
where $g^{\mu}_\nu$ denotes Kronecker delta. Note from \eqref{n} and $\eqref{LLD1}_1$ that
$$
u^\mu u_\mu=\frac{1}{n^2}N^\mu N_\mu=1,
$$
which implies $u^\mu$ takes the form of
$$
u^\mu=\left(\sqrt{1+|u|^2},u\right).
$$
Finally, the equilibrium temperature $1/\beta$ is determined through the following nonlinear relation:
\begin{align}\label{beta}
	\frac{K_1}{K_2}(\beta)+\frac{3}{\beta}&=e.
\end{align}
For later convenience, we denote
\begin{align}\label{e tilde}
\widetilde{e}(\beta)=\frac{K_1}{K_2}(\beta)+\frac{3}{\beta}.
\end{align}
The unique solvability of the nonlinear relation \eqref{beta} will be considered in section 3.

The r.h.s of \eqref{AWRBGK1} is called a relativistic relaxation operator that satisfies
\begin{align}\label{properties}
  U_\mu\int_{\mathbb{R}^3}\left(J(F)-F\right)q^\mu\left(\begin{matrix}
		1\\
		q^\nu
	\end{matrix}\right) \,\frac{dq}{q^0}=0,\qquad
  U_\mu\int_{\mathbb{R}^3}\left(J(F)-F\right)\ln Fq^\mu\,\frac{dq}{q^0}&\le 0.
	\end{align}
The cancellation property $\eqref{properties}_1$ gives the conservation laws of total mass, momentum and energy
\begin{equation*}
\frac{d}{dt} \int_{\mathbb{T}^3 }\int_{\mathbb{R}^3}F\left(\begin{matrix}
1\\
q^\nu
\end{matrix}\right)\,dqdx=0,
\end{equation*}
and $\eqref{properties}_2$ leads to the celebrated $H$-theorem:
$$
\frac{d}{dt}\int_{\mathbb{T}^3}\int_{\mathbb{R}^3}F \ln F\,dqdx\le 0.
$$

The classical BGK model \cite{BGK} is widely used in physics and engineering to understand the transport phenomena in a more simplified and numerically amenable manner. In the relativistic case,  there are two such relaxational models for the Boltzmann equation.
The first relaxation model was introduced by Marle \cite{Mar2,Mar3}.
Then Anderson and Witting \cite{AW} suggested another model that provides a better agreement with the relativistic Boltzmann equation in terms of the viscosity and the heat conductivity in the ultra-relativistic limit. The difference between them comes from the way in which the
macroscopic fields are represented. The Marle model uses the Eckart decomposition \cite{CK,E} to represent
the macroscopic fields, whereas the Landau-Lifshitz decomposition \cite{CK,LL} is employed for the
Anderson-Witting model. The Anderson-Witting model has been fruitfully applied to a wide range of physical problems in relativistic kinetic theory \cite{AW2,DHMNS, DHMNS2,FMRS,FRS,FRS2,Jaiswal1,Jaiswal2,JRS,KP,MBHS,MKSH,MNR,MW,TZP,ZTP}. However, the existence problems for Anderson-Witting model have never been addressed, which is the main motivation of the current work.

In this paper, we establish the global in time existence of unique smooth solution and its exponential decay to the equilibration when the initial data is sufficiently close to a global equilibrium state. For this we decompose the distribution function into a global equilibrium and the perturbation around it:
\begin{equation}\label{decomposition}
F=J^0+f\sqrt{J^0}
\end{equation}
where $J^0$ is a relativistic global Maxwellian defined by
\begin{equation}\label{GM}
J^0=\frac{1}{M(\beta_0)}e^{-\beta_0 q^0}.
\end{equation}
Inserting \eqref{decomposition}, the Anderson-Witting model \eqref{AWRBGK1} is rewritten as
\begin{align}\label{AWRBGK2}
	\begin{split}
		\partial_t f+\hat{q}\cdot\nabla_x f&=L(f)+\Gamma (f),\cr
		f_0(x,q)&=f(0,x,q),
\end{split}\end{align}
where the linearized relaxation operator $L$ and the nonlinear perturbation $\Gamma$ are given in Proposition \ref{lin3}. The initial perturbation $f_0$ is determined by $F_0=J^0+f_0\sqrt{J^0}.$

To state our main result, we introduce the following notations and definitions:
\begin{itemize}
	\item We define standard $L^2$ norm by
	\[\|f\|^{2}_{L^2_q}=\int_{\mathbb{R}^{3}}|f(q)|^2\,dq,\qquad\|f\|_{L^2_{x,q}}^{2}=\int_{\mathbb{T}^{3}}\int_{\mathbb{R}^{3}}|f(x,q)|^{2}\,dqdx.
	\]
	\item We define usual $L^2$ inner product by
	\[\langle f,g\rangle_q=\int_{\mathbb{R}^3}f(q)g(q)\, dq,\qquad\langle f,g\rangle_{x,q}=\int_{\mathbb{T}^3 }\int_{\mathbb{R}^{3}}f(x,q)g(x,q)\, dqdx .
	\]
	\item Multi-index $\alpha$ and $\beta$ are defined by
	$$
	\alpha=[\alpha_{0},\alpha_{1},\alpha_{2},\alpha_{3}],\ \beta=[\beta_{1},\beta_{2},\beta_{3}]
	$$
	and
	$$
	\partial^{\alpha}_{\beta}=\partial_{t}^{\alpha_{0}}\partial_{x^{1}}^{\alpha_{1}}\partial_{x^{2}}^{\alpha_{2}}\partial_{x^{3}}^{\alpha_{3}}\partial_{q^{1}}^{\beta_{1}}\partial_{q^{2}}^{\beta_{2}}\partial_{q^{3}}^{\beta_{3}}.
	$$
	\item We use $\mathbb{P}(x,y,\cdots)$ to denote a homogeneous generic polynomial:
	$$
	\mathbb{P}(x_1,x_2,\cdots, x_n)=\sum_{i}C_{i}x_1^{a_{i,1}}x_2^{a_{i,2}}\cdots x_n^{a_{i,n}}.
	$$
where $a_{i,j}$ are sequences of nonnegative integers. By homogeneous polynomial we mean that $\mathbb{P}(0)=0$.
	\item We define the energy functional $E$ by
	\begin{align*}
		E(f)(t)&=\sum_{|\alpha|+|\beta|\le N}\|\partial^{\alpha}_{\beta}f\|^{2}_{L^2_{x,q}}.
	\end{align*}
When there's no risk of confusion, we  use $E(f)(t)$ and $E(t)$ interchangeably for brevity.
\end{itemize}
The main result of this paper is as follows:
\begin{theorem}\label{main2}
	Let $N\geq 4$. Assume $F_{0}=J^0+\sqrt{J^0}f_{0}\geq0$ and suppose $F_0$ and $J^0$ have the same total mass, momentum and energy:
	\begin{align}\label{conserv1}
		\int_{\mathbb{T}^3 }\int_{\mathbb{R}^{3}}f_0\sqrt{J^0}\left(\begin{matrix}
			1\\
			q\\
			q^0
		\end{matrix}\right)\,dqdx=0.
	\end{align}
Then, if $ E(f_0)$ is sufficiently small, there exists a unique global in time solution to \eqref{AWRBGK1} satisfying
	\begin{enumerate}
		\item The momentum distribution function is non-negative:
		$$F(t,x,q)=J^0+\sqrt{J^0}f(t,x,q)\geq0.$$
	 \item The energy functional is uniformly bounded:
		\begin{align*}
			E(f)(t)+\int_0^t E(f)(s)\,ds\leq CE\big(f_0\big).
		\end{align*}
		\item The energy functional decays exponentially fast:
		\begin{align*}
			E(f)(t)\leq Ce^{-C^\prime t}E(f_0).
		\end{align*}
		
	\end{enumerate}	
\end{theorem}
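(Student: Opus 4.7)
The plan is to carry out a Guo-type nonlinear energy method on the perturbed equation \eqref{AWRBGK2}. The first task is to extract workable formulas for the linear operator $L$ and the quadratic remainder $\Gamma$ of Proposition \ref{lin3}. Since the macroscopic fields $n$, $U^\mu$, $\beta$ enter $J(F)$ through the implicit nonlinear relations \eqref{n}--\eqref{LLD1} and \eqref{beta}, I would first use the monotonicity of $\widetilde e(\beta)$ together with the implicit function theorem to show that $F\mapsto (n,U^\mu,\beta)$ is smooth in a neighbourhood of $J^0$, with equilibrium values $(n_0,(1,0,0,0),\beta_0)$. Taylor-expanding $J(F)$ around $J^0$ then identifies $L(f)=\mathcal{K}f-\nu(q)f$, where $\mathcal{K}$ is a projection onto the span of the five collision invariants $\{\sqrt{J^0},q^i\sqrt{J^0},q^0\sqrt{J^0}\}$, and the relaxation weight $\nu(q)$---coming from the prefactor $U_\mu q^\mu/q^0$ evaluated near equilibrium---is bounded below uniformly by some $\nu_0>0$ as long as $U^\mu$ stays close to $(1,0,0,0)$.

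Next, I would establish the coercivity/dissipation structure. Microscopic coercivity $\langle -Lf,f\rangle_q\geq c\|(I-\mathcal{P})f\|_\nu^2$ is immediate from the form of $L$, where $\mathcal{P}$ denotes the $L^2_q$-projection onto the five-dimensional collision-invariant subspace. The macroscopic part $\mathcal{P}f$ is not directly damped by $L$, so I would deploy the by-now standard macroscopic estimate: test \eqref{AWRBGK2} against a hierarchy of auxiliary test functions built from moments of $\mathcal{P}f$ to recover $\|\nabla_x\mathcal{P}f\|$, and use the global conservation laws \eqref{conserv1} together with the Poincar\'e inequality on $\mathbb{T}^3$ to handle the zero modes. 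This yields the norm equivalence $E(f)\leq CD(f)$ for the dissipation functional $D(f)=\sum_{|\alpha|+|\beta|\leq N}\|(I-\mathcal{P})\partial^\alpha_\beta f\|_\nu^2+\sum_{|\alpha|\leq N-1}\|\nabla_x\mathcal{P}\partial^\alpha f\|^2$. For the nonlinear term, differentiating $\Gamma(f)$ through the smooth implicit representation of $(n,U^\mu,\beta)$ and exploiting the exponential decay of the $\sqrt{J^0}$-weights produces, after Cauchy--Schwarz,
\begin{equation*}
\bigl|\langle \partial^\alpha_\beta\Gamma(f),\partial^\alpha_\beta f\rangle_{x,q}\bigr|\leq \mathbb{P}\bigl(\sqrt{E(f)}\bigr)\,D(f),
\end{equation*}
which summed over $|\alpha|+|\beta|\leq N$ yields the master differential inequality
\begin{equation*}
\frac{d}{dt}E(f)+C_1 D(f)\leq C_2\,\mathbb{P}\bigl(\sqrt{E(f)}\bigr)\,D(f).
\end{equation*}

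Local existence with preserved non-negativity follows from the iteration
\begin{equation*}
\partial_t F^{n+1}+\hat q\cdot\nabla_x F^{n+1}+\frac{U_\mu^n q^\mu}{q^0}\,F^{n+1}=\frac{U_\mu^n q^\mu}{q^0}\,J(F^n),
\end{equation*}
whose integrating factor preserves positivity since the damping coefficient is positive, with a standard contraction in the perturbation variable giving convergence in the $E$-norm. For global existence, smallness of $E(f_0)$ together with a continuation argument keeps $C_2\,\mathbb{P}(\sqrt{E(f)})\leq C_1/2$, reducing the master inequality to $\tfrac{d}{dt}E+(C_1/2)D\leq 0$; combined with $E\leq CD$, Gronwall's lemma yields both the uniform bound and the exponential decay $E(f)(t)\leq Ce^{-C't}E(f_0)$. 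I expect the principal obstacle to lie in the first step and the nonlinear estimate that rests on it: unlike Marle's Eckart-based model, the Landau--Lifshitz four-velocity enters $U^\mu=u^\mu+\mathbf{q}^\mu/(nh)$ nonlinearly through the heat flux, so linearizing $J(F)$ and controlling all $\partial^\alpha_\beta$-derivatives of $\Gamma(f)$ requires careful bookkeeping through several layers of implicit functions while preserving the $\sqrt{J^0}$-weight structure needed to absorb all cross terms into $D(f)$.
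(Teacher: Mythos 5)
Your proposal follows essentially the same route as the paper: Taylor expansion of $J(F)$ about $J^0$ using the monotonicity of $\widetilde{e}$ to invert \eqref{beta}, identification of $L$ as (projection onto the five collision invariants) minus the identity (note the prefactor $U_\mu q^\mu/q^0$ equals $1$ at equilibrium, so your $\nu(q)$ is constant and the deviation is absorbed into $\Gamma$, exactly as in Proposition \ref{lin3}), micro--macro coercivity recovered from the moment system with the conservation laws handling the zero modes, the nonlinear estimate $\lvert\langle\partial^\alpha_\beta\Gamma(f),\partial^\alpha_\beta f\rangle\rvert\lesssim\sqrt{E}\,D$, and a standard continuation/Gronwall argument. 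You also correctly locate the real work in controlling the derivatives of the implicitly defined Landau--Lifshitz fields $U^\mu=u^\mu+\mathbf{q}^\mu/(nh)$, which is precisely what Sections 3--4 of the paper carry out.
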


The mathematical study of relativistic BGK models has just started and the literature is limited. For the Marle model, Bellouquid et al. \cite{BCNS} carried out an initial research in 2012 where the determination of equilibrium variables, asymptotic limits and linearized solution were considered.  The global existence of mild solution and its asymptotic behavior in the periodic domain is studied in \cite{BNU}. In 2018, the authors \cite{HY} studied the stationary problem in a slab. 
To the best of our knowledge, no existence results were reported for Anderson-Witting model (\ref{AWRBGK1}) so far.

The situation is far better in the case of the relativistic Boltzmann equation. The local existence and linearized solution was studied in \cite{B,D,DE}. We refer to \cite{GS1,GS2,Guo Strain Momentum,Strain1,Strain Zhu} for the global existence and asymptotic behavior in the near-equilibrium regime, and \cite{Dud3,Jiang1,Jiang2} for the existence of the general large data. For the study on the spatially homogeneous case, we refer to \cite{LR,Strain Yun}. The propagation of the uniform upper bound was recently established in \cite{JSY}. Results on the regularizing effect of the gain term can be found in \cite{A,JY,W}.
We refer to \cite{Cal,Strain2} for the Newtonian limit and \cite{SS} for the hydrodynamic limit.\newline


This paper is organized as follows: In Section 2, various useful technical lemmas are presented.
In Section 3, we study the linearization of the Anderson-Witting relaxation operator. In Section 4, we provide the estimates for the macroscopic fields and nonlinear perturbation.
The proof of the main theorem is given in Section 5.
%
%
%
%
%

	%
	%
	%
	%
	%

%
%
%
%
%
\section{Preliminaries}
We record useful results for the Lorentz transformation, and the modified Bessel function of the second kind:
\begin{align*}
	K_{i}(\beta)&=\int_{0}^{\infty}\cosh(ir)\exp\left\{-\beta \cosh(r)\right\}dr.
\end{align*}
Some of them can be found in \cite{BCNS} with or without proof. Even in the former case, we provide the proof for reader's convenience.
The following identities obtained from the use of the change of variable $y= \sinh r$ are frequently used throughout this section:
\begin{align}\label{CV}
	\begin{split}
		K_{0}(\beta)&=\int_{0}^{\infty}\frac{1}{\sqrt{1+y^{2}}}\exp\left\{-\beta
		\sqrt{1+y^{2}}\right\}\,dy,\cr
		K_{1}(\beta)&=\int_{0}^{\infty}\exp\left\{-\beta \sqrt{1+y^{2}}\right\}\,dy, \cr
		K_{2}(\beta)&=\int_{0}^{\infty}\frac{2y^{2}+1}{\sqrt{1+y^{2}}}\exp\left\{-\beta
		\sqrt{1+y^{2}}\right\}\,dy.
	\end{split}
\end{align}
The following lemma enables us to compute various quantities in the local rest frame.
\begin{lemma}\cite{Strain2}\label{rest frame}
	For $U^\mu=(\sqrt{1+|U|^2},U)$, define $\Lambda$ by
	\begin{align*}
		\Lambda=
		\begin{bmatrix}
			U^0 & -U^1 & -U^2 & -U^3 \cr
			-U^1&  1+(U^0-1)\frac{(U^1)^2}{|U|^2}&(U^0-1)\frac{U^1U^2}{|U|^2}  &(U^0-1)\frac{U^1U^3}{|U|^2}  \cr
			-U^2& (U^0-1)\frac{U^1U^2}{|U|^2} &  1+(U^0-1)\frac{(U^2)^2 }{|U|^2}&(U^0-1)\frac{U^2U^3}{|U|^2}  \cr
			-U^3&  (U^0-1)\frac{U^1U^3}{|U|^2}& (U^0-1)\frac{U^2U^3}{|U|^2} &  1+(U^0-1)\frac{(U^3)^2}{|U|^2}
		\end{bmatrix}.
	\end{align*}
Then, $\Lambda$ transforms $U^\mu$ into the local rest frame:
 $$\Lambda U^{\mu}=(1,0,0,0).$$

\end{lemma}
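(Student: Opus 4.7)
The statement is essentially an elementary linear-algebra identity for a standard Lorentz boost, so the plan is a direct component-wise verification. The only inputs needed are the on-shell relation $(U^0)^2 = 1 + |U|^2$, which follows from $U^\mu U_\mu = 1$ with signature $(+,-,-,-)$, and the Euclidean identity $\sum_{j=1}^3 (U^j)^2 = |U|^2$. I would compute $(\Lambda U)^\mu$ row by row and show that all four entries match the claimed value.

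For the time component $(\mu=0)$, reading off the first row of $\Lambda$ yields
\begin{equation*}
(\Lambda U)^0 = U^0 \cdot U^0 - \sum_{i=1}^3 U^i \cdot U^i = (U^0)^2 - |U|^2 = 1,
\end{equation*}
by the on-shell relation.

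For the spatial components $(\mu = i \in \{1,2,3\})$, I would read off the $i$-th row and group the sum over $j$:
\begin{equation*}
(\Lambda U)^i = -U^i U^0 + \sum_{j=1}^{3}\left(\delta_{ij} + (U^0 - 1)\frac{U^i U^j}{|U|^2}\right) U^j = -U^i U^0 + U^i + (U^0 - 1)\, U^i = 0,
\end{equation*}
where the middle equality uses $\sum_{j} (U^j)^2 = |U|^2$ to collapse the correction term, after which the three remaining contributions cancel.

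No real obstacle arises here; $\Lambda$ is nothing but the familiar Lorentz boost along the unit direction $\hat{U} = U/|U|$ with Lorentz factor $\gamma = U^0$, written out as a matrix, and the lemma is the defining property of that boost. The only mild caveat is the degenerate case $U = 0$, for which $U^\mu = (1,0,0,0)$ already holds and $\Lambda$ may be taken to be the identity by a continuous extension, so the formula involving $|U|^2$ in the denominator causes no real trouble in applications.
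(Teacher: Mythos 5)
Your proof is correct and is essentially identical to the paper's own argument: a direct row-by-row computation of $\Lambda U^\mu$ using $(U^0)^2=1+|U|^2$ and $\sum_j (U^j)^2=|U|^2$ (the paper likewise defers the verification that $\Lambda$ is a Lorentz transformation to the cited reference). Your remark about the degenerate case $U=0$ is a reasonable addition but not needed for the lemma as used.
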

\begin{proof}
The proof that $\Lambda$ is the Lorentz transformation can be found in \cite{Strain2}. The identity $\Lambda U=(1,0,0,0)$ can be verified by an explicit computation:
	\begin{align*}
		\Lambda U^\mu&=
		\begin{bmatrix}
			(U^0)^2-(U^1)^2-(U^2)^2-(U^3)^2\cr
			-U^0U^1+U^1+\frac{(U^0-1)U^1}{|U|^2}|U|^2\cr
			-U^0U^2+U^2+\frac{(U^0-1)U^2}{|U|^2}|U|^2\cr
			-U^0U^3+U^3+\frac{(U^0-1)U^3}{|U|^2}|U|^2
		\end{bmatrix}=\begin{bmatrix}
			\big(\sqrt{1+|U|^2}\big)^2-|U|^2\cr
			-U^0U^1+U^1+(U^0-1)U^1 \cr
			-U^0U^2+U^2+(U^0-1)U^2 \cr
			-U^0U^3+U^3+(U^0-1)U^3
		\end{bmatrix}=\begin{bmatrix}
			1\cr
			0 \cr
			0 \cr
			0
		\end{bmatrix}.
	\end{align*}
	
\end{proof}
\begin{lemma}\cite{BCNS}\label{KM}
$K_i(\beta)$ $(i=1,2)$ are related to $M(\beta)$ through the following identity:
	\begin{equation*}
		\frac{K_{1}(\beta)}{K_{2}(\beta)}+\frac{3}{\beta}=-\frac{M^{\prime}(\beta)}{M(\beta)}.
	\end{equation*}
\end{lemma}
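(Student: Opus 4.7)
My approach is to establish the closed form $M(\beta) = 4\pi K_2(\beta)/\beta$ and then pass to the logarithmic derivative. The Bessel recursions $K_{i+1}(\beta) - K_{i-1}(\beta) = 2iK_i(\beta)/\beta$ (for $i=1,2$), which follow from a one-line integration by parts applied to \eqref{CV}, will be the only auxiliary identities needed.

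First I would reduce $M(\beta)$ to a one-dimensional integral. Passing to spherical coordinates in $M(\beta) = \int_{\mathbb{R}^3} e^{-\beta\sqrt{1+|q|^2}}\,dq$ and substituting $|q| = \sinh s$, I obtain
\[
M(\beta) = 4\pi \int_0^\infty \sinh^2 s\,\cosh s\, e^{-\beta\cosh s}\,ds.
\]
Integrating by parts with $u = \sinh s$ and $dv = \sinh s\,\cosh s\, e^{-\beta\cosh s}\,ds$ (whose antiderivative $v = -\beta^{-2}(\beta\cosh s + 1)e^{-\beta\cosh s}$ is found via the substitution $w = \cosh s$) produces an integrand proportional to $\cosh^2 s\, e^{-\beta\cosh s}$ and $\cosh s\, e^{-\beta\cosh s}$; expanding $\cosh^2 s = \tfrac{1}{2}(1 + \cosh 2s)$ and invoking \eqref{CV} writes everything in terms of $K_0(\beta)$, $K_1(\beta)$, and $K_2(\beta)$. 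The recursion $K_2(\beta) - K_0(\beta) = 2K_1(\beta)/\beta$ should then collapse the expression to $M(\beta) = 4\pi K_2(\beta)/\beta$.

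Next I would take logarithmic derivatives to obtain $-M'(\beta)/M(\beta) = -K_2'(\beta)/K_2(\beta) + 1/\beta$. Differentiating the definition of $K_2$ under the integral sign (justified by the uniform exponential decay) and using the product-to-sum identity $\cosh 2s\,\cosh s = \tfrac{1}{2}(\cosh 3s + \cosh s)$ gives $K_2'(\beta) = -\tfrac{1}{2}\bigl(K_1(\beta) + K_3(\beta)\bigr)$. The recursion $K_3(\beta) - K_1(\beta) = 4K_2(\beta)/\beta$ then reduces this to $K_2'(\beta) = -K_1(\beta) - 2K_2(\beta)/\beta$, and substituting back yields $-M'(\beta)/M(\beta) = K_1(\beta)/K_2(\beta) + 3/\beta$.

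Since the computation is essentially algebraic once $M$ has been expressed through $K_2$, no analytic obstacle arises; the only mild bookkeeping is tracking the two Bessel recursions above and confirming the vanishing of the boundary terms in each integration by parts (both boundary contributions vanish, at $s=0$ from $\sinh s$ and at $s=\infty$ from the exponential factor).
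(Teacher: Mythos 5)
Your proposal is correct and follows essentially the same route as the paper: establish $M(\beta)=4\pi K_2(\beta)/\beta$, then compute $K_2'(\beta)=-K_1(\beta)-\tfrac{2}{\beta}K_2(\beta)$, and combine via logarithmic differentiation. The only cosmetic differences are that you route the derivative of $K_2$ through $K_3$ and the Bessel recursion $K_{i+1}-K_{i-1}=2iK_i/\beta$, whereas the paper obtains the same identity for $K_2'$ by a direct integration by parts in the $y=\sinh r$ variable (Lemma~\ref{relationB} and \eqref{dK_2}); both yield the same intermediate formulas.
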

\begin{proof}
Recall that $M(\beta)$ takes the form of
$$
			M(\beta)=\int_{\mathbb R^{3}}\exp
\left\{-\beta\sqrt{1+|q|^{2}}\right\}dq.
$$
Using the spherical coordinates and integration by parts, we have
	\begin{align}\label{K_2}
		\begin{split}
			M(\beta)	&=\int_{\mathbb R^{3}}\exp
			\left\{-\beta\sqrt{1+|q|^{2}}\right\}\,dq\cr
				&=4\pi\int_{0}^{\infty}y^{2}\exp \left\{-\beta\sqrt{1+y^{2}}\right\}\,dy\cr
				&=-\frac{4\pi}{\beta}\int_{0}^{\infty}y\sqrt{1+y^{2}}\frac{d}{dy}\left\{\exp
			\left\{-\beta\sqrt{1+y^{2}}\right\}\right\}\,dy\cr
				&=\frac{4\pi}{\beta}\int_{0}^{\infty}\frac{2y^{2}+1}{\sqrt{1+y^{2}}}\exp\left\{-\beta
			\sqrt{1+y^{2}}\right\}\,dy\cr
			&=\frac{4\pi}{\beta}K_{2}(\beta).
		\end{split}
	\end{align}
	On the other hand, differentiating $K_2(\beta)$ leads to
	\begin{align}\label{dK_2}
		\begin{split}
		\frac{d}{d\beta}\left\{K_{2}(\beta)\right\}&=-\int_{0}^{\infty}(2y^{2}+1)\exp\left\{-\beta \sqrt{1+y^{2}}\right\}\,dy\cr
		&=\frac{2}{\beta}\int_{0}^{\infty}y\sqrt{1+y^{2}}\frac{d}{dy}\left\{\exp\left\{-\beta \sqrt{1+y^{2}}\right\}\right\}\,dy-K_{1}(\beta)\cr
		&=-\frac{2}{\beta}\int_{0}^{\infty}\frac{2y^{2}+1}{\sqrt{1+y^{2}}}\exp\left\{-\beta \sqrt{1+y^{2}}\right\}\,dy-K_{1}(\beta)\cr
		&=-\frac{2}{\beta}K_{2}(\beta)-K_{1}(\beta),
\end{split}	\end{align}
	which, together with \eqref{K_2}, gives
	\begin{align*}
		\frac{d}{d\beta}\left\{M(\beta)\right\}&=\frac{d}{d\beta}\left\{\frac{4\pi}{\beta}K_{2}(\beta)\right\}\cr
		&=-\frac{4\pi}{\beta^{2}}K_{2}(\beta)+\frac{4\pi}{\beta}\frac{d}{d\beta}\left\{K_{2}(\beta)\right\}\cr
		&=-\frac{3}{\beta}M(\beta)-\frac{4\pi}{\beta}K_{1}(\beta).
 	\end{align*}
	Dividing the last identity by $M(\beta)$ gives the desired result.
\end{proof}
\begin{lemma}\cite{BCNS}\label{relationB}
$K_i(\beta)$ $(i=0,1,2)$ satisfy the following relation:
	$$
	K_2(\beta)=\frac{2}{\beta}K_1(\beta)+K_0(\beta).
	$$
\end{lemma}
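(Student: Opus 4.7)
The identity is a standard recurrence for the modified Bessel functions, and I would prove it by reducing both sides to the integral representations in \eqref{CV} and doing a single integration by parts. The plan is to show that the difference $K_2(\beta)-K_0(\beta)$ equals $\frac{2}{\beta}K_1(\beta)$, which is the cleanest way to avoid juggling three integrals simultaneously.

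First I would subtract the integral representations of $K_0$ and $K_2$ from \eqref{CV}. Because the integrands share a common factor of $(1+y^2)^{-1/2}e^{-\beta\sqrt{1+y^2}}$, the ``$+1$'' in the numerator of $K_2$ cancels $K_0$ exactly, leaving
\begin{equation*}
K_2(\beta)-K_0(\beta)=\int_0^\infty \frac{2y^2}{\sqrt{1+y^2}}\,e^{-\beta\sqrt{1+y^2}}\,dy
=2\int_0^\infty y\cdot\frac{y}{\sqrt{1+y^2}}\,e^{-\beta\sqrt{1+y^2}}\,dy.
\end{equation*}
The factorization on the right is set up so that the second factor is (up to the prefactor $-1/\beta$) the $y$-derivative of $e^{-\beta\sqrt{1+y^2}}$, the same observation used in \eqref{K_2} and \eqref{dK_2}.

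Next I would integrate by parts, taking $u=y$ and $dv=\frac{y}{\sqrt{1+y^2}}e^{-\beta\sqrt{1+y^2}}\,dy$, so that $v=-\frac{1}{\beta}e^{-\beta\sqrt{1+y^2}}$. The boundary term vanishes at both endpoints (the exponential kills the linear growth at infinity, and $u=0$ at $y=0$), and what remains is
\begin{equation*}
2\int_0^\infty y\cdot\frac{y}{\sqrt{1+y^2}}\,e^{-\beta\sqrt{1+y^2}}\,dy
=\frac{2}{\beta}\int_0^\infty e^{-\beta\sqrt{1+y^2}}\,dy
=\frac{2}{\beta}K_1(\beta),
\end{equation*}
where the last equality uses the representation of $K_1$ from \eqref{CV}. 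Rearranging gives the claim.

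There is no real obstacle here; the only thing to be careful about is the direction of the integration by parts, since the opposite choice of $u$ and $dv$ produces a more complicated integrand. Alternatively, one can derive the same identity directly from the hyperbolic-function definition of $K_i$ using $\cosh(2r)=1+2\sinh^2 r$ and the same integration by parts with $v=-\frac{1}{\beta}e^{-\beta\cosh r}$, but the $y$-substitution version above is shorter and reuses the same manipulations already performed in the proof of Lemma \ref{KM}.
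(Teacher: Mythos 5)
Your proof is correct and is essentially identical to the paper's: both isolate the $\frac{2y^2}{\sqrt{1+y^2}}e^{-\beta\sqrt{1+y^2}}$ piece (you by subtracting $K_0$ from $K_2$, the paper by splitting $K_2$, which is the same thing) and then perform the same single integration by parts with $v=-\frac{1}{\beta}e^{-\beta\sqrt{1+y^2}}$ to land on $\frac{2}{\beta}K_1(\beta)$. No substantive difference.
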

\begin{proof}
	
	It is straightforward from \eqref{CV} that
\begin{align*}	
			K_{2}(\beta)&=\int_{0}^{\infty}\frac{2y^{2}+1}{\sqrt{1+y^{2}}}\exp\left\{-\beta \sqrt{1+y^{2}}\right\}\,dy\cr
			&=\int_{0}^{\infty}\frac{2y^{2}}{\sqrt{1+y^{2}}}\exp\left\{-\beta \sqrt{1+y^{2}}\right\}\,dy+K_{0}(\beta).
			\end{align*}
Then, from simple integration by parts, the first term on r.h.s can be expressed as
\begin{align*}
	\int_{0}^{\infty}\frac{2y^{2}}{\sqrt{1+y^{2}}}\exp\left\{-\beta \sqrt{1+y^{2}}\right\}\,dy			&=\int_{0}^{\infty}-\frac{2y}{\beta}\frac{d}{dy}\left\{\exp\left\{-\beta \sqrt{1+y^{2}}\right\}\right\}\,dy \cr
	&=\frac{2}{\beta}\int_{0}^{\infty}\exp\left\{-\beta \sqrt{1+y^{2}}\right\}\,dy \cr
	&=\frac{2}{\beta}K_{1}(\beta),
\end{align*}
which gives the desired result.
	
\end{proof}


\begin{lemma}\cite{BCNS}\label{Bessel2}
The following identity holds for $K_i(\beta)$ $(i=1,2)$
	$$
	\left(\frac{K_{1}(\beta)}{K_{2}(\beta)} \right)^{\prime}=\frac{3}{\beta}\frac{K_{1}(\beta)}{K_{2}(\beta)}+\left(\frac{K_{1}(\beta)}{K_{2}(\beta)} \right)^{2}-1.
	$$
\end{lemma}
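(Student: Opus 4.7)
The plan is to derive the ODE for $R(\beta) := K_1(\beta)/K_2(\beta)$ by direct differentiation, using only the integral representations \eqref{CV}, the formula for $K_2'$ that already appears inside the proof of Lemma \ref{KM}, and the three-term relation of Lemma \ref{relationB}. Applying the quotient rule gives
\[
R'(\beta) = \frac{K_1'(\beta)}{K_2(\beta)} - R(\beta)\,\frac{K_2'(\beta)}{K_2(\beta)},
\]
so everything reduces to writing $K_1'/K_2$ and $K_2'/K_2$ as affine functions of $R$ with coefficients in $\beta$ and constants.

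For $K_2'/K_2$, I would simply recall the identity $K_2'(\beta) = -\tfrac{2}{\beta}K_2(\beta) - K_1(\beta)$ established in \eqref{dK_2} above, which immediately yields $K_2'/K_2 = -\tfrac{2}{\beta}-R$. For $K_1'$ I would differentiate under the integral in \eqref{CV} to obtain
\[
K_1'(\beta) = -\int_0^\infty \sqrt{1+y^2}\,e^{-\beta\sqrt{1+y^2}}\,dy,
\]
and then split $\sqrt{1+y^2} = \tfrac{1}{2}\bigl(\tfrac{2y^2+1}{\sqrt{1+y^2}} + \tfrac{1}{\sqrt{1+y^2}}\bigr)$ so that the integral recognizes as $\tfrac{1}{2}(K_2+K_0)$. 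This is the one small algebraic trick, and it is the analogue of the standard Bessel identity $K_\nu' = -\tfrac{1}{2}(K_{\nu-1}+K_{\nu+1})$.

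At this point Lemma \ref{relationB} eliminates $K_0$ in favor of $K_1$ and $K_2$: $K_0 = K_2 - \tfrac{2}{\beta}K_1$, giving $K_1' = -K_2 + \tfrac{1}{\beta}K_1$ and hence $K_1'/K_2 = -1 + \tfrac{1}{\beta}R$. Substituting both ratios into the quotient-rule expression yields
\[
R'(\beta) = \Bigl(-1 + \tfrac{1}{\beta}R\Bigr) - R\Bigl(-\tfrac{2}{\beta} - R\Bigr) = R^2 + \tfrac{3}{\beta}R - 1,
\]
which is exactly the claimed identity.

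This argument is essentially mechanical once the representation $K_1' = -\tfrac{1}{2}(K_0+K_2)$ is in hand, so I do not anticipate a genuine obstacle; the only point requiring care is the bookkeeping of the $1/\sqrt{1+y^2}$ and $(2y^2+1)/\sqrt{1+y^2}$ pieces so that the right Bessel functions are produced, and the consistent use of Lemma \ref{relationB} to get a closed formula in $R$ and $\beta$ alone.
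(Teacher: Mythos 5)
Your proposal is correct and follows essentially the same route as the paper: both rest on the identity $K_1'=-\tfrac12(K_0+K_2)$ obtained from the same splitting of $\sqrt{1+y^2}$, the formula $K_2'=-\tfrac{2}{\beta}K_2-K_1$ from \eqref{dK_2}, and Lemma \ref{relationB} to eliminate $K_0$. The only difference is cosmetic bookkeeping — you apply the quotient rule with $K_0$ eliminated at the outset, whereas the paper computes the Wronskian $K_1'K_2-K_1K_2'$ keeping $K_0$ in the intermediate steps before converting back.
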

\begin{proof}
	Differentiating $K_1(\beta)$ with respect to $\beta$, we have from \eqref{CV} that
	\begin{align}\label{K_1}
		\begin{split}
			\frac{d}{d\beta}	\left\{K_{1}(\beta)\right\}&=-\int_{0}^{\infty}\sqrt{1+y^{2}}\exp\left\{-\beta \sqrt{1+y^{2}}\right\}\,dy\cr
		&=-\frac{1}{2}\int_{0}^{\infty}\left(\frac{2y^2+1}{\sqrt{1+y^{2}}}+\frac{1}{\sqrt{1+y^{2}}}\right)\exp\left\{-\beta \sqrt{1+y^{2}}\right\}\,dy\cr
		&=-\frac{1}{2}\left(K_{2}(\beta)+K_{0}(\beta)\right).
\end{split}	
\end{align}
	We then recall from Lemma \ref{relationB} that
$$
	K_2(\beta)=\frac{2}{\beta}K_1(\beta)+K_0(\beta)
$$
to express the last identity in r.h.s of \eqref{K_1} as
$$
-\frac{1}{2}\left(K_{2}(\beta)+K_{0}(\beta)\right)=-\frac{1}{\beta}K_{1}(\beta)-K_{0}(\beta)
$$
so that
\begin{equation}\label{K_1-1}
\frac{d}{d\beta}	\left\{K_{1}(\beta)\right\}=-\frac{1}{\beta}K_{1}(\beta)-K_{0}(\beta).
\end{equation}
In the same manner, we have from \eqref{dK_2} that
	\begin{align}\label{K_1-2}
		\begin{split}
		\frac{d}{d\beta}	\left\{K_{2}(\beta)\right\}
		&=-\frac{2}{\beta}K_{2}(\beta)-K_{1}(\beta)\cr
		&=\left(-\frac{4}{\beta^{2}}-1\right)K_{1}(\beta)-\frac{2}{\beta}K_{0}(\beta).
	\end{split}\end{align}
Combining \eqref{K_1-1} and \eqref{K_1-2}, we have
	\begin{align*}
		&	\frac{d}{d\beta}	\left\{K_{1}(\beta)\right\}K_{2}(\beta)-K_{1}(\beta)	\frac{d}{d\beta}	\left\{K_{2}(\beta)\right\}\cr
		&=	-\frac{1}{\beta}K_{1}(\beta)K_{0}(\beta)-\left(K_{0}(\beta)\right)^{2}+\left(\frac{2}{\beta^{2}}+1\right)\left(K_{1}(\beta)\right)^{2}\cr
		&=-\frac{1}{\beta}K_{1}(\beta)\left(K_2(\beta)-\frac{2}{\beta}K_1(\beta)\right)-\left(K_2(\beta)-\frac{2}{\beta}K_1(\beta)\right)^{2}+\left(\frac{2}{\beta^{2}}+1\right)\left(K_{1}(\beta)\right)^{2}\cr
		&=\frac{3}{\beta}K_{1}(\beta)K_{2}(\beta)+(K_{1}(\beta))^{2}-(K_{2}(\beta))^{2}.
	\end{align*}
We divide the both sides by $(K_2(\beta))^2$ to obtain the desired result.
\end{proof}

	\section{Linearization}	
In this section, we study the linearization of Anderson-Witting model \eqref{AWRBGK1} around the global relativistic Maxwellian \eqref{GM}.
\subsection{Unique determination of $\beta$} Before we linearize the Anderson-Witting model \eqref{AWRBGK1}, we need to resolve the question raised in introduction, namely, that the nonlinear relation \eqref{beta} uniquely determine $\beta$.
First, we need to prove the following monotonicity result.
\begin{lemma}\label{beta decreasing}
The function $\widetilde{e}(\beta)$ defined in (\ref{e tilde}), satisfies the following properties:
\begin{enumerate}
\item  $\widetilde{e}(\beta)$ is strictly decreasing on $0<\beta<\infty$.
\item  $\widetilde{e}(\beta)>1$ on $0<\beta<\infty$.
\end{enumerate}
\end{lemma}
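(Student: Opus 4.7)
The plan is to bypass direct manipulation of the ratio $K_1/K_2$ and instead route the argument through the partition function $M(\beta)$, since Lemma \ref{KM} rewrites
$$
\widetilde{e}(\beta) = -\frac{M'(\beta)}{M(\beta)}.
$$
Both conclusions then follow from elementary properties of the rest-frame probability measure $d\mu_\beta(q) = M(\beta)^{-1} e^{-\beta q^0}\,dq$ on $\mathbb{R}^3$, where $q^0 = \sqrt{1+|q|^2}$.

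For part (2), I would differentiate under the integral sign in the definition of $M(\beta)$ to obtain $-M'(\beta) = \int_{\mathbb{R}^3} q^0 e^{-\beta q^0}\,dq$, which gives
$$
\widetilde{e}(\beta) = \int_{\mathbb{R}^3} q^0 \, d\mu_\beta(q).
$$
Since $q^0 \geq 1$ with equality only on the Lebesgue-null set $\{q = 0\}$ and $d\mu_\beta$ has a strictly positive density, the strict inequality $\widetilde{e}(\beta) > 1$ is immediate.

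For part (1), I would differentiate $\widetilde{e} = -M'/M$ once more in $\beta$ to get
$$
\widetilde{e}'(\beta) = -\frac{M''(\beta)\, M(\beta) - (M'(\beta))^2}{M(\beta)^2},
$$
together with the companion formula $M''(\beta) = \int (q^0)^2 e^{-\beta q^0}\,dq$. The Cauchy--Schwarz inequality applied to the splitting $q^0 e^{-\beta q^0} = \bigl(q^0 e^{-\beta q^0/2}\bigr)\bigl(e^{-\beta q^0/2}\bigr)$ yields
$$
(M'(\beta))^2 \leq M''(\beta)\, M(\beta),
$$
with strict inequality because $q^0$ is not $\mu_\beta$-almost everywhere constant; hence $\widetilde{e}'(\beta) < 0$ throughout $(0,\infty)$.

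There is no real obstacle once one adopts this reduction; the main decision is to avoid wrestling with Lemma \ref{Bessel2}, which would produce the expression $\widetilde{e}'(\beta) = (K_1/K_2)^2 + (3/\beta)(K_1/K_2) - 1 - 3/\beta^2$ whose sign is not manifest from the algebraic form. Casting $\widetilde{e}$ as the first moment of the Maxwell--J\"uttner density converts both statements into standard inequalities: positivity of an integrand with positive weight for (2), and a variance-type Cauchy--Schwarz bound for (1).
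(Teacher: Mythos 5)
Your proof is correct, and it takes a genuinely different route from the paper. The paper proves monotonicity via the Bessel identity of Lemma \ref{Bessel2}, writing $\widetilde{e}\,'(\beta)=\frac{3}{\beta}\frac{K_1}{K_2}+(\frac{K_1}{K_2})^2-1-\frac{3}{\beta^2}$ and then splitting into the regimes $\beta\in(0,1)$ (where $0<K_1/K_2<1$ suffices) and $\beta\in[1,\infty)$ (where it derives explicit rational-function bounds on $K_1/K_0$ and $K_2/K_1$ from Taylor bounds on $1/\sqrt{1+z^2}$); for the lower bound it again splits, using $3/\beta>1$ on $(0,2)$ and quoting a polynomial inequality from the appendix of \cite{BCNS} on $[2,\infty)$. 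You instead invoke Lemma \ref{KM} to write $\widetilde{e}=-M'/M$ and observe that $\widetilde{e}(\beta)$ is the first moment of $q^0$ under the normalized J\"uttner measure while $-\widetilde{e}\,'(\beta)=(M''M-(M')^2)/M^2$ is its variance, so (2) follows from $q^0\geq 1$ with equality only on a null set and (1) from the strict Cauchy--Schwarz inequality (equivalently, log-convexity of $M$). This is consistent with Lemma \ref{J^0}, which already records $\int_{\mathbb{R}^3}q^0J^0\,dq=e_0$. Your argument is shorter, uniform in $\beta$ (no case splitting), and self-contained in that it does not import the \cite{BCNS} appendix inequality; the only point worth making explicit is the routine justification of differentiating $M$ twice under the integral sign, which follows by dominated convergence on compact subintervals of $(0,\infty)$. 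What the paper's computation buys in exchange is explicit quantitative bounds on the Bessel ratios, though these are not reused elsewhere in the paper.
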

	\begin{proof}
\noindent$(1)$ Strict monotonicity: For $\beta\in (0,1)$, it follows from Lemma \ref{Bessel2} that
		\begin{align*}
			\biggl(\frac{K_{1}(\beta)}{K_{2}(\beta)}  +\frac{3}{\beta}\biggl)^{\prime}&=\frac{3}{\beta}\frac{K_1(\beta)}{K_2(\beta)} +\left(\frac{K_1(\beta)}{K_2(\beta)} \right)^2-1-\frac{3}{\beta^2}\cr
			&\le \frac{3}{\beta}\left(1-\frac{1}{\beta}\right)\cr
			&< 0.
		\end{align*}
		Here we used the fact that
		$$
		0<\frac{K_1(\beta)}{K_2(\beta)}=\frac{\int_{0}^{\infty}\exp\left\{-\beta \sqrt{1+y^{2}}\right\}\,dy}{\int_{0}^{\infty}\frac{2y^{2}+1}{\sqrt{1+y^{2}}}\exp\left\{-\beta
			\sqrt{1+y^{2}}\right\}\,dy}<1.
		$$
		 For $\beta\in [1,\infty)$, we use the change of variable $z=\sinh (r/2)$ to see that
		$$
		K_0(\beta)+K_1(\beta)=\int_{0}^{\infty}\left(1+\cosh(r)\right)e^{-\beta
			\cosh(r)}dr=4e^{-\beta}\int_0^\infty \frac{1+z^2}{\sqrt{1+z^2}}e^{-2\beta z^2}\, dz.
		$$
		This together with
		$$
		\frac{1}{\sqrt{1+z^2}}\le 1-\frac{z^2}{2}+\frac{3}{8}z^4,\qquad \int_{0}^{\infty}e^{-2\beta z^{2}}\,dz=\sqrt{\frac{\pi}{8\beta}},
		$$
		leads to
		\begin{align*}
			K_0(\beta)+K_1(\beta)&=4e^{-\beta}\int_0^\infty \frac{1+z^2}{\sqrt{1+z^2}}e^{-2\beta z^2}\, dz\cr
			&\le \frac{1}{2}e^{-\beta}\int_0^\infty \left(8+4z^2-z^4+3z^6 \right)e^{-2\beta z^2}\,dz\cr
			&=\frac{1}{2}e^{-\beta}\left(8+\frac{1}{\beta}-\frac{3}{16\beta^2}+\frac{45}{64\beta^3} \right)\sqrt{\frac{\pi}{8\beta}}.
		\end{align*}
		On the other hand, it follows from
		$$
		\frac{1}{\sqrt{1+z^2}}\ge 1-\frac{z^2}{2}
		$$
		that
		\begin{align*}
			K_0(\beta)&=2e^{-\beta}\int_0^\infty \frac{1}{\sqrt{1+z^2}}e^{-2\beta z^2}\,dz\cr
			&\ge 2e^{-\beta}\int_0^\infty \left( 1-\frac{z^2}{2} \right)e^{-2\beta z^2}\,dz\cr
			&=2e^{-\beta}\left(1-\frac{1}{8\beta} \right)\sqrt{\frac{\pi}{8\beta}}.
		\end{align*}
		Combining these estimates, we have
		$$
		\frac{K_0(\beta)+K_1(\beta)}{K_0(\beta)}\le \frac{512\beta^3+64\beta^2-12\beta+45}{256\beta^3-32\beta^2},
		$$
		which implies
		$$
		\frac{K_1(\beta)}{K_0(\beta)}\le \frac{256\beta^3+96\beta^2-12\beta+45}{256\beta^3-32\beta^2}.
		$$
		Also, using Lemma \ref{relationB} gives
		$$
		\frac{K_2(\beta)}{K_1(\beta)}\ge \frac{2}{\beta}+\frac{256\beta^3-32\beta^2}{256\beta^3+96\beta^2-12\beta+45}=\frac{256\beta^4+480\beta^3+192\beta^2-24\beta+90}{256\beta^4+96\beta^3-12\beta^2+45\beta}.
		$$
		Therefore we have from Lemma \ref{Bessel2} that
		\begin{align*}
			\biggl(\frac{K_{1}(\beta)}{K_{2}(\beta)} +\frac{3}{\beta}\biggl)^{\prime}&=\frac{3}{\beta}\frac{K_1}{K_2}(\beta)+\left(\frac{K_1}{K_2}(\beta)\right)^2-1-\frac{3}{\beta^2}\cr
			&\le \frac{3}{\beta}\frac{256\beta^4+96\beta^3-12\beta^2+45\beta}{256\beta^4+480\beta^3+192\beta^2-24\beta+90}\cr
			&+\left(\frac{256\beta^4+96\beta^3-12\beta^2+45\beta}{256\beta^4+480\beta^3+192\beta^2-24\beta+90}\right)^2-1-\frac{3}{\beta^2}\cr
			&< 0,
		\end{align*}
		which completes the proof of $(1)$.\newline

\noindent $(2)$ If $\beta\in (0,2)$, the desired result follows easily from the positivity of $K_i$:
		$$
		\widetilde{e}(\beta)=\frac{K_1(\beta)}{K_2(\beta)}+\frac{3}{\beta}\geq\frac{3}{\beta}>1.
		$$
		For the case $\beta\in [2,\infty)$ We recall the following inequality from \cite[Appendix]{BCNS}:
		\begin{align*}
			\frac{K_1(\beta)}{K_2(\beta)} &\ge \frac{128\beta^3+48\beta^2-33\beta}{128\beta^3+240\beta^2+105\beta-66}\cr
			&\ge 1+\frac{-192\beta^2-138\beta+66}{128\beta^3+240\beta^2+105\beta-66},
		\end{align*}
		which yields
		\begin{align*}
			  \frac{K_1(\beta)}{K_2(\beta)} +\frac{3}{\beta}&\ge 1+\frac{192\beta^3+582\beta^2+381\beta-198}{128\beta^4+240\beta^3+105\beta^2-66\beta}\cr
			&> 1.
		\end{align*}
	\end{proof}	
In the following proposition, we show that (\ref{beta}) admits a unique solution, at least, when the solution is sufficiently close to equilibrium.
\begin{proposition}\label{beta decreasing2}
Suppose $E(f)(t)$ is sufficiently small. Then \eqref{beta} uniquely determines $\beta$. Therefore we can write
$$
\beta=(\widetilde{e})^{-1}(e).
$$
\end{proposition}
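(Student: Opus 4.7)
The plan is to combine the monotonicity already established in Lemma \ref{beta decreasing} with a range computation to show that $\widetilde{e}\colon(0,\infty)\to(1,\infty)$ is a continuous bijection, and then to verify that the energy per particle $e$ associated with $F=J^0+\sqrt{J^0}f$ lies inside this range whenever $E(f)(t)$ is small. Given such bijectivity, the unique solvability of \eqref{beta} and the formula $\beta=(\widetilde{e})^{-1}(e)$ are immediate.

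First, from the integral representations (\ref{CV}) and dominated convergence, $\widetilde{e}(\beta)$ is continuous in $\beta$. Lemma \ref{beta decreasing}(1) provides strict decrease, and the two endpoint limits
\begin{equation*}
\lim_{\beta\to 0^+}\widetilde{e}(\beta)=+\infty,\qquad \lim_{\beta\to\infty}\widetilde{e}(\beta)=1
\end{equation*}
follow respectively from the trivial bound $\widetilde{e}(\beta)\ge 3/\beta$ and from the classical asymptotic $K_i(\beta)\sim\sqrt{\pi/(2\beta)}\,e^{-\beta}$, which gives $K_1(\beta)/K_2(\beta)\to 1$ while $3/\beta\to 0$. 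Hence $\widetilde{e}$ is a strictly decreasing continuous bijection from $(0,\infty)$ onto $(1,\infty)$, and its inverse $(\widetilde{e})^{-1}$ is well-defined and continuous on $(1,\infty)$.

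Next, I would verify that $e>1$ whenever $E(f)(t)$ is sufficiently small. At the reference state $f\equiv 0$, we have $F=J^0$, and a direct computation using the symmetry of $J^0$ in $q$ gives $N^\mu=(1,0,0,0)$, $n=1$, $u^\mu=(1,0,0,0)$, and therefore
\begin{equation*}
e=\int_{\mathbb{R}^3} J^0 q^0\,dq=-\frac{M^\prime(\beta_0)}{M(\beta_0)}=\widetilde{e}(\beta_0)
\end{equation*}
by Lemma \ref{KM}. Since $\widetilde{e}(\beta_0)>1$ by Lemma \ref{beta decreasing}(2), it suffices to show that $|e-\widetilde{e}(\beta_0)|$ can be made arbitrarily small by shrinking $E(f)$. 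This reduces to a perturbative moment estimate: the Sobolev embedding $H^N(\mathbb{T}^3)\hookrightarrow L^\infty$ (valid for $N\ge 4$) together with the rapid decay of $\sqrt{J^0}$ in $q$ provides pointwise control of the corrections to $N^\mu$, $T^{\mu\nu}$, and consequently to $n$, $u^\mu$, and $e$.

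The main obstacle is this last step, because $e$ depends on $f$ in a rather involved nonlinear way through $u^\mu=N^\mu/n$ with $n^2=N^\mu N_\mu$. One must first secure a strictly positive lower bound $n\ge 1/2$ (say) so that $u^\mu$ is well-defined and stays close to $(1,0,0,0)$, and only then propagate the smallness of $f$ through the formulas in \eqref{LLD1}. These macroscopic-field estimates are precisely what Section 4 of the paper develops, so I would invoke those bounds rather than reprove them here.
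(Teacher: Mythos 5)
Your proposal is correct and follows essentially the same route as the paper: strict monotonicity and the lower bound $\widetilde{e}>1$ from Lemma \ref{beta decreasing}, combined with the macroscopic-field estimate of Lemma \ref{lem2} to guarantee $e\ge e_0-\sqrt{E(f)(t)}>1$, hence $e$ lies in the range of $\widetilde{e}$. The only difference is that you make explicit the endpoint limits $\widetilde{e}(0^+)=+\infty$ and $\widetilde{e}(\infty)=1$ (so that the range is exactly $(1,\infty)$), a point the paper leaves implicit.
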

\begin{proof}
Recall from \eqref{beta} that $\beta$ is determined by the nonlinear relation:
$$
	\frac{K_1(\beta)}{K_2(\beta)} +\frac{3}{\beta}=e,
$$
where $e$ is given in \eqref{LLD1}. First we denote $e_0$ by
$$
e_0=\frac{K_1(\beta_0)}{K_2(\beta_0)}+\frac{3}{\beta_0},
$$
and observe  from Lemma \ref{beta decreasing} that $e_0>1$. Then, Lemma $\ref{lem2}$ $(1)$ implies that
\begin{equation*}
e\ge e_0-\sqrt{E(f)(t)}>1,
\end{equation*}
when $E(f)(t)$ is sufficiently small.
We mention that the estimate in Lemma \ref{lem2} depends only on the moment estimate of $f$, and it's free from circular argument.
Therefore we can conclude that $e$ lies in the range of $\widetilde{e}(\beta)$. Then the strict monotonicity of $\widetilde{e}(\beta)$ which is proved in Lemma \ref{beta decreasing} gives the desired result.
 \end{proof}

\subsection{Linearization of Anderson-Witting model}
We start with the linearization of the relativistic Maxwellian $J(F)$.
We first provide the following lemma which is frequently used throughout this paper.
\begin{lemma}\label{J^0}
	$J^0$ satisfies
	$$
	\int_{\mathbb{R}^3}\left(1,~q,~q^0,~(q^i)^2,~qq^0,~(q^0)^2\right) J^0\,\frac{dq}{q^0}=\left(\frac{K_1(\beta_0)}{K_2(\beta_0)},~0~,1~,\frac{1}{\beta_0},~0,~\frac{K_1(\beta_0)}{K_2(\beta_0)}+\frac{3}{\beta_0}\right).
	$$
\end{lemma}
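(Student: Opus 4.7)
The proof plan is to compute each of the six integrals separately, exploiting the explicit form $J^0 = e^{-\beta_0 q^0}/M(\beta_0)$ with $M(\beta_0)=\frac{4\pi}{\beta_0}K_2(\beta_0)$ (established in \eqref{K_2}), together with spherical coordinates in $q$. Two of the six integrals, namely the $q$ and $qq^0$ moments, vanish immediately by odd symmetry of the integrand in $q$, since $J^0$ depends on $q$ only through $|q|$. A third entry, $\int_{\mathbb{R}^3} q^0 J^0\,dq/q^0 = \int_{\mathbb{R}^3} J^0\,dq = 1$, reduces to the normalization of $J^0$ (which is the defining property of $M(\beta_0)$). So only three substantive computations remain.

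For $\int_{\mathbb{R}^3} J^0 \,dq/q^0$, I would pass to spherical coordinates and write it as $\frac{4\pi}{M(\beta_0)}\int_0^\infty \frac{y^2}{\sqrt{1+y^2}} e^{-\beta_0\sqrt{1+y^2}}\,dy$. The integral here is essentially the one that appeared inside the proof of Lemma \ref{relationB}: an integration by parts against $\frac{d}{dy}\{e^{-\beta_0\sqrt{1+y^2}}\} = -\frac{\beta_0 y}{\sqrt{1+y^2}} e^{-\beta_0\sqrt{1+y^2}}$ converts it into $\frac{1}{\beta_0}K_1(\beta_0)$. Combined with $M(\beta_0)=\frac{4\pi}{\beta_0}K_2(\beta_0)$, this yields the ratio $K_1(\beta_0)/K_2(\beta_0)$.

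For $\int_{\mathbb{R}^3}(q^i)^2 J^0\,dq/q^0$, rotational symmetry of $J^0$ reduces it to $\frac{1}{3}\int_{\mathbb{R}^3} |q|^2 J^0\,dq/q^0$. In spherical coordinates this is $\frac{4\pi}{3 M(\beta_0)}\int_0^\infty \frac{y^4}{\sqrt{1+y^2}}e^{-\beta_0\sqrt{1+y^2}}\,dy$. Writing the integrand as $y^3 \cdot \frac{y}{\sqrt{1+y^2}} e^{-\beta_0\sqrt{1+y^2}} = -\frac{y^3}{\beta_0}\frac{d}{dy}\{e^{-\beta_0\sqrt{1+y^2}}\}$ and integrating by parts produces $\frac{3}{\beta_0}\int_0^\infty y^2 e^{-\beta_0\sqrt{1+y^2}}\,dy = \frac{3}{\beta_0}\cdot \frac{M(\beta_0)}{4\pi}$, giving the desired value $1/\beta_0$.

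Finally, for $\int_{\mathbb{R}^3}(q^0)^2 J^0\,dq/q^0$, I simply use $(q^0)^2 = 1+|q|^2$ to split it as $\int J^0\,dq/q^0 + \int |q|^2 J^0\,dq/q^0$; the first piece was just computed to be $K_1(\beta_0)/K_2(\beta_0)$ and the second, from the previous step, equals $3/\beta_0$, yielding $K_1(\beta_0)/K_2(\beta_0)+3/\beta_0$. No step presents a real obstacle: the only recurring technique is an integration by parts pairing $\tfrac{y}{\sqrt{1+y^2}}$ with the exponential, which is already the workhorse of the preliminary lemmas.
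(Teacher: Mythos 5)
Your proposal is correct and follows essentially the same route as the paper: spherical coordinates, integration by parts against $\frac{d}{dy}\{e^{-\beta_0\sqrt{1+y^2}}\}$, and the identity $M(\beta_0)=\frac{4\pi}{\beta_0}K_2(\beta_0)$. The only (harmless) variation is that you obtain the $(q^0)^2$ moment by splitting $(q^0)^2=1+|q|^2$ and reusing the earlier entries, whereas the paper computes $\int q^0 J^0\,dq$ directly; both give $\frac{K_1(\beta_0)}{K_2(\beta_0)}+\frac{3}{\beta_0}$.
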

\begin{proof}
We have from the spherical coordinates and integration by parts
\begin{align*}
	\int_{\mathbb{R}^3}J^0\,\frac{dq}{q^0}&=\frac{4\pi}{M(\beta_0)}\int_0^\infty  \frac{y^2}{\sqrt{1+y^2}}e^{-\beta_0\sqrt{1+y^2}}\,dy \cr
	&=	\frac{4\pi}{\beta_0M(\beta_0)}\int_0^\infty e^{-\beta_0\sqrt{1+y^2}}\,dy,
\end{align*}
which, together with $\eqref{CV}_2$ and \eqref{K_2}, gives
$$
	\int_{\mathbb{R}^3}J^0\,\frac{dq}{q^0}=\frac{K_1(\beta_0)}{K_2(\beta_0)}.
$$
In the same manner, we find
\begin{align*}
	\int_{\mathbb{R}^3}q^0J^0\,dq&=	\frac{1}{M(\beta_0)}\int_{\mathbb{R}^3}\sqrt{1+|q|^2}e^{-\beta_0 \sqrt{1+|q|^2}}\,dq\cr
&=\frac{4\pi}{\beta_0 M(\beta_0)}	\left(\int_0^\infty  e^{-\beta_0 \sqrt{1+y^2}}\,dy+\frac{3}{\beta_0}\int_0^\infty \frac{2y^2+1}{\sqrt{1+y^2}}e^{-\beta_0 \sqrt{1+y^2}}\,dy\right)\cr
	&=\frac{K_1(\beta_0)}{K_2(\beta_0)}+\frac{3}{\beta_0}.
\end{align*}
On the other hand, by the spherical symmetry, we see that
\begin{align*}
	\int_{\mathbb{R}^3}(q^i)^2J^0\,\frac{dq}{q^0}&=	\frac{1}{M(\beta_0)}\int_{\mathbb{R}^3}\frac{1}{3}\frac{|q|^2}{\sqrt{1+|q|^2}}e^{-\beta_0\sqrt{1+|q|^2}}\,dq,
\end{align*}
so that
\begin{align*}
	\int_{\mathbb{R}^3}(q^i)^2J^0\,\frac{dq}{q^0}&=	\frac{4\pi}{3M(\beta_0)}\int_0^\infty\frac{y^4}{\sqrt{1+y^2}}e^{-\beta_0\sqrt{1+y^2}}\,dy\cr
	&=\frac{4\pi}{\beta_0M(\beta_0)}\int_0^\infty y^2e^{-\beta_0\sqrt{1+y^2}}\,dy\cr
	&=\frac{1}{\beta_0}.
\end{align*}
Finally, it is straightforward that
	$$
	\int_{\mathbb{R}^3}\left(q,q^0,qq^0\right) J^0\,\frac{dq}{q^0}=\left(0,1,0\right).
	$$

\end{proof}

We now linearize the relativistic Maxwellian $J(F)$.
 \begin{lemma}\label{lin2}
 Suppose $E(f)(t)$ is sufficiently small. Then, for $F=J^0+ f\sqrt{J^0}$, we have
 	$$
 	\frac{J(F)-J^0}{\sqrt{J^0}}=P(f)+\sum_{i=1}^5\Gamma_i (f).
 	$$
 $\bullet$	The projection operator $P$ is given by
 	\begin{align*}
 		P(f)&=\left(\int_{\mathbb{R}^3} f\sqrt{J^0}\,dq\right)\sqrt{J^0}+\frac{\beta_0}{\widetilde{h}(\beta_0)}\left(\int_{\mathbb{R}^3} qf\sqrt{J^0}\,dq\right)\cdot q\sqrt{J^0}\cr
 		 		&-\frac{1}{\left\{\widetilde{e}\right\}^{\prime}(\beta_0)}\left(\int_{\mathbb{R}^3} \left(q^0-e_0\right)f\sqrt{J^0}\,dq\right)\left(q^0-e_0\right)\sqrt{J^0},
 		\end{align*}
where $\widetilde{h}(\beta)$ denotes
$$
\widetilde{h}(\beta)=\frac{K_1(\beta)}{K_2(\beta)} +\frac{4}{\beta}.
$$
$\bullet$ The nonlinear perturbations $\Gamma_i(f)$ $(i=1,\cdots, 5)$ are given by
 	\begin{align*}
\Gamma_1 (f)&=\left(\frac{\Psi_1}{2}-\frac{\Psi^2}{2(2+\Psi+2\sqrt{1+\Psi})}\right)\sqrt{J^0},\cr
 	 \Gamma_2 (f)&=\biggl\{\left(\frac{\Psi_1}{2}+\frac{\Psi^3-3\Psi^2}{2(2+\Psi-\Psi^2+2\sqrt{1+\Psi})}\right) \left(e_0 +\int_{\mathbb{R}^3} q^0f\sqrt{J^0}\,dq   +\int_{\mathbb{R}^3} \left(2q^0\Phi+\Phi^2\right)F\,\frac{dq}{q^0}\right)\cr
 	 &+\int_{\mathbb{R}^3} f\sqrt{J^0}\,dq \int_{\mathbb{R}^3} q^0f\sqrt{J^0}\,dq+ \left(1+\int_{\mathbb{R}^3} f\sqrt{J^0}\,dq\right) \int_{\mathbb{R}^3} \left(2q^0\Phi+\Phi^2\right)F\,\frac{dq}{q^0}\biggl\}\cr
 	 &\quad\times \frac{1}{\left\{\widetilde{e}\right\}^{\prime}(\beta_0)}\left(q^0-e_0\right)\sqrt{J^0},\cr
	\Gamma_{3}(f)&=-\beta_0\left\{\frac{\Psi}{2}+\frac{\Psi^3-3\Psi^2}{2(2+\Psi-\Psi^2+2\sqrt{1+\Psi})}\right\}\int_{\mathbb{R}^3} q f\sqrt{J^0}\frac{dq}{q^0}\cdot q\sqrt{J^0}+\frac{\beta_0}{\widetilde{h}(\beta_0)}\Gamma_{3}^*(f)\cdot q\sqrt{J^0}\cr
 	&-\beta_0\left\{\frac{4\int_{\mathbb{R}^3} \left(2q^0\Phi+\Phi^2\right)J^0\frac{dq}{q^0}+4\int_{\mathbb{R}^3} \left( u_{\mu} q^\mu\right)^2 f\sqrt{J^0}\frac{dq}{q^0}-\int_{\mathbb{R}^3} f\sqrt{J^0}\frac{dq}{q^0}}{3\widetilde{h}(\beta_0)(n h ) } \right\}\mathbf{q}\cdot q\sqrt{J^0},\cr
 	\Gamma_{4} (f)&=-\beta_0q^0(U^0-1)\sqrt{J^0},\cr
 		\Gamma_{5} (f)&=\frac{1}{\sqrt{J^0}}\int_0^1 (1-\theta)(n-1,U^0-1,U,e-e_0)D^2J(\theta)(n-1,U^0-1,U,e-e_0)^Td\theta,
 	\end{align*}
where $\Gamma_{3}^*$ is given by
 \begin{align*}
 	\Gamma_{3}^*(f)	&=-\sum_{i=1}^3\int_{\mathbb{R}^3} q^if\sqrt{J^0}\frac{dq}{q^0}\int_{\mathbb{R}^3} q^i qf\sqrt{J^0}\frac{dq}{q^0}+\int_{\mathbb{R}^3} \Phi_1 qF\frac{dq}{q^0}\cr
 	&+e_0\int_{\mathbb{R}^3} q f\sqrt{J^0}\frac{dq}{q^0}\left\{\frac{\Psi}{2}+\frac{\Psi^3-3\Psi^2}{2(2+\Psi-\Psi^2+2\sqrt{1+\Psi})}\right\}\cr
 	& -\frac{1}{n}\left(\int_{\mathbb{R}^3} q^0 f\sqrt{J^0} dq+\int_{\mathbb{R}^3}\left(2q^0\Phi+\Phi^2 \right)F\frac{dq}{q^0} \right)\int_{\mathbb{R}^3} q f\sqrt{J^0}\frac{dq}{q^0},
 \end{align*}
 and $\Psi,\Psi_1,\Phi$ and $\Phi_1$ denote
 \begin{align}\label{notation2}
 	\begin{split}
 		\Psi&=2\int_{\mathbb{R}^3} f\sqrt{J^0}\,dq+\left(\int_{\mathbb{R}^3} f\sqrt{J^0}\,dq\right)^2-\sum_{i=1}^3\left(\int_{\mathbb{R}^3}f\sqrt{J^0}q^i\,\frac{dq}{q^0}\right)^2,\cr
 		\Psi_1&= \left(\int_{\mathbb{R}^3} f\sqrt{J^0}\,dq\right)^2-\sum_{i=1}^3\left(\int_{\mathbb{R}^3}f\sqrt{J^0}q^i\,\frac{dq}{q^0}\right)^2,\cr
 		\Phi&= u_{\mu} q^\mu-q^0,\cr
 		\Phi_1&= u_{\mu} q^\mu-q^0+\sum_{i=1}^3 q^i\int_{\mathbb{R}^3}q^i f\sqrt{J^0}\,\frac{dq}{q^0}.
 \end{split}\end{align}
 \end{lemma}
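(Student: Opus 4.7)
The plan is to Taylor expand the relativistic Maxwellian $J(F) = \frac{n}{M(\beta)} e^{-\beta U^\mu q_\mu}$, viewed as a smooth function of the macroscopic parameters $(n, U^\mu, e)$ with $\beta = \beta(e)$ supplied by Proposition \ref{beta decreasing2}, around the equilibrium point $(1,(1,0,0,0), e_0)$ to first order with integral remainder. The quadratic remainder becomes $\Gamma_5$; the first-order terms, after each macroscopic field is split into its linear-in-$f$ and nonlinear-in-$f$ parts, will generate $P(f)$ along with $\Gamma_1,\Gamma_2,\Gamma_3,\Gamma_4$. The smallness of $E(f)(t)$ enters in two places: it legitimizes Proposition \ref{beta decreasing2} and keeps $n>0$ so that $\sqrt{1+\Psi}$ and divisions by $n$ are harmless.

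Using Lemma \ref{KM} to identify $-M'(\beta_0)/M(\beta_0)=e_0$, the first-order partial derivatives at equilibrium are $\partial_n J|_0 = J^0$, $\partial_{U^0} J|_0 = -\beta_0 q^0 J^0$, $\partial_{U^i} J|_0 = \beta_0 q^i J^0$, and $\partial_e J|_0 = -(q^0-e_0) J^0/\{\widetilde{e}\}^{\prime}(\beta_0)$, so dividing the first-order expansion by $\sqrt{J^0}$ gives
\[
\frac{J(F)-J^0}{\sqrt{J^0}} = (n-1)\sqrt{J^0} - \beta_0 q^0(U^0-1)\sqrt{J^0} + \beta_0\, U\cdot q\,\sqrt{J^0} - \frac{(q^0-e_0)(e-e_0)}{\{\widetilde{e}\}^{\prime}(\beta_0)}\sqrt{J^0} + \Gamma_5.
\]

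Next I express each macroscopic field in terms of $f$. From \eqref{n} and Lemma \ref{J^0} one has $n=\sqrt{1+\Psi}$, and rationalizing $\sqrt{1+\Psi}-1-\Psi/2$ against $\sqrt{1+\Psi}+1+\Psi/2$ isolates the linear part $\int f\sqrt{J^0}\,dq$ and produces $\Gamma_1/\sqrt{J^0}$. Since $U^0=\sqrt{1+|U|^2}$ is at least quadratic in $U$, the whole $(U^0-1)$ coefficient becomes $\Gamma_4$. For $U^i = u^i + \mathbf{q}^i/(nh)$, I unfold the definitions in \eqref{LLD1} using $u^\nu q_\nu = q^0+\Phi$ and reduce by Lemma \ref{J^0}; cancellations driven by $\widetilde{h}(\beta_0)=e_0+1/\beta_0$ isolate the linear part $\frac{1}{\widetilde{h}(\beta_0)}\int q^i f\sqrt{J^0}\,dq$, and the rest is packaged into $\Gamma_3$ (and $\Gamma_3^*$). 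Similarly, $e-e_0 = \frac{1}{n}\bigl[\int q^0 f\sqrt{J^0}\,dq + \int F(2q^0\Phi+\Phi^2)\,dq/q^0 - (n-1)e_0\bigr]$ has linear part $\int(q^0-e_0)f\sqrt{J^0}\,dq$, with nonlinear remainder $\Gamma_2$.

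The main obstacle is the exact algebraic packaging of these nonlinear pieces into the stated closed form. The factor $\frac{\Psi}{2}+\frac{\Psi^3-3\Psi^2}{2(2+\Psi-\Psi^2+2\sqrt{1+\Psi})}$ appearing in $\Gamma_2$ and $\Gamma_3$ equals $(n-1)/n$, as one verifies by the substitution $s=\sqrt{1+\Psi}$ and factoring $2+\Psi-\Psi^2+2s=(s+1)^2[1-(s-1)^2]$. Combined with the identity $\Psi_1 = \Psi - 2\int f\sqrt{J^0}\,dq$, this closed form lets one express the $1/n$-prefactored quadratic-in-$f$ corrections in $e-e_0$ and $U^i$ in the stated shape; collecting the linear pieces then reproduces $P(f)$ term by term, completing the identification.
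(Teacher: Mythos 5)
Your proposal is correct and follows essentially the same route as the paper: a first-order Taylor expansion of $J$ with integral remainder along the straight-line path from the equilibrium values $(1,(1,0,0,0),e_0)$ to the actual macroscopic fields, explicit first-order partial derivatives via Lemma \ref{KM} so that $\partial_e J|_0 = -(q^0-e_0)J^0/\{\widetilde{e}\}'(\beta_0)$, and then algebraic extraction of the linear-in-$f$ pieces from $n-1$, $e-e_0$, and $U$ using the same rationalized forms of $\sqrt{1+\Psi}$ and $1/\sqrt{1+\Psi}$ that the paper quotes from \cite{BCNS}. Your side observation that $\frac{\Psi}{2}+\frac{\Psi^3-3\Psi^2}{2(2+\Psi-\Psi^2+2\sqrt{1+\Psi})}=(n-1)/n$ (checked by the substitution $s=\sqrt{1+\Psi}$) is a correct and convenient closed form the paper does not record, but the underlying computation is identical.
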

 \begin{proof}
Define the transitional macroscopic fields between $F$ and $J^0$:
$$
\left(n_{\theta},U^0_{\theta},U_{\theta},  e_{\theta} \right) =\theta\left( n ,U^0,U , e\right) +(1-\theta)\left(1,1, 0, e_0\right),
$$
and the transitional relativistic Maxwellian:
$$
J(\theta)=\frac{n_\theta}{M(\beta_\theta)} e^{-\beta_\theta U_\theta^\mu q_\mu},\quad \text{where}\ \beta_\theta=(\widetilde{e})^{-1}(e_\theta).
$$
Then $J(F)$ and $J^0$ can be rewritten by
\begin{equation*}
J(F)=J(n,U^0,U,e )\equiv J(1), \qquad J^0=J(1,1,0,e_0 )\equiv J(0).
\end{equation*}	
We then apply Taylor expansion to have
 \begin{align}\label{taylor}
 \begin{split}
 J (F)-J^0&=J(1)-J(0)\cr
 &=J^\prime(0)+\int_{0}^1(1-\theta)J^{\prime\prime}(\theta) \,d\theta\cr
 &= \frac{\partial J(0)}{\partial n_\theta}\frac{\partial n_\theta}{\partial \theta}\bigg|_{\theta=0}+\frac{\partial J(0)}{\partial U^0_\theta}\frac{\partial U^0_\theta}{\partial \theta}\bigg|_{\theta=0}+\nabla_{U_\theta}J(0)\cdot \frac{\partial U_\theta}{\partial \theta}\bigg|_{\theta=0}+\frac{\partial J(0)}{\partial e_\theta}\frac{\partial e_\theta}{\partial \theta}\bigg|_{\theta=0}\cr
 &+ \int_0^1 (1-\theta)(n -1,U ^0-1,U ,e -e_0)D^2J(\theta)(n -1,U ^0-1,U ,e -e_0)^Td\theta.
\end{split}
 \end{align}
Now, we compute
\begin{align*}
		\frac{\partial J(\theta)}{\partial n_\theta} =\frac{1}{n_\theta}&J(\theta),\qquad \frac{\partial J(\theta)}{\partial U^0_\theta}=-\beta_\theta q^0J(\theta), \qquad\nabla_{U_\theta}J(\theta)=\beta_\theta qJ(\theta),\cr
		&\frac{\partial J(\theta)}{\partial e_\theta} =-\frac{1}{\left\{\widetilde{e}\right\}^{\prime}(\beta_\theta)}\left(\frac{M^\prime(\beta_\theta)}{M\left(\beta_\theta\right)}+U_\theta^\mu q_\mu \right)J(\theta),
\end{align*}
so that
\begin{align}\label{derivative2}
	\begin{split}
	\frac{\partial J(0)}{\partial n_\theta}=&J^0,\qquad \frac{\partial J(0)}{\partial U^0_\theta}=-\beta_0 q^0J^0,\qquad \nabla_{U_\theta}J(0)=\beta_0 qJ^0,\cr
	&\frac{\partial J(0)}{\partial e_\theta}=-\frac{1}{\left\{\widetilde{e}\right\}^{\prime}(\beta_0)}\left(q^0-e_0\right)J^0.
\end{split}\end{align}
In the last line, we used Lemma \ref{KM}:
$$
\frac{M^\prime(\beta_0)}{M(\beta_0)}=-\frac{K_1(\beta_0)}{K_2(\beta_0)}-\frac{3}{\beta_0} =-e_0.
$$
Inserting \eqref{derivative2} into \eqref{taylor}, we derive
\begin{align*}	
 		 	\frac{J (F)-J^0}{\sqrt{J^0}}&=(n -1)\sqrt{J^0}-\frac{1}{\left\{\widetilde{e}\right\}^{\prime}(\beta_0)}(e -e_0)(q^0-e_0)\sqrt{J^0}+\beta_0U \cdot q\sqrt{J^0}-\beta_0q^0(U ^0-1)\sqrt{J^0}\cr
 		&+\frac{1}{\sqrt{J^0}}\int_0^1 (1-\theta)(n -1,U ^0-1,U ,e -e_0)D^2J(\theta)(n -1,U ^0-1,U ,e -e_0)^Td\theta\cr
 		&=I_1+I_2+I_3+I_4+I_5.
 	 	\end{align*}	
We consider each $I_i$ $(i=1,\cdots,5)$ separately. Note in the following that $I_1,I_2,I_3$ are decomposed into the linear part and the nonlinear part.
 	\newline
 	
\noindent 	$\bullet$ Decomposition of $I_1$: Inserting $F=J^0+f\sqrt{J^0}$, a direct computation gives
\begin{align}\label{n=}
	\begin{split}
		n&=\left\{\left(\int_{\mathbb{R}^3} Fdq\right)^2-\sum_{i=1}^3\left(\int_{\mathbb{R}^3}Fq^i\frac{dq}{q^0}\right)^2\right\}^{\frac{1}{2}}\cr
		&=\left\{ 1+2\int_{\mathbb{R}^3} f\sqrt{J^0}\,dq+\left(\int_{\mathbb{R}^3} f\sqrt{J^0}\,dq\right)^2-\sum_{i=1}^3\left(\int_{\mathbb{R}^3}f\sqrt{J^0}q^i\,\frac{dq}{q^0}\right)^2\right\}^{\frac{1}{2}}\cr
		&= \sqrt{1+\Psi}.
\end{split}	\end{align}
To extract the linear part from $n$, we recall the following identity \cite{BCNS}:
 \begin{equation}\label{route pi}
 \sqrt{1+\Psi}=1+ \frac{\Psi}{2}-\frac{\Psi^2}{2(2+\Psi+2\sqrt{1+\Psi})}.
\end{equation}
Using this identity together with \eqref{notation2} and \eqref{n=} gives
  \begin{align*}
 			(n-1)\sqrt{J^0}&=\left(\sqrt{1+\Psi}-1\right)\sqrt{J^0}\cr
&=\left( \frac{\Psi}{2}-\frac{\Psi^2}{2(2+\Psi+2\sqrt{1+\Psi})}
\right)\sqrt{J^0}\cr
 			&= \left(\int_{\mathbb{R}^3} f\sqrt{J^0}\,dq+\frac{\Psi_1}{2}-\frac{\Psi^2}{2(2+\Psi+2\sqrt{1+\Psi})}\right)\sqrt{J^0}\cr
&=  \int_{\mathbb{R}^3} f\sqrt{J^0}\,dq\sqrt{J^0}+\Gamma_1(f).
	 	\end{align*}
 	\newline
 	
\noindent 	$\bullet$ Decomposition of $I_2$: First, we consider the following identity \cite{BCNS}:
\begin{align*}
	\frac{1}{\sqrt{1+\Psi}}=1-\frac{\Psi}{2}-\frac{\Psi^3-3\Psi^2}{2(2+\Psi-\Psi^2+2\sqrt{1+\Psi})},
\end{align*}
to decompose $1/n$  as follows:
\begin{align}\label{1/n=}
\begin{split}	\frac{1}{n}&=1-\frac{\Psi}{2}-\frac{\Psi^3-3\Psi^2}{2(2+\Psi-\Psi^2+2\sqrt{1+\Psi})}\cr
	&=	1-\int_{\mathbb{R}^3} f\sqrt{J^0}\,dq-\frac{\Psi_1}{2}	-\frac{\Psi^3-3\Psi^2}{2(2+\Psi-\Psi^2+2\sqrt{1+\Psi})}.
\end{split}
\end{align}
This identity, together with $\eqref{notation2}_3$ enables one to express $e-e_0$ as
\begin{align}\label{e-e3}
 \begin{split}
	e-e_0	&=\frac{1}{n}\int_{\mathbb{R}^3} (u^\mu  q_\mu)^2 F\frac{dq}{q^0}-e_0\cr
	&=	\left\{	1-\int_{\mathbb{R}^3} f\sqrt{J^0}\,dq-\frac{\Psi_1}{2}	-\frac{\Psi^3-3\Psi^2}{2(2+\Psi-\Psi^2+2\sqrt{1+\Psi})}\right\}\cr
	&\quad\times\int_{\mathbb{R}^3} \left\{	(q^0)^2+2q^0\Phi+\Phi^2\right\}F\frac{dq}{q^0}-e_0\cr
	&\equiv\left\{\left( 1-\int_{\mathbb{R}^3} f\sqrt{J^0}\,dq \right)\int_{\mathbb{R}^3}(q^0)^2 F\,\frac{dq}{q^0}-e_0  \right\}+R_{I_2}(f).
\end{split}
\end{align}
We then extract the linear part from the above expression:
 \begin{align}\label{e-e4}
 	\begin{split}
 	&\left(	1-\int_{\mathbb{R}^3} f\sqrt{J^0}\,dq\right)\int_{\mathbb{R}^3} (q^0)^2 F\frac{dq}{q^0}-e_0\cr
 	&\qquad= 	\left(	1-\int_{\mathbb{R}^3} f\sqrt{J^0}\,dq\right)\int_{\mathbb{R}^3} q^0\left( J^0 +f\sqrt{J^0}\right)\,dq-e_0\cr
 	&\qquad=e_0+\int_{\mathbb{R}^3} q^0 f\sqrt{J^0} \,dq-e_0\int_{\mathbb{R}^3}  f\sqrt{J^0} \,dq-\int_{\mathbb{R}^3}  f\sqrt{J^0} \,dq\int_{\mathbb{R}^3} q^0 f\sqrt{J^0} \,dq-e_0\cr
 	&\qquad=\int_{\mathbb{R}^3} \left(q^0-e_0\right) f\sqrt{J^0} \,dq-\int_{\mathbb{R}^3}  f\sqrt{J^0} \,dq\int_{\mathbb{R}^3} q^0 f\sqrt{J^0} \,dq,
\end{split}\end{align}
to write \eqref{e-e3} as
\begin{align}\label{e-e0}		
 				e-e_0&= \int_{\mathbb{R}^3} \left(q^0-e_0\right)f\sqrt{J^0}\,dq+\left\{-\int_{\mathbb{R}^3} f\sqrt{J^0}\,dq \int_{\mathbb{R}^3} q^0f\sqrt{J^0}\,dq+R_{I_2}(f)\right\}.
 	\end{align}
 Therefore, we obtain the following decomposition of $I_2$:
 	\begin{align*}
 		I_2&=-\frac{1}{\left\{\widetilde{e}\right\}^{\prime}(\beta_0)}(e-e_0)\left(q^0-e_0\right)\sqrt{J^0}\cr
 		&=-\frac{1}{\left\{\widetilde{e}\right\}^{\prime}(\beta_0)}\int_{\mathbb{R}^3} \left(q^0-e_0\right) f\sqrt{J^0}\,dq\left(q^0-e_0 \right)\sqrt{J^0}+\Gamma_2 (f).
 	\end{align*}
 	\newline

\noindent 	$\bullet$ Decomposition of $I_3$: We recall from \eqref{LLD1} that $nh$ takes the form of
 			$$
 			nh=\frac{4}{3}\int_{\mathbb{R}^3} \left(u^\mu q_\mu\right)^2 F\frac{dq}{q^0}-\frac{1}{3}\int_{\mathbb{R}^3} F\frac{dq}{q^0}
 			$$
to derive from Lemma \ref{J^0} and $\eqref{notation2}_3$ that
 		\begin{align}\label{nh0}
 			\begin{split}
 			nh	&=\frac{4}{3}\int_{\mathbb{R}^3} \left\{(q^0)^2+2q^0\Phi+\Phi^2\right\} J^0\frac{dq}{q^0}+\frac{4}{3}\int_{\mathbb{R}^3} ( u_{\mu} q^\mu)^2 f\sqrt{J^0}\frac{dq}{q^0}-\frac{1}{3}\int_{\mathbb{R}^3}\left(  J^0+f\sqrt{J^0}\right)\frac{dq}{q^0}\cr
 			&=\frac{4}{3}e_0+\frac{4}{3}\int_{\mathbb{R}^3} \left(2q^0\Phi+\Phi^2\right)J^0\frac{dq}{q^0}+\frac{4}{3}\int_{\mathbb{R}^3} ( u_{\mu} q^\mu)^2 f\sqrt{J^0}\frac{dq}{q^0}-\frac{1}{3}\frac{K_1(\beta_0)}{K_2(\beta_0)}-\frac{1}{3}\int_{\mathbb{R}^3} f\sqrt{J^0}\frac{dq}{q^0} \cr
 			&=\widetilde{h}(\beta_0)+\frac{4}{3}\int_{\mathbb{R}^3} \left(2q^0\Phi+\Phi^2\right)J^0\frac{dq}{q^0}+\frac{4}{3}\int_{\mathbb{R}^3} ( u_{\mu} q^\mu)^2 f\sqrt{J^0}\frac{dq}{q^0}-\frac{1}{3}\int_{\mathbb{R}^3} f\sqrt{J^0}\frac{dq}{q^0}.
 		 \end{split}	\end{align}
 	 In the last line, we used
 	 $$
 	 \frac{4}{3}e_0-\frac{1}{3}\frac{K_1(\beta_0)}{K_2(\beta_0)}= 	 \frac{4}{3}\left(\frac{K_1(\beta_0)}{K_2(\beta_0)}+\frac{3}{\beta_0}\right)-\frac{1}{3}\frac{K_1(\beta_0)}{K_2(\beta_0)}= \widetilde{h}(\beta_0).
 	 $$
 From this, we can express $1/(nh)$ by
 	 	\begin{align*}
 			\frac{1}{nh}&=\frac{1}{\widetilde{h}(\beta_0)}-\frac{4\int_{\mathbb{R}^3} \left(2q^0\Phi+\Phi^2\right)J^0\frac{dq}{q^0}+4\int_{\mathbb{R}^3} ( u_{\mu} q^\mu)^2 f\sqrt{J^0}\frac{dq}{q^0}-\int_{\mathbb{R}^3} f\sqrt{J^0}\frac{dq}{q^0}}{3\widetilde{h}(\beta_0)(n h ) }\cr
 		&\equiv \frac{1}{\widetilde{h}(\beta_0)}+R_{I_3}(f).
 	\end{align*}
This leads to
 \begin{align}\label{U_F}
 \begin{split}
 			U&=u+\frac{\mathbf{q}}{nh}\cr
 			&=\int_{\mathbb{R}^3} q f\sqrt{J^0}\frac{dq}{q^0}-\left\{\frac{H}{2}+\frac{\Psi^3-3\Psi^2}{2(2+\Psi-\Psi^2+2\sqrt{1+\Psi})}\right\}\int_{\mathbb{R}^3} q f\sqrt{J^0}\frac{dq}{q^0} + \frac{\mathbf{q}}{\widetilde{h}(\beta_0)}+R_{I_3}(f)\mathbf{q} .
 		\end{split}
 	\end{align}
 Note that we used \eqref{1/n=}:
 	\begin{align}\label{u_F}
 		\begin{split}
 			u &=\frac{1}{n }\int_{\mathbb{R}^3} qF\frac{dq}{q^0}\cr
 &=\left\{1-\frac{\Psi}{2}-\frac{\Psi^3-3\Psi^2}{2(2+\Psi-\Psi^2+2\sqrt{1+\Psi})}\right\}\int_{\mathbb{R}^3} q f\sqrt{J^0}\frac{dq}{q^0}\cr
 &=\int_{\mathbb{R}^3} q f\sqrt{J^0}\frac{dq}{q^0}-\left\{\frac{\Psi}{2}+\frac{\Psi^3-3\Psi^2}{2(2+\Psi-\Psi^2+2\sqrt{1+\Psi})}\right\}\int_{\mathbb{R}^3} q f\sqrt{J^0}\frac{dq}{q^0}.
 	\end{split}\end{align}
 	We now focus on $\mathbf{q}$ to extract the linear part, which is defined in $\eqref{LLD1}_2$ by
 	\begin{equation}\label{heat}
 	\mathbf{q}=\int_{\mathbb{R}^3}( u_{\mu} q^\mu) q F\,\frac{dq}{q^0}-u \int_{\mathbb{R}^3} ( u_{\mu} q^\mu)^2 F\,\frac{dq}{q^0}.
 	\end{equation}
 	Recall from $\eqref{notation2}_4$ that
$$
 u_{\mu} q^\mu=q^0-\sum_{i=1}^3 q^i\int q^i f\sqrt{J^0}\frac{dq}{q^0}+\Phi_1,
$$
and insert this into the first term of \eqref{heat} to derive
 	\begin{align}\label{flux1}
 		\begin{split}
 			\int_{\mathbb{R}^3}( u_{\mu} q^\mu) q F\frac{dq}{q^0}	&= \int_{\mathbb{R}^3} \biggl(q^0-\sum_{i=1}^3 q^i\int q^i f\sqrt{J^0}\frac{dq}{q^0}+\Phi_1\biggl) q \left(J^0+f\sqrt{J^0}\right)\frac{dq}{q^0}\cr
 			&=\int_{\mathbb{R}^3}  q f\sqrt{J^0}dq-\frac{1}{\beta_0}\int_{\mathbb{R}^3} qf\sqrt{J^0}\frac{dq}{q^0}\cr
 			& -\sum_{i=1}^3\int_{\mathbb{R}^3} q^if\sqrt{J^0}\frac{dq}{q^0}\int_{\mathbb{R}^3} q^i qf\sqrt{J^0}\frac{dq}{q^0}+\int_{\mathbb{R}^3} \Phi_1 qF\frac{dq}{q^0}.
 		\end{split}
 	\end{align}
 Here we used Lemma \ref{J^0} so that
 \begin{align*}
 	\sum_{i=1}^3 \int_{\mathbb{R}^3} q^i f\sqrt{J^0}\,\frac{dq}{q^0}\int_{\mathbb{R}^3} q^i qJ^0\,\frac{dq}{q^0}=\frac{1}{\beta_0}\int_{\mathbb{R}^3} qf\sqrt{J^0}\,\frac{dq}{q^0}.
 \end{align*}
On the other hand, in view of $\eqref{notation2}_3$, we compute the second term of \eqref{heat} as
 \begin{align}\label{flux2}
 	\begin{split}
 		\int_{\mathbb{R}^3} ( u_{\mu} q^\mu)^2F\frac{dq}{q^0}	&=\int_{\mathbb{R}^3} \left\{(q^0)^2+2q^0\Phi+\Phi^2 \right\}(J^0+f\sqrt{J^0})\,\frac{dq}{q^0}\cr
 		&=e_0+\int_{\mathbb{R}^3} q^0 f\sqrt{J^0} \,dq+\int_{\mathbb{R}^3}\left(2q^0\Phi+\Phi^2 \right)F\,\frac{dq}{q^0}.
 	\end{split}
 \end{align}
We now go back to \eqref{heat} with \eqref{flux1} and \eqref{flux2} to get
  \begin{align}\label{heat2}
  	\begin{split}
 \mathbf{q}	&= \int_{\mathbb{R}^3}  q f\sqrt{J^0}dq-\frac{1}{\beta_0}\int_{\mathbb{R}^3} qf\sqrt{J^0}\frac{dq}{q^0}-\sum_{i=1}^3\int_{\mathbb{R}^3} q^if\sqrt{J^0}\frac{dq}{q^0}\int_{\mathbb{R}^3} q^i qf\sqrt{J^0}\frac{dq}{q^0}+\int_{\mathbb{R}^3} \Phi_1 qF\frac{dq}{q^0}\cr
 	&-\left\{1-\frac{\Psi}{2}-\frac{\Psi^3-3\Psi^2}{2(2+\Psi-\Psi^2+2\sqrt{1+\Psi})}\right\}\int_{\mathbb{R}^3} q f\sqrt{J^0}\frac{dq}{q^0}\cr
 	&\quad\times \left(e_0+\int_{\mathbb{R}^3} q^0 f\sqrt{J^0} \,dq+\int_{\mathbb{R}^3}\left(2q^0\Phi+\Phi^2 \right)F\,\frac{dq}{q^0}\right)\cr
 	&=	\int_{\mathbb{R}^3}  q f\sqrt{J^0}dq-\left(\frac{1}{\beta_0}+e_0\right)\int_{\mathbb{R}^3} qf\sqrt{J^0}\frac{dq}{q^0}+\Gamma_{3}^*(f)\cr
 	&=	\int_{\mathbb{R}^3}  q f\sqrt{J^0}dq-\widetilde{h}(\beta_0)\int_{\mathbb{R}^3} qf\sqrt{J^0}\frac{dq}{q^0}+\Gamma_{3}^*(f).
\end{split} \end{align}
Plugging \eqref{heat2} into  \eqref{U_F} gives
	\begin{align*}
		U&=\int_{\mathbb{R}^3} q f\sqrt{J^0}\frac{dq}{q^0}-\left\{\frac{\Psi}{2}+\frac{\Psi^3-3\Psi^2}{2(2+\Psi-\Psi^2+2\sqrt{1+\Psi})}\right\}\int_{\mathbb{R}^3} q f\sqrt{J^0}\frac{dq}{q^0}\cr
		& + \frac{1}{\widetilde{h}(\beta_0)}\left(	\int_{\mathbb{R}^3}  q f\sqrt{J^0}dq-\widetilde{h}(\beta_0)\int_{\mathbb{R}^3} qf\sqrt{J^0}\frac{dq}{q^0}+\Gamma_{3}^*(f)\right)+R_{I_3}(f)\mathbf{q} \cr
		&=\frac{1}{\widetilde{h}(\beta_0)}\int_{\mathbb{R}^3}  q f\sqrt{J^0}dq -\left\{\frac{\Psi}{2}+\frac{\Psi^3-3\Psi^2}{2(2+\Psi-\Psi^2+2\sqrt{1+\Psi})}\right\}\int_{\mathbb{R}^3} q f\sqrt{J^0}\frac{dq}{q^0}\cr
		& +\frac{1}{\widetilde{h}(\beta_0)}\Gamma_{3}^*(f) +R_{I_3}(f)\mathbf{q},
 \end{align*}	
and thus we have
 	\begin{align*}
 		I_3=\beta_0U \cdot q\sqrt{J^0}&= \frac{\beta_0}{\widetilde{h}(\beta_0)}\int qf\sqrt{J^0}\, dq\cdot q\sqrt{J^0}+\Gamma_{3}(f).
 	\end{align*}

\noindent $\bullet$ $I_4,I_5$: We note that $I_4=\Gamma_{4}(f)$ and $I_5=\Gamma_{5}(f)$. This completes the proof.
 \end{proof}
\noindent
\newline

The following proposition gives the linearized Anderson-Witting model:
\begin{proposition}\label{lin3}
	For the solution $F=J^0+f\sqrt{J^0}$ to the Anderson-Witting model \eqref{AWRBGK1}, the perturbation $f$ verifies
	\begin{align}\label{LAW}\begin{split}
			\partial_t f+\hat{q}\cdot\nabla_x f&=L(f)+\Gamma (f),\cr
f_0(x,q)&=f(0,x,q),
\end{split}\end{align}
	where the linearized operator $L(f)$ is defined by $P(f)-f$, and the nonlinear perturbation $\Gamma (f)$ is defined by
	\begin{align*}
		\Gamma (f)&=\frac{U_\mu q^\mu}{q^0} \sum_{i=1}^5\Gamma_i (f)+\frac{1}{q^0}\left(\Phi+ \frac{\mathbf{q}_\mu q^\mu}{nh} \right) L(f).
	\end{align*}
\end{proposition}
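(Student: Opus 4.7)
The proof is largely bookkeeping on top of Lemma~\ref{lin2}, so the plan is to organize the substitution of $F=J^0+f\sqrt{J^0}$ into the Anderson--Witting equation and then isolate the linear operator $L(f)=P(f)-f$ by rewriting the relaxation frequency $U_\mu q^\mu/q^0$ around its equilibrium value $1$.

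First, I would plug the ansatz $F=J^0+f\sqrt{J^0}$ into \eqref{AWRBGK1}. Since $J^0$ is a function of $q^0$ only and hence independent of $t,x$, the transport part simply becomes $\sqrt{J^0}\,(\partial_t f+\hat q\cdot\nabla_x f)$. The right-hand side equals
$$
\frac{U_\mu q^\mu}{q^0}\bigl(J(F)-J^0-f\sqrt{J^0}\bigr).
$$
Dividing by $\sqrt{J^0}$ and applying Lemma~\ref{lin2} to $(J(F)-J^0)/\sqrt{J^0}$, I would obtain
$$
\partial_t f+\hat q\cdot\nabla_x f=\frac{U_\mu q^\mu}{q^0}\Bigl(P(f)-f+\sum_{i=1}^{5}\Gamma_i(f)\Bigr).
$$

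Next I need to extract from this the clean form $L(f)+\Gamma(f)$ with $L(f)=P(f)-f$. The key algebraic identity is the decomposition
$$
\frac{U_\mu q^\mu}{q^0}=1+\frac{1}{q^0}\Bigl(\Phi+\frac{\mathbf{q}_\mu q^\mu}{nh}\Bigr),
$$
which follows at once from $U^\mu=u^\mu+\mathbf{q}^\mu/(nh)$ together with the definition $\Phi=u_\mu q^\mu-q^0$ in $\eqref{notation2}_3$: indeed
$U_\mu q^\mu-q^0=u_\mu q^\mu-q^0+\mathbf{q}_\mu q^\mu/(nh)=\Phi+\mathbf{q}_\mu q^\mu/(nh)$. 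Using this to split the coefficient in front of $L(f)=P(f)-f$ produces exactly the $\tfrac{1}{q^0}(\Phi+\mathbf{q}_\mu q^\mu/(nh))L(f)$ term in $\Gamma(f)$, while the $\tfrac{U_\mu q^\mu}{q^0}\sum_i\Gamma_i(f)$ term is kept intact. Collecting everything yields the stated form of \eqref{LAW}.

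There is no serious obstacle here, since all the delicate expansions have already been performed in Lemma~\ref{lin2}; the only point that warrants care is the consistent use of the Landau--Lifshitz decomposition $U^\mu=u^\mu+\mathbf{q}^\mu/(nh)$ and the definition of $\Phi$ when splitting the relaxation coefficient, which is why I would isolate that identity as a separate one-line computation before assembling the final expression. Finally, the initial condition $f_0(x,q)=f(0,x,q)$ is inherited directly from $F_0=J^0+f_0\sqrt{J^0}$.
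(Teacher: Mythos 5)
Your proof is correct and follows essentially the same route as the paper: substitute $F=J^0+f\sqrt{J^0}$, invoke Lemma~\ref{lin2} for $(J(F)-J^0)/\sqrt{J^0}=P(f)+\sum_i\Gamma_i(f)$, and split the relaxation coefficient via $U_\mu q^\mu/q^0=1+\tfrac{1}{q^0}\bigl(\Phi+\mathbf{q}_\mu q^\mu/(nh)\bigr)$, which is the identity the paper records explicitly. Nothing to add.
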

\begin{proof}
Insert $F=J^0+f\sqrt{J^0}$ into \eqref{AWRBGK1} to obtain
	\begin{align*}
		\partial_t f+\hat{q}\cdot\nabla_x f&=\frac{U_\mu q^\mu}{q^0}\left(\frac{J(F)-J^0}{\sqrt{J^0}}-f\right).
	\end{align*}
	Using $\eqref{notation2}_3$, we  express $ U_\mu q^\mu/q^0$ as
	\begin{align*}
		\frac{U_\mu q^\mu}{q^0}&=\frac{1}{q^0}\left(u_\mu q^\mu+\frac{\mathbf{q} _\mu q^\mu}{n h }\right)\cr
		&=\frac{1}{q^0}\left(q^0+\Phi +\frac{\mathbf{q}_\mu q^\mu}{n h }\right)\cr
		&=1+\frac{1}{q^0}\left( \Phi+ \frac{\mathbf{q}_\mu q^\mu}{nh} \right),
	\end{align*}
which, together with Lemma \ref{lin2}, gives the desired result.	
\end{proof}
 \noindent
\subsection{Analysis of the linearized operator}
	Let $N$ be the five dimensional space spanned by $\{\sqrt{J^0},q^\alpha \sqrt{J^0}\}$. Denote $e_i$ $(i=1,\cdots,5)$ by
		$$
	e_1=\sqrt{J^0},\qquad  e_{2,3,4}=\sqrt{\frac{\beta_0}{\widetilde{h}(\beta_0)}}q\sqrt{J^0},  \qquad e_5=\sqrt{-\frac{1}{\left\{\widetilde{e}\right\}^{\prime}(\beta_0)}}\left(q^0-e_0\right)\sqrt{J^0},
	$$
so that $P(f)$ can be written by
	\begin{align}\label{Pf}
		P(f)=\langle f,e_1\rangle_q e_1+\langle f,e_{2,3,4} \rangle_q \cdot e_{2,3,4}+ \langle f,e_{5}\rangle_q e_5.
	\end{align}
Notice that $e_5$ is well defined since, by Lemma \ref{beta decreasing}, we have
	\begin{align*}
		-\left\{\widetilde{e}\right\}^{\prime}(\beta_0)&= -\frac{d}{d\beta}\left\{\frac{K_1(\beta)}{K_2(\beta)}+\frac{3}{\beta} \right\}_{\beta=\beta_0}>0.
	\end{align*}
\begin{lemma}\label{ortho}
$P$ is an orthonormal projection from $L_q^2(\mathbb{R}^3)$ onto $N$.
\end{lemma}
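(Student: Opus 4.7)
The plan is to reduce the claim to showing that $(e_1,e_2,e_3,e_4,e_5)$ is an orthonormal system in the $L^2_q$ inner product. Each of the five vectors already lies in $N$, so once orthonormality (and hence linear independence) is established, the five-tuple spans the five-dimensional space $N$, and formula \eqref{Pf} exhibits $P$ as $f\mapsto\sum_{i=1}^5\langle f,e_i\rangle_q e_i$, which is the standard orthogonal projection onto that span.

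For the orthogonality I would run through the fifteen inner products $\langle e_i,e_j\rangle_q$. Most vanish cheaply: $\langle e_1,e_{i+1}\rangle_q$, $\langle e_{i+1},e_{j+1}\rangle_q$ for $i\neq j$, and $\langle e_{i+1},e_5\rangle_q$ are odd in some $q^k$, so their integrals against $J^0$ are zero. The remaining cross term $\langle e_1,e_5\rangle_q$ is a multiple of $\int(q^0-e_0)J^0\,dq$, which vanishes because Lemma \ref{KM} identifies $\int q^0 J^0\,dq=-M'(\beta_0)/M(\beta_0)=e_0$ and $\int J^0\,dq=1$. The normalization $\|e_1\|^2_q=\int J^0\,dq=1$ is immediate from the definition of $M(\beta_0)$.

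The substantive computations are the two remaining normalizations. For $\|e_{i+1}\|^2_q$ I would integrate by parts using $q^i e^{-\beta_0 q^0}=-(q^0/\beta_0)\,\partial_{q^i}e^{-\beta_0 q^0}$ to get
\[
\int (q^i)^2 J^0\,dq=\frac{1}{\beta_0}\int q^0 J^0\,dq+\frac{1}{\beta_0}\int(q^i)^2 J^0\,\frac{dq}{q^0}=\frac{e_0}{\beta_0}+\frac{1}{\beta_0^2}=\frac{\widetilde h(\beta_0)}{\beta_0},
\]
where the second integral is read directly from Lemma \ref{J^0}; this exactly cancels the factor $\beta_0/\widetilde h(\beta_0)$ defining $e_{i+1}$. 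For $\|e_5\|^2_q$ I would expand $(q^0-e_0)^2=1+|q|^2-2e_0 q^0+e_0^2$, apply the previous formula after summing over $i=1,2,3$ for the $|q|^2$-piece, and reduce using $\int q^0J^0\,dq=e_0$ and $\int J^0\,dq=1$. An algebraic manipulation, supplemented by the closed-form expression for $\{\widetilde e\}'(\beta_0)$ obtained from Lemma \ref{Bessel2}, should then identify the result with $-\{\widetilde e\}'(\beta_0)$, which cancels the normalization constant in $e_5$.

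The one place that needs genuine care is this last identification: both $\int(q^0-e_0)^2J^0\,dq$ and $-\{\widetilde e\}'(\beta_0)$ are polynomial expressions in $K_1/K_2(\beta_0)$ and $1/\beta_0$, and matching them is an algebraic identity that only emerges after Lemma \ref{Bessel2} is substituted and the terms are regrouped. The strict monotonicity established in Lemma \ref{beta decreasing} ensures $\{\widetilde e\}'(\beta_0)<0$, so the square root defining the normalization of $e_5$ is real, which completes the verification.
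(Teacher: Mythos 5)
Your proof is correct and follows essentially the same route as the paper: verify that $\{e_i\}_{i=1}^5$ is orthonormal by computing moments of $J^0$, with orthogonality following largely by parity. The one cosmetic difference is your evaluation of $\int(q^i)^2J^0\,dq$ by integrating by parts in the Cartesian variable $q^i$ (yielding $\widetilde h(\beta_0)/\beta_0$ directly from the identities in Lemma \ref{J^0}), whereas the paper redoes a spherical-coordinate integration by parts to reach the equivalent expression $\tfrac{12}{\beta_0^2}+\tfrac{3}{\beta_0}\tfrac{K_1}{K_2}(\beta_0)$; both lead to the same normalization and to the same algebraic matching against $-\{\widetilde e\}'(\beta_0)$ for $\|e_5\|^2_{L^2_q}$.
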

\begin{proof}
	It is enough to show that $\{e_i\}$ $(i=1,\cdots,5)$ forms an orthonormal basis.
	\newline
$\bullet$ $\|e_1\|_{L^2_q}=1$: It is straightforward from the definition of $J^0$ that
\begin{equation*}
	\langle e_1,e_1\rangle_q =\int_{\mathbb{R}^3}  J^0dq=1.
\end{equation*}
$\bullet$ $\|e_{i+1}\|_{L^2_q}=1$ $(i=1,2,3)$: A direct computation, using the spherical coordinates and integration by parts,  gives
	\begin{align*}
		\int_{\mathbb{R}^3} |q|^2 J^0 dq&=\frac{1}{M(\beta_0)} \int_{\mathbb{R}^3} |q|^2e^{-\beta_0\sqrt{1+|q|^2}}\,dq\cr
	&=\frac{4\pi}{M(\beta_0)} \int_0^\infty r^4e^{-\beta_0\sqrt{1+r^2}}dr\cr
	&=\frac{4\pi}{\beta_0^2M(\beta_0)} \left(3\int_0^\infty e^{-\beta_0\sqrt{1+|r|^2}}  \,dr+\frac{12}{\beta_0}\int_0^\infty e^{-\beta_0\sqrt{1+|r|^2}} \frac{2r^2+1}{\sqrt{1+r^2}} dr \right).
\end{align*}
This, together with \eqref{CV} and \eqref{K_2}, leads to
\begin{equation}\label{|q|^2J0}
	\int_{\mathbb{R}^3} |q|^2 J^0 dq=\frac{12}{\beta^2_0}+\frac{3}{\beta_0}\frac{K_1(\beta_0)}{K_2(\beta_0)}.
\end{equation}
Therefore,
	\begin{align*}
 \langle e_{i+1},e_{i+1}\rangle_q&=\frac{\beta_0}{\widetilde{h}(\beta_0)}\int_{\mathbb{R}^3} (q^i)^2 J^0dq\cr
 &=\left(\frac{1}{\beta_0}\frac{K_1(\beta_0)}{K_2(\beta_0)}+\frac{4}{\beta_0^2}\right)^{-1}\int_{\mathbb{R}^3} \frac{1}{3}|q|^2J^0dq\cr
 &=1.
	\end{align*}
$\bullet$ $\|e_5\|_{L^2_q}=1$: We use Lemma \ref{Bessel2} to obtain
\begin{align*}
	-\left\{\widetilde{e}\right\}^{\prime}(\beta_0)&=-\frac{d}{d\beta}\left\{\frac{K_1(\beta)}{K_2(\beta)}+\frac{3}{\beta}  \right\}_{\beta=\beta_0}\cr
	&=-\frac{3}{\beta_0}\frac{K_1(\beta_0)}{K_2(\beta_0)}-\left(\frac{K_1(\beta_0)}{K_2(\beta_0)}\right)^2 +1+\frac{3}{\beta_0^2}.
\end{align*}
Then, we have from Lemma \ref{J^0} and \eqref{|q|^2J0} that
	\begin{align*}
\langle e_5,e_5\rangle_q&=-\frac{1}{\left\{\widetilde{e}\right\}^{\prime}(\beta_0)}\int_{\mathbb{R}^3}(q^0-e_0)^2J^0dq\cr
		&=-\frac{1}{\left\{\widetilde{e}\right\}^{\prime}(\beta_0)}\left(\int_{\mathbb{R}^3}\left(1+|q|^2\right)J^0dq-2e_0\int_{\mathbb{R}^3}q^0J^0dq+e_0^2\int_{\mathbb{R}^3}J^0dq\right)\cr
		&=-\frac{1}{\left\{\widetilde{e}\right\}^{\prime}(\beta_0)}\left\{1+\frac{12}{\beta^2_0}+\frac{3}{\beta_0}\frac{K_1(\beta_0)}{K_2(\beta_0)}- \left(\frac{K_1(\beta_0)}{K_2(\beta_0)}+\frac{3}{\beta_0}\right)^2 \right\}\cr
		&=1.	\end{align*}
$\bullet$ $\langle e_i,e_j\rangle_q=0$ $(i\neq j)$: The orthogonality can be proved in a similar manner. We omit the proof.
	
\end{proof}
We are ready to prove the dissipative property of $L$.
\begin{proposition}\label{pro}
	The linearized operator $L:=P-I $ satisfies the following properties:
	\begin{enumerate}
		\item $Ker(L)=N$.
		\item $L$ is dissipative in the following sense:
		$$
		\langle L(f),f\rangle_q= -\|\{I-P\}f\|^2_{L^2_q} \le 0.
		$$
	\end{enumerate}
\end{proposition}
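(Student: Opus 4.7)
The plan is to deduce both conclusions directly from Lemma \ref{ortho}, which already provides the key structural fact that $P$ is an orthogonal projection onto the five-dimensional subspace $N$ spanned by the orthonormal family $\{e_i\}_{i=1}^5$. Once this is in hand, the proposition is essentially a piece of elementary Hilbert-space geometry, so no further heavy computation is needed.

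For part $(1)$, I would unpack the definition $L=P-I$. An element $f\in L^2_q$ lies in $\mathrm{Ker}(L)$ iff $P(f)=f$. Since $P$ is a projection with range $N$, the fixed-point set of $P$ coincides with $N$: if $f\in N$ then $f=\sum_i \langle f,e_i\rangle_q e_i = P(f)$ by the orthonormality of $\{e_i\}$ established in Lemma \ref{ortho}; conversely $P(f)=f$ forces $f\in \mathrm{Range}(P)\subset N$. This gives $\mathrm{Ker}(L)=N$.

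For part $(2)$, I would use the orthogonal decomposition $f = P(f) + (I-P)(f)$. From $P^2=P$ (which follows from $\{e_i\}$ being orthonormal) and from $P$ being symmetric with respect to $\langle \cdot,\cdot\rangle_q$ (also immediate from the explicit formula \eqref{Pf} since each summand is of the form $\langle f,e_i\rangle_q e_i$), the two components $P(f)$ and $(I-P)(f)$ are orthogonal in $L^2_q$. Then
\begin{align*}
\langle L(f),f\rangle_q
&= \langle P(f)-f,\,P(f)+(I-P)(f)\rangle_q \\
&= -\langle (I-P)(f),\,P(f)+(I-P)(f)\rangle_q \\
&= -\|(I-P)(f)\|_{L^2_q}^2,
\end{align*}
using orthogonality to kill the cross-term. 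Non-positivity is then obvious.

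There is no real obstacle here: the only thing that has to be checked carefully is that the explicit expression \eqref{Pf} for $P$ is genuinely the orthogonal projection onto $N$, i.e.\ that it is symmetric and idempotent; both properties follow at once from the orthonormality of $\{e_i\}$ proved in Lemma \ref{ortho}, so I would simply cite that lemma and carry out the two-line computations above.
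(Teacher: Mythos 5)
Your proof is correct and follows essentially the same route as the paper: both deduce $(1)$ directly from $P$ being a projection onto $N$, and both prove $(2)$ by noting that orthonormality of $\{e_i\}$ forces $\langle P(f),(I-P)(f)\rangle_q=0$ and then expanding $\langle L(f),f\rangle_q=-\langle (I-P)f,\,P(f)+(I-P)f\rangle_q$. The only difference is cosmetic—you spell out the fixed-point characterization of $\mathrm{Ker}(L)$ and mention idempotence and symmetry explicitly, whereas the paper leaves $(1)$ as immediate and proves the orthogonality relation by a short direct calculation.
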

\begin{proof}
	(1) follows from the definition of $P$. To prove (2), we use the orthonormality of $P$ to see that
	\begin{align*}
	\langle P(f),\{I-P\}(f)\rangle_q&=	\langle P(f),f\rangle_q-	\langle P(f),P(f)\rangle_q\cr
	&=\sum_{i=1}^5 \left|\langle f, e_i\rangle_q\right|^2-\left\langle \sum_{i=1}^5 \langle f, e_i\rangle_qe_i,~\sum_{j=1}^5 \langle f, e_j\rangle_q e_j  \right\rangle_q\cr
	&=0.
	\end{align*}
Thus, we have
	\begin{align*}
	\langle L(f),f\rangle_q&=	-\langle \{I-P\}(f),~P(f)+\{I-P\}(f)\rangle_q\cr
	&=-\langle \{I-P\}(f),~ \{I-P\}(f)\rangle_q\cr
	&=-\|\{I-P\}(f)\|^2_{L^2_q}.
	\end{align*}


\end{proof}
\bigskip
\section{Estimates on the macroscopic fields and the nonlinear terms}

In this section, we study the estimates on the nonlinear perturbation $\Gamma$ necessary for local in time existence and energy estimates. We start with the estimates of the macroscopic fields.
\subsection{Estimates of macroscopic fields} We first need to estimate $\Psi, \Psi_1,\Phi$ and $\Phi_1$ whose definition is given in \eqref{notation2}.
\begin{lemma}\label{lem22}
Suppose $E(f)(t)$ is sufficiently small. Then $\Psi, \Psi_1,\Phi$ and $\Phi_1$ satisfy
\begin{enumerate}
	\item $\displaystyle |\partial^\alpha \Psi|+|\partial^\alpha \Phi|\le C(1+q^0)\sum_{|k|\le|\alpha|}\|\partial^k f\|_{L^2_q}.$
	\item $\displaystyle |\partial^\alpha \Psi_1|+|\partial^\alpha \Phi_1|\le C(1+q^0)\sqrt{E(t)}\sum_{|k|\le|\alpha|}\|\partial^k f\|_{L^2_q} .$
\end{enumerate}
\end{lemma}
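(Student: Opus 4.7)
The plan is to reduce every quantity on the left-hand side to a sum of products of scalar moments of $f$ against $\sqrt{J^0}$, and then estimate these using Cauchy--Schwarz in $q$ combined with the Sobolev embedding $H^2(\mathbb{T}^3)\hookrightarrow L^\infty(\mathbb{T}^3)$, which is available because $N\ge 4$. The basic tool is the pointwise moment bound
\[
\left|\int_{\mathbb{R}^3}\partial^k f\sqrt{J^0}\,dq\right|+\left|\int_{\mathbb{R}^3}q^i\partial^k f\sqrt{J^0}\,\frac{dq}{q^0}\right|\le C\|\partial^k f\|_{L^2_q},
\]
obtained from Cauchy--Schwarz in $q$ using $\int J^0\,dq=1$ and $|q^i|/q^0\le 1$, together with the Sobolev upgrade
\[
\sup_{x\in\mathbb{T}^3}\left|\int_{\mathbb{R}^3}\partial^k f(t,x,q)\sqrt{J^0}\,dq\right|\le C\sum_{|j|\le|k|+2}\|\partial^j f\|_{L^2_{x,q}}\le C\sqrt{E(t)},
\]
valid whenever $|k|+2\le N$, obtained by commuting $x$-derivatives past the $q$-integral.

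For $\Psi$, the decomposition $\Psi=2\int f\sqrt{J^0}\,dq+\Psi_1$ reduces part (1) to the stronger bound of part (2) for $\Psi_1$, up to a linear piece dispatched immediately by the moment bound. Since $\Psi_1$ is a finite sum of products of two scalar moments of $f$, Leibniz writes $\partial^\alpha\Psi_1$ as a sum of bilinear products in $\partial^{k_1}f$ and $\partial^{k_2}f$ with $k_1+k_2=\alpha$. In each product one of $|k_1|,|k_2|$ is at most $|\alpha|/2\le N/2\le N-2$, so the corresponding factor is placed in $L^\infty_x$ via the Sobolev upgrade to yield a factor of $\sqrt{E(t)}$, while the other factor is kept as $\|\partial^{k_\star}f\|_{L^2_q}$ by the elementary moment bound. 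Summing over partitions gives the desired inequality for $\Psi_1$, and combining with the linear piece completes part (1) for $\Psi$.

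For $\Phi$ and $\Phi_1$, I first establish the auxiliary pointwise estimate $|\partial^\alpha u|(t,x)\le C\sum_{|k|\le|\alpha|}\|\partial^k f\|_{L^2_q}$ by writing $u=n^{-1}\int q f\sqrt{J^0}\,dq/q^0$ and using that $1/n=1/\sqrt{1+\Psi}$ is a real-analytic function of $\Psi$ near zero, so that Fa\`a di Bruno expresses $\partial^{k_1}(1/n)$ as a polynomial in $\{\partial^k\Psi\}_{|k|\le|k_1|}$, each term already controlled by part (1). From $\Phi=(u^0-1)q^0-u\cdot q$, $u^0-1=|u|^2/(u^0+1)$, and $|q|\le q^0\le 1+q^0$, part (1) for $\Phi$ follows. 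For $\Phi_1$, the cancellation
\[
\Phi_1=(u^0-1)q^0+\frac{n-1}{n}\sum_{i=1}^3 q^i\int_{\mathbb{R}^3}q^i f\sqrt{J^0}\,\frac{dq}{q^0}
\]
(obtained from $u^i=n^{-1}\int q^if\sqrt{J^0}\,dq/q^0$) exposes both summands as quadratic in $f$: the first through $(u^0-1)$ being quadratic in $u$, the second through $n-1=\sqrt{1+\Psi}-1$ having the size of $\Psi$ and being multiplied by a further linear moment of $f$. The Sobolev--Leibniz product argument used for $\Psi_1$ then produces the $\sqrt{E(t)}$ factor. The only real obstacle is the chain-rule bookkeeping for the smooth nonlinear functions $1/n$, $\sqrt{1+|u|^2}$, and so on; this is routine via Fa\`a di Bruno, the smallness of $E(t)$ keeping everything in a disk of real analyticity and allowing the induction on $|\alpha|$ to close cleanly.
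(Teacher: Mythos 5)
Your proposal is correct and follows essentially the same route as the paper: bound the $q$-moments of $\partial^k f$ pointwise in $x$ by Cauchy--Schwarz against $\sqrt{J^0}$, distribute derivatives by Leibniz (with Fa\`a di Bruno, or equivalently the paper's generic-polynomial notation, handling $1/n=(1+\Psi)^{-1/2}$), and upgrade lower-order factors to $\sqrt{E(t)}$ via $H^2(\mathbb{T}^3)\hookrightarrow L^\infty(\mathbb{T}^3)$. One nice bonus in your write-up is the explicit identity $\Phi_1=(u^0-1)q^0+\frac{n-1}{n}\sum_i q^i\int q^i f\sqrt{J^0}\,dq/q^0$, which makes the quadratic smallness of $\Phi_1$ transparent; the paper leaves that case to the reader with the phrase ``in the same manner.''
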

\begin{proof}
$\bullet$ Estimates of $\Psi$ and $\Psi_1$: From H\"{o}lder inequality, we have
	\begin{align}\label{Psi}
	\begin{split}
|\partial^\alpha \Psi|&=\left|\partial^\alpha \left\{2\int_{\mathbb{R}^3} f\sqrt{J^0}\,dq+\left(\int_{\mathbb{R}^3} f\sqrt{J^0}\,dq\right)^2-\sum_{i=1}^3\left(\int_{\mathbb{R}^3}f\sqrt{J^0}q^i\,\frac{dq}{q^0}\right)^2 \right\}\right|\cr
			&\le 2\|\partial^\alpha f\|_{L^2_q}+ C\sum_{|k|\le |\alpha|}\|\partial^k f\|_{L^2_q}\|\partial^{\alpha-k}f\|_{L^2_q}.
\end{split}	\end{align}
Then we apply the Sobolev embedding $H^2(\mathbb{T}^{3})\subseteq L^{\infty}(\mathbb{T}^{3})$ to lower order terms to get
\begin{align}\label{H}
	\begin{split}
|\partial^\alpha \Psi| &\le 2\|\partial^\alpha f\|_{L^2_q}+C\sqrt{E(t)}\sum_{|k|\le |\alpha|}\|\partial^k f\|_{L^2_q}\le C\sum_{|k|\le |\alpha|}\|\partial^k f\|_{L^2_q}.
\end{split}\end{align}
Similarly, we have
	\begin{align}\label{Psi1}
	\begin{split}
		|\partial^\alpha \Psi_1|&=\left|\partial^\alpha \left\{\left(\int_{\mathbb{R}^3} f\sqrt{J^0}\,dq\right)^2-\sum_{i=1}^3\left(\int_{\mathbb{R}^3}f\sqrt{J^0}q^i\,\frac{dq}{q^0}\right)^2 \right\}\right|\cr
		&\le C\sum_{|k|\le|\alpha|}\|\partial^k f\|_{L^2_q}\|\partial^{\alpha-k} f\|_{L^2_q}\cr
		&\le C\sqrt{E(t)}\sum_{|k|\le |\alpha|}\|\partial^k f\|_{L^2_q}.
\end{split} \end{align}
$\bullet$ Estimates of $\Phi$: We observe that
\begin{align*}
	\begin{split}
	\Phi=u^\mu q_\mu-q^0=\frac{1}{n}\int_{\mathbb{R}^3}q^\mu F\,\frac{dq}{q^0} q_\mu -q^0.
\end{split}\end{align*} 	
Inserting \eqref{1/n=} and $F=J^0+f\sqrt{J^0}$, we have
	\begin{align*}
	\Phi&=-\sum_{i=1}^3q^i \int_{\mathbb{R}^3} f\sqrt{J^0} q^i\,\frac{dq}{q^0}-q^0\left(\int_{\mathbb{R}^3}f\sqrt{J^0} \,dq\right)^2+\sum_{i=1}^3q^i \int_{\mathbb{R}^3} f\sqrt{J^0} q^i\,\frac{dq}{q^0}\int_{\mathbb{R}^3}f\sqrt{J^0} \,dq\cr
	&-\frac{1}{2}\left\{\left(\int_{\mathbb{R}^3} f\sqrt{J^0}dq\right)^2-\sum_{i=1}^3\left(\int_{\mathbb{R}^3}f\sqrt{J^0}q^i\frac{dq}{q^0}\right)^2
	+\frac{\Psi^3-3\Psi^2}{(2+\Psi-\Psi^2+2\sqrt{1+\Psi})} \right\}\cr
	&\quad\times\left(q^0+ 	q^0\int_{\mathbb{R}^3}f\sqrt{J^0} \,dq-\sum_{i=1}^3q^i \int_{\mathbb{R}^3} f\sqrt{J^0} q^i\,\frac{dq}{q^0} \right).
\end{align*}
Then it follows from \eqref{H} that
\begin{align*}
	|\Phi|&\le C(1+q^0)\left(\|f\|_{L^2_q}+\|f\|_{L^2_q}^2\right)+C\left(\|f\|_{L^2_q}^2
	+\frac{\|f\|_{L^2_q}^3+\|f\|_{L^2_q}^2}{2-\|f\|_{L^2_q}-\|f\|_{L^2_q}^2}\right)q^0\left(1+\|f\|_{L^2_q}\right)\cr
	&\le C(1+q^0)\|f\|_{L^2_q}.
\end{align*}	
For $\alpha\neq0$, we observe that
\begin{align*}
\partial^\alpha \Phi&= \sum_{|k|\le|\alpha|}\partial^k\left\{\frac{1}{n} \right\}\partial^{\alpha-k}\left\{\int_{\mathbb{R}^3}q^\mu F\,\frac{dq}{q^0}q_\mu\right\}\cr
&=\partial^\alpha\left\{\frac{1}{n} \right\}\left\{q^0\left(e_0+\int_{\mathbb{R}^3}f\sqrt{J^0}\,dq\right)+q\cdot \int_{\mathbb{R}^3}qf\sqrt{J^0}\,\frac{dq}{q^0}\right\}\cr
&+ \sum_{|k|<|\alpha|}\partial^k\left\{\frac{1}{n} \right\}\int_{\mathbb{R}^3}q^\mu \partial^{\alpha-k}f\sqrt{J^0}\,\frac{dq}{q^0}q_\mu,
 \end{align*}
yielding
\begin{align}\label{partial phi}
	\begin{split}
\left|		\partial^\alpha \Phi\right|&\le C  q^0\left|\partial^\alpha\left\{\frac{1}{n} \right\}\right|\left( 1+\|f\|_{L^2_q}\right)+ Cq^0\sum_{|k|<|\alpha|}\left|\partial^k\left\{\frac{1}{n} \right\}\right|\|\partial^{\alpha-k}f\|_{L^2_q}.
\end{split}\end{align}
To estimate $1/n$, we recall \eqref{n=} and \eqref{route pi} to write
\begin{align*}
	\frac{1}{n}&=\frac{1}{\sqrt{1+\Psi}}
\end{align*}
so that
	\begin{align*}
		\partial^k\left\{\frac{1}{\sqrt{1+\Psi}}\right\}&=\frac{\mathbb{P}(\Psi,\partial \Psi,\cdots,\partial^k \Psi)}{\left(1+\Psi\right)^{2^{|\alpha|}-\frac{1}{2}}}
	\end{align*}
for some homogeneous generic polynomial $\mathbb{P}$. Then an explicit computation using \eqref{Psi} gives
\begin{equation}\label{1/1+pi}
		\left|\partial^k\left\{\frac{1}{\sqrt{1+\Psi}}\right\}\right|\le C\sum_{|l|\le |k|}\|\partial^l f\|_{L^2_q}.
	\end{equation}
Substituting \eqref{1/1+pi} into \eqref{partial phi} leads to
\begin{align}\label{leads to}
	|\partial^\alpha \Phi|\le Cq^0\sum_{|k|\le |\alpha|}\|\partial^k f\|_{L^2_q}.
\end{align}
\newline
$\bullet$ Estimates of $\Phi_1$: In the same manner with $\Phi$, we can obtain
\begin{align}\label{obtain}
		|\partial^\alpha \Phi_1|\le C\sqrt{E(t)}\sum_{|k|\le |\alpha|}\|\partial^k f\|_{L^2_q}.
\end{align}
Now, (1) follows from  \eqref{Psi}, \eqref{leads to}, and combining \eqref{Psi1} and \eqref{obtain} gives (2).	
\end{proof}

The following two lemmas give the desired estimates for macroscopic fields.
\begin{lemma}\label{lem2} 	Suppose $E(t)$ is sufficiently small. Then we have
\begin{enumerate}
	\item $	\displaystyle|\partial^\alpha \{n -1\}|+|\partial^\alpha u |+|\partial^\alpha \{e -e_0\}|\le C\sum_{|k|\le|\alpha|}\|\partial^k f\|_{L^2_q}.$
	\item $\displaystyle|\partial^{\alpha} \{u ^0-1\}|\le C\sqrt{E(t)}\sum_{|k|\le|\alpha|}\|\partial^k f\|_{L^2_q}.$
\end{enumerate}

\end{lemma}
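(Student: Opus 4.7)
The plan is to exploit the explicit representations of $n$, $u$, and $e-e_0$ that were already derived in Section 3, combined with the pointwise derivative bounds of Lemma \ref{lem22} for $\Psi,\Psi_1,\Phi,\Phi_1$, and the standard Sobolev embedding $H^2(\mathbb{T}^3)\subseteq L^\infty(\mathbb{T}^3)$ to place lower-order factors in $L^\infty_x$.

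For $n-1$, I would start from $n=\sqrt{1+\Psi}$ (see \eqref{n=}) and use the identity $n-1=\frac{\Psi}{2}-\frac{\Psi^2}{2(2+\Psi+2\sqrt{1+\Psi})}$ from \eqref{route pi}. Applying $\partial^\alpha$ and an induction on $|\alpha|$ (mirroring the computation leading to \eqref{1/1+pi}) yields
\begin{equation*}
\partial^\alpha(n-1)=\frac{\mathbb{P}(\Psi,\partial\Psi,\ldots,\partial^\alpha\Psi)}{(1+\Psi)^{2^{|\alpha|}-1/2}}
\end{equation*}
with $\mathbb{P}$ a homogeneous generic polynomial. Since $E(t)$ is small, $1+\Psi$ stays uniformly bounded away from zero, so distributing $L^\infty_x$ on the lower-order factors and applying Lemma \ref{lem22}(1) delivers the bound. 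For $u$, I would invoke the explicit formula \eqref{u_F}: the leading term $\int qf\sqrt{J^0}\,\frac{dq}{q^0}$ is controlled by $\|f\|_{L^2_q}$ via Cauchy--Schwarz using $|q|\sqrt{J^0}/q^0\in L^2_q$, while the bracketed correction is a smooth function of $\Psi$ that vanishes at $\Psi=0$, so its derivatives are governed by Lemma \ref{lem22}(1). For $e-e_0$, I would use \eqref{e-e0} and bound the linear term $\int(q^0-e_0)f\sqrt{J^0}\,dq$ directly by Cauchy--Schwarz, while the remainder $R_{I_2}$ from \eqref{e-e3} is a finite sum of products of moment integrals of $f\sqrt{J^0}$ against powers of $q^0$ and $\Phi$ divided by $\sqrt{1+\Psi}$ to some positive power, each factor being handled by Lemma \ref{lem22}.

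For part (2), the key observation is the constraint $u^\mu u_\mu=1$, i.e.\ $u^0=\sqrt{1+|u|^2}$, which gives
\begin{equation*}
u^0-1=\frac{|u|^2}{1+\sqrt{1+|u|^2}}.
\end{equation*}
Since the numerator is \emph{quadratic} in $u$, any derivative $\partial^\alpha(u^0-1)$ produces, via Leibniz, a finite sum in which every term contains at least one undifferentiated or low-order-differentiated copy of a component of $u$. Placing that copy in $L^\infty_x$ via Sobolev embedding and invoking part (1) to bound it by $\sqrt{E(t)}$ yields the extra smallness factor claimed.

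The routine part is the induction on $|\alpha|$ to describe $\partial^\alpha(n-1)$, and the moment integrals which all converge because of the exponential decay of $\sqrt{J^0}$ in $q^0$. The main bookkeeping obstacle is the $u^0-1$ estimate: one has to ensure that in \emph{every} term produced by differentiating the nonlinear function $|u|^2/(1+\sqrt{1+|u|^2})$ at least one copy of $u$ (or a derivative thereof, suitably small) can be absorbed into $\sqrt{E(t)}$, so the resulting bound is quadratic rather than linear. This is expected since $u^0-1$ is genuinely $O(|u|^2)$ near $u=0$, but it must be tracked carefully through Fa\`a di Bruno.
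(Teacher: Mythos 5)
Your proposal is correct and matches the paper's proof essentially step by step: $n-1$ via $\sqrt{1+\Psi}$ and \eqref{route pi} with a generic-polynomial/Fa\`a di Bruno expansion controlled by Lemma \ref{lem22}; $u$ via the $\frac{1}{\sqrt{1+\Psi}}\int qF\,\frac{dq}{q^0}$ representation (the paper works with this form directly rather than the re-expanded \eqref{u_F}, but the two are equivalent) together with \eqref{1/1+pi}; $e-e_0$ via \eqref{e-e0}; and the $\sqrt{E(t)}$-gain for $u^0-1$ from its quadratic vanishing in $u$, with Sobolev embedding absorbing a low-order factor. The only difference is the algebraic form you chose for $u^0-1$, namely $|u|^2/(1+\sqrt{1+|u|^2})$, versus the paper's equivalent \eqref{route pi}-based expression $\frac{|u|^2}{2}-\frac{|u|^4}{2(2+|u|^2+2\sqrt{1+|u|^2})}$; both manifest the same quadratic structure and lead to the same bound.
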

\begin{proof}
		$\bullet$ $\partial^\alpha \{n -1\}$ : It follows from \eqref{n=}, \eqref{route pi} and \eqref{H} that
	\begin{align*}
		\left|n -1\right| &= \left|\frac{\Psi}{2}-\frac{\Psi^2}{2(2+\Psi+2\sqrt{1+\Psi})}\right|\cr
		&\le C\|f\|_{L^2_q}+C\frac{\|f\|_{L^2_q}^2}{2(2-\|f\|_{L^2_q}+2\sqrt{1-\|f\|_{L^2_q}})}\cr
		& \le C\|f\|_{L^2_q}.
	\end{align*}
	For $\alpha\neq 0$, we write
	\begin{align*}
		\begin{split}
	\partial^\alpha \{n -1\}&=\partial^\alpha\left\{\sqrt{1+\Psi}\right\}=\frac{\mathbb{P}( \Psi,\partial \Psi,\cdots \partial^\alpha \Psi)}{\left(\sqrt{1+\Psi}\right)^{2^{|\alpha|}-1}}
\end{split}
\end{align*}
for some homogeneous generic polynomial $\mathbb{P}$.
Then, it is straightforward from \eqref{H} that
	\begin{align*}
\left|	\partial^\alpha \{n -1\}\right| 	&\le C\sum_{|k|\le |\alpha|}\|\partial^kf\|_{L^2_q}.
\end{align*}
	\noindent
$\bullet$ $\partial^\alpha u $ : The case of $\alpha=0$ follows similarly as in the above case:
	\begin{align*}
		|u |&=\left|\frac{1}{\sqrt{1+\Psi}}\int_{\mathbb{R}^3}q F\,\frac{dq}{q^0}\right|\leq  \frac{C\|  f\|_{L^2_q}}{\sqrt{1-\|f\|_{L^2_q}}}\leq C\|  f\|_{L^2_q}.
	\end{align*}
For the case $\alpha\neq 0$, we compute using \eqref{1/1+pi} as
\begin{align*}
|\partial^\alpha u |&= \left| \sum_{|k|\le|\alpha|}\partial^k\left\{\frac{1}{\sqrt{1+\Psi}} \right\}\int_{\mathbb{R}^3}\partial^{\alpha-k}\{F\}q\frac{dq}{q^0}  \right|\cr
&\le \sum_{|k|\le|\alpha|}\sum_{|l|\le |k|}\|\partial^l f\|_{L^2_q}\left| \int_{\mathbb{R}^3}\partial^{\alpha-k}\{f\}q\sqrt{J^0}\frac{dq}{q^0}  \right|\cr
&\le C \sum_{|k|\le |\alpha|}\sum_{|l|\le |k|}\|\partial^l f\|_{L^2_q}\|\partial^{\alpha-k}f\|_{L^2_q}.
\end{align*}
Applying $H^2(\mathbb{T}^{3})\subseteq L^{\infty}(\mathbb{T}^{3})$ to the lower order terms gives
\begin{align*}\
|\partial^\alpha u |&\le C\sum_{|k|\le|\alpha|}\|\partial^kf\|_{L^2_q}.
\end{align*}
\newline
	$\bullet$ $\partial^\alpha \{e -e_0\}$ : We observe from \eqref{e-e0} that $e-e_0$ takes the form of
	$$
		e-e_0= \int_{\mathbb{R}^3} \left(q^0-e_0\right)f\sqrt{J^0}\,dq -\int_{\mathbb{R}^3} f\sqrt{J^0}\,dq \int_{\mathbb{R}^3} q^0f\sqrt{J^0}\,dq+R_{I_2}(f) .
	$$
We notice that $R_{I_2}(f)$ consists entirely of the integrals for $f\sqrt{J^0}$, and can be estimated similarly as in the previous case. we omit the proof.
\noindent\newline
$\bullet$ $\partial^\alpha \{u^0 -1\}$ : We use \eqref{route pi} to express $u^0-1$ by
\begin{align*}
	u ^0-1&=\sqrt{1+|u |^2}-1\cr
	&=\frac{|u|^2}{2}-\frac{|u|^4}{2(2+|u|^2+2\sqrt{1+|u|^2})}.
\end{align*}
Then the previous result of $u$ gives
\begin{align*}
	|u ^0-1|&=\left|\frac{|u|^2}{2}-\frac{|u|^4}{2(2+|u|^2+2\sqrt{1+|u|^2})}\right|\cr
	&\le C\|f\|^2_{L^2_q}+\frac{C\|f\|_{L^2_q}^4}{8}\cr
	&\le C\sqrt{E(t)}\|f\|_{L^2_q}.
\end{align*}
Similarly, we have
\begin{align*}
\left|	\partial^\alpha \{u^0 -1\}\right|&=\left|\partial^\alpha \left\{\frac{|u |^2}{2}-\frac{|u |^4}{2(2+|u |^2+2\sqrt{1+|u|^2})} \right\}\right|\cr
	&\le C\sum_{|k|\le|\alpha|}|\partial^ku| |\partial^{\alpha-k}u| +\left|\frac{\mathbb{P}(\sqrt{1+|u ^2|},u ,\partial u ,\cdots, \partial^\alpha u )}{(2+|u |^2+2\sqrt{1+|u |^2})^{2^{|\alpha|}}(\sqrt{1+|u ^2|})^{2^{|\alpha|}-1}}\right|\cr
	&\le C\sqrt{E(t)}\sum_{|k|\le|\alpha|}\|\partial^k f\|_{L^2_q}.
\end{align*}

\noindent

\end{proof}
\begin{lemma}\label{U<}
Suppose $E(t)$ is sufficiently small. Then we have
\begin{enumerate}
	\item $\displaystyle|\partial^{\alpha} U|\le C\sum_{|k|\le|\alpha|}\|\partial^k f\|_{L^2_q}.$
		\item $\displaystyle|\partial^{\alpha}\{ U ^0-1\}|\le C\sqrt{E(t)}\sum_{|k|\le|\alpha|}\|\partial^k f\|_{L^2_q}.$
\end{enumerate}
		
\end{lemma}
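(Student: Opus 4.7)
The plan is to exploit the decomposition $U^\mu = u^\mu + \mathbf{q}^\mu/(nh)$ together with the explicit formulas for $\mathbf{q}$ and $nh$ already derived in \eqref{heat2} and \eqref{nh0}, and then reduce everything to what has already been estimated in Lemmas \ref{lem22} and \ref{lem2}.

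For part (1), I would first use the triangle inequality to split $|\partial^\alpha U|\le|\partial^\alpha u|+|\partial^\alpha(\mathbf{q}/(nh))|$. The first term is controlled directly by Lemma \ref{lem2}(1). For the second, note from \eqref{heat2} that $\mathbf{q}$ is a sum of terms each of which is either linear or of higher order in integrals of $f\sqrt{J^0}$ against polynomials in $q/q^0$; in particular $\mathbf{q}$ vanishes at $f=0$. On the other hand, \eqref{nh0} exhibits $nh$ as $\widetilde{h}(\beta_0)$ plus terms that are small when $E(t)$ is small (using Lemma \ref{lem22} and the bounds on $\Phi$), so $nh$ stays bounded away from zero and one can expand $1/(nh)$ analogously to the expansion of $1/\sqrt{1+\Psi}$ in \eqref{1/1+pi}. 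Applying the product and chain rules, one arrives at an expression whose every summand contains at least one factor of $\partial^{\alpha'}f$ ($|\alpha'|\le|\alpha|$) multiplied by derivatives of $\Psi$, $\Phi$, and $1/(nh)$; the lower-order factors are then absorbed via the Sobolev embedding $H^2(\mathbb{T}^3)\subseteq L^\infty(\mathbb{T}^3)$, yielding the claimed bound.

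For part (2), I would use the algebraic identity
\[
U^0-1=\sqrt{1+|U|^2}-1=\frac{|U|^2}{2+|U|^2+2\sqrt{1+|U|^2}},
\]
which is quadratic in $U$. For $\alpha=0$ this immediately gives $|U^0-1|\le C|U|^2\le C\sqrt{E(t)}\,\|f\|_{L^2_q}$ upon applying part (1) together with $H^2\subseteq L^\infty$ to one factor of $U$. For $\alpha\ne 0$, I would differentiate the quotient as in the proof of the analogous bound on $u^0-1$ in Lemma \ref{lem2}, writing
\[
\partial^\alpha\{U^0-1\}=\frac{\mathbb{P}\bigl(\sqrt{1+|U|^2},U,\partial U,\ldots,\partial^\alpha U\bigr)}{(2+|U|^2+2\sqrt{1+|U|^2})^{2^{|\alpha|}}(\sqrt{1+|U|^2})^{2^{|\alpha|}-1}}
\]
for a suitable homogeneous generic polynomial $\mathbb{P}$, with $\mathbb{P}(0)=0$ of degree $\ge 2$ in the variables $(U,\partial U,\ldots,\partial^\alpha U)$. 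Using part (1) to bound each factor of $\partial^k U$ by $\sum_{|j|\le|k|}\|\partial^j f\|_{L^2_q}$, and invoking $H^2\subseteq L^\infty$ on all but one factor, two such factors combine to produce the desired $\sqrt{E(t)}\sum_{|k|\le|\alpha|}\|\partial^k f\|_{L^2_q}$.

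The main technical obstacle is bookkeeping for part (1): the representation of $\mathbf{q}$ in \eqref{heat2} already contains the nonlinear term $\Gamma_3^*(f)$, which itself has summands quadratic in moments of $f$ and involves $\Phi,\Phi_1,\Psi$ through the $1/n$ expansion. One must check that after applying $\partial^\alpha$ and redistributing derivatives by Leibniz, every resulting term still carries a factor of the form $\partial^{\alpha'}f$ that can play the role of the unsummed highest-derivative factor, with the remaining factors being uniformly bounded in $L^\infty_x$ by $H^2$ embedding and the smallness of $E(t)$. Once this is verified term by term using Lemmas \ref{lem22} and \ref{lem2}, the linear bound claimed in (1) follows, and part (2) is then a quick corollary via the quadratic identity above.
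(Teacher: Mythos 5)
Your part (1) follows the same route the paper takes: split $U=u+\mathbf{q}/(nh)$, control $u$ via Lemma \ref{lem2}, read off a usable form of $\mathbf{q}$ from \eqref{heat2}, show that $nh$ stays near $\widetilde{h}(\beta_0)$ via \eqref{nh0}, and finish with Leibniz plus the Sobolev embedding $H^2(\mathbb{T}^3)\subseteq L^\infty(\mathbb{T}^3)$. This is correct and essentially identical to the paper.

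Part (2), however, rests on the assertion $U^0=\sqrt{1+|U|^2}$, and this is false for the $U^\mu$ of the paper. The paper defines $U^\mu=u^\mu+\mathbf{q}^\mu/(nh)$; the Eckart velocity satisfies $u^\mu u_\mu=1$ (so $u^0=\sqrt{1+|u|^2}$, which is what Lemma \ref{lem2}(2) exploits), but the heat flux obeys the orthogonality $u_\mu\mathbf{q}^\mu=0$, so
\[
U^\mu U_\mu = u^\mu u_\mu + \frac{2u_\mu \mathbf{q}^\mu}{nh} + \frac{\mathbf{q}^\mu \mathbf{q}_\mu}{(nh)^2} = 1 + \frac{\mathbf{q}^\mu \mathbf{q}_\mu}{(nh)^2} < 1
\]
whenever $\mathbf{q}\neq 0$, since $\mathbf{q}^\mu$ is spacelike. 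Hence $U^0\neq\sqrt{1+|U|^2}$, $U^0-1$ is not a function of $|U|^2$ alone, and your quotient formula for $\partial^\alpha\{U^0-1\}$ has no foundation. The real content of (2) is a separate estimate on the time component of the heat flux: the paper writes $U^0-1=(u^0-1)+\mathbf{q}^0/(nh)$, applies Lemma \ref{lem2}(2) to $u^0-1$, and for $\mathbf{q}^0$ inserts $F=J^0+f\sqrt{J^0}$ into $\mathbf{q}^0=\int_{\mathbb{R}^3}(u^\mu q_\mu)F\,dq-u^0\int_{\mathbb{R}^3}(u^\mu q_\mu)^2 F\,\frac{dq}{q^0}$, where the terms linear in $f$ cancel and one is left with $|\mathbf{q}^0|\le C\sqrt{E(t)}\|f\|_{L^2_q}$. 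That cancellation is the step your proposal misses. (A short alternative that preserves the spirit of your argument: use $u_\mu\mathbf{q}^\mu=0$ to get $\mathbf{q}^0=(u\cdot\mathbf{q})/u^0$, which is a product of two factors each already bounded by $\sum_{|k|\le|\alpha|}\|\partial^k f\|_{L^2_q}$, producing the extra $\sqrt{E(t)}$ directly; but the identity $U^0=\sqrt{1+|U|^2}$ cannot be used.)
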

\begin{proof}
$\bullet$ $\partial^\alpha U $: From the Cauchy-Schwartz inequality, we have
		\begin{align}\label{<uq<}
		\begin{split}
			1\le u^\mu q_\mu\le 2\sqrt{1+|u|^2}q^0.
		\end{split}
	\end{align}
Using this, we get
	\begin{align*}
|U|&=\left|u+3\frac{\int_{\mathbb{R}^3} q(u^\mu q_\mu)F\frac{dq}{q^0}-u\int_{\mathbb{R}^3} (u^\mu q_\mu)^2F\frac{dq}{q^0}}{4\int_{\mathbb{R}^3}(u^\mu q_\mu)^2F\frac{dq}{q^0}-\int_{\mathbb{R}^3}F\frac{dq}{q^0}}\right|\cr
&\le\left|u\right|+\frac{2\sqrt{1+|u|^2}\left|\int_{\mathbb{R}^3} qF\,dq\right|+4(1+|u|^2)\left|u\right|\int_{\mathbb{R}^3} q^0F\,dq}{\int_{\mathbb{R}^3}F\frac{dq}{q^0}}\cr
&=\left|u\right|+\frac{2\sqrt{1+|u|^2}\left|\int_{\mathbb{R}^3} qf\sqrt{J^0}\,dq\right|+4(1+|u|^2)\left|u\right|\left(e_0+\int_{\mathbb{R}^3} q^0f\sqrt{J^0}\,dq\right)}{\frac{K_1(\beta_0)}{K_2(\beta_0)}+\int_{\mathbb{R}^3}f\sqrt{J^0}\,\frac{dq}{q^0}}.
\end{align*}
We then apply Lemma \ref{lem2} to get
	\begin{align*}
 |U|&\le C\|f\|_{L^2_q}+C\frac{\big(1+\|f\|_{L^2_q}\big)\|f\|_{L^2_q}+\big(1+\|f\|_{L^2_q}\big)^3\|f\|_{L^2_q}}{\frac{K_1(\beta_0)}{K_2(\beta_0)}-\|f\|_{L^2_q}}\le C \|f\|_{L^2_q}.
\end{align*}
For the case of $\alpha\neq0$, we recall from \eqref{heat2} that
$$
\mathbf{q}=\int_{\mathbb{R}^3}  q f\sqrt{J^0}dq-\widetilde{h}(\beta_0)\int_{\mathbb{R}^3} qf\sqrt{J^0}\frac{dq}{q^0}+\Gamma_{3}^*(f)
$$
where $\Gamma_3^*(f)$ denotes
\begin{align*}
	\Gamma_{3}^*(f)	&=-\sum_{i=1}^3\int_{\mathbb{R}^3} q^if\sqrt{J^0}\frac{dq}{q^0}\int_{\mathbb{R}^3} q^i qf\sqrt{J^0}\frac{dq}{q^0}+\int_{\mathbb{R}^3} \Phi_1 qF\frac{dq}{q^0}\cr
	&  +e_0\int_{\mathbb{R}^3} q f\sqrt{J^0}\frac{dq}{q^0}\left\{\frac{\Psi}{2}+\frac{\Psi^3-3\Psi^2}{2(2+\Psi-\Psi^2+2\sqrt{1+\Psi})}\right\}\cr
	&  -\frac{1}{\sqrt{1+\Psi}}\left(\int_{\mathbb{R}^3} q^0 f\sqrt{J^0} dq+\int_{\mathbb{R}^3}\left(2q^0\Phi+\Phi^2 \right)F\frac{dq}{q^0} \right)\int_{\mathbb{R}^3} q f\sqrt{J^0}\frac{dq}{q^0}.
\end{align*}
Then, an explicit computation with Lemma \ref{lem22} and \eqref{1/1+pi} gives
$$
|\partial^\alpha \Gamma_{3}^*(f) |\le C\sum_{|k|\le |\alpha|}\|\partial^k f\|_{L^2_q}.
$$
Therefore,
\begin{equation}\label{q}
|\partial^\alpha \mathbf{q}|\le C\sum_{|k|\le |\alpha|}\|\partial^k f\|_{L^2_q}.
\end{equation}
On the other hand, we recall from \eqref{nh0} that
\begin{equation*}
nh=\widetilde{h}(\beta_0)+\frac{4}{3}\int_{\mathbb{R}^3} \left(2q^0\Phi+\Phi^2\right)J^0\frac{dq}{q^0}+\frac{4}{3}\int_{\mathbb{R}^3} ( u_{\alpha} q^\alpha)^2 f\sqrt{J^0}\frac{dq}{q^0}-\frac{1}{3}\int_{\mathbb{R}^3} f\sqrt{J^0}\frac{dq}{q^0},
\end{equation*}
which, together with Lemma \ref{lem22} and \eqref{<uq<}, leads to
\begin{equation}\label{nh1}
\left|nh-\widetilde{h}(\beta_0)\right|+ \left|\partial^\alpha \{nh\}\right|\le C\sum_{|k|\le |\alpha|}\|\partial^k f\|_{L^2_q}.
\end{equation}
Using this, we have
\begin{align}\label{q2}
	\begin{split}
	\left|\partial^{\alpha}\left\{\frac{1}{nh} \right\}\right|&=\left|\frac{\mathbb{P}\left(nh,\partial\{nh\},\cdots,\partial^{\alpha}\{nh\}\right)}{(nh)^{2^{|\alpha|}}}\right|\cr
	&\le C\sum_{|k|\le |\alpha|}\frac{\|\partial^kf\|_{L^2_q} }{\left(\widetilde{h}(\beta_0)-C\|f\|_{L^2_q}\right)^{2^{|\alpha|}}}\cr
	&\le  C\sum_{|k|\le |\alpha|} \|\partial^kf\|_{L^2_q}.
\end{split}\end{align}
 Combining \eqref{q} and \eqref{q2} gives
\begin{equation}\label{q<}
\left|\partial^\alpha \left\{\frac{\mathbf{q}}{nh}\right\}\right|\le C \sum_{|k|\le|\alpha|}\|\partial^k f\|_{L^2_q}.
\end{equation}
Therefore we have from Lemma \ref{lem2} and \eqref{q<} that
	\begin{align*}
|\partial^\alpha U|&=\left|\partial^\alpha u+\partial^\alpha \left\{\frac{\mathbf{q}}{nh}\right\}\right|\le C \sum_{|k|\le|\alpha|}\|\partial^k f\|_{L^2_q}.
\end{align*}
\newline
$\bullet$ $\partial^\alpha \{U^0-1\}$ : For $\alpha=0$, we observe that
\begin{align*}
	\mathbf{q}^0&=\int_{\mathbb{R}^3} (u^\mu q_\mu)Fdq-u^0\int_{\mathbb{R}^3} (u^\mu q_\mu)^2F\frac{dq}{q^0}\cr
	&=u^0\int_{\mathbb{R}^3}q^0Fdq-u\cdot \int_{\mathbb{R}^3}qFdq-u^0\int_{\mathbb{R}^3}\left(u^0q^0-u\cdot q\right)^2F\frac{dq}{q^0}.
\end{align*}	
Inserting $F=J^0+f\sqrt{J^0}$, a direct computation gives
\begin{align*}
	\mathbf{q}^0&=u^0\left(e_0+\int_{\mathbb{R}^3}q^0 f\sqrt{J^0}dq\right)-u\cdot \int_{\mathbb{R}^3}qf\sqrt{J^0}dq\cr
	&-(u^0)^3\left(e_0+\int_{\mathbb{R}^3}q^0f\sqrt{J^0}dq\right)+2(u^0)^2u\cdot\int_{\mathbb{R}^3}qf\sqrt{J^0}dq-u^0\int_{\mathbb{R}^3}(u\cdot q)^2F\frac{dq}{q^0}\cr
	&=-e_0u^0|u|^2+u^0\int_{\mathbb{R}^3}q^0 f\sqrt{J^0}dq-u\cdot \int_{\mathbb{R}^3}qf\sqrt{J^0}dq\cr
	&-(u^0)^3\int_{\mathbb{R}^3}q^0f\sqrt{J^0}dq+2(u^0)^2u\cdot\int_{\mathbb{R}^3}qf\sqrt{J^0}dq-u^0\int_{\mathbb{R}^3}(u\cdot q)^2F\frac{dq}{q^0},
\end{align*}
which, together with Lemma \ref{lem2}, gives
 \begin{align}\label{q0}
\begin{split}
	 	|\mathbf{q}^0|&\le C\left(\|f\|_{L^2_q}^2+\|f\|_{L^2_q}^3+\|f\|_{L^2_q}^4\right)
 	\le C\sqrt{E(t)}\|f\|_{L^2_q}.
\end{split} \end{align}
Combining Lemma \ref{lem2}, \eqref{nh1} and \eqref{q0}, we conclude that
	\begin{align*}
	|U^0-1|&=\left|u^0-1+\frac{\mathbf{q}^0}{nh}\right|\le C \sqrt{E(t)}\|f\|_{L^2_q}.
	\end{align*}
We omit the proof for $\alpha\neq 0$ to avoid the repetition.
\end{proof}

\subsection{Estimates of nonlinear perturbation $\Gamma$} We now estimate the nonlinear term. We first need to clarify the explicit form of the nonlinear perturbation $\Gamma_5(f)$.
 \begin{lemma}\label{Q} We have
	$$
	D_{n_{\theta},U_\theta^0,U_{\theta},e_{\theta}}^{2}J(\theta)=\mathcal{Q}J(\theta),
	$$
where $\mathcal{Q}$ is a $6\times6$ matrix whose elements are given by
\begin{align*}
&\mathcal{Q}_{1,1}=0,\quad \mathcal{Q}_{1,2}=-\frac{\beta_\theta}{n_{\theta}}q^0,\quad  	\mathcal{Q}_{1,i+2}=\frac{\beta_\theta}{n_{\theta}}q^i,\quad \mathcal{Q}_{1,6}=-\frac{1}{n_{\theta}\left\{\widetilde{e}\right\}^{\prime}(\beta_\theta)}\biggl(\frac{M^{\prime}(\beta_\theta)}{M(\beta_\theta)}+U_{\theta}^{\mu}q_{\mu}\biggl),  \cr
&\mathcal{Q}_{2,2}= \beta_\theta^2 (q^0)^2, \quad \mathcal{Q}_{2,i+2}=-\beta_\theta^2q^0q^i,\quad \mathcal{Q}_{2,6}=-\frac{1}{\left\{\widetilde{e}\right\}^{\prime}(\beta_\theta)} q^0+\frac{(\widetilde{e})^{-1}(e_\theta)}{\left\{\widetilde{e}\right\}^{\prime}(\beta_\theta)} q^0\biggl(\frac{M^{\prime}(\beta_\theta)}{M(\beta_\theta)}+U_{\theta}^{\mu}q_{\mu}\biggl),\cr
&\mathcal{Q}_{i+2,j+2}=\beta_\theta^2q^iq^j,\quad \mathcal{Q}_{i+2,6}=\frac{1}{\left\{\widetilde{e}\right\}^{\prime}(\beta_\theta)} q^i-\frac{(\widetilde{e})^{-1}(e_\theta)}{\left\{\widetilde{e}\right\}^{\prime}(\beta_\theta)} q^i\biggl(\frac{M^{\prime}(\beta_\theta)}{M(\beta_\theta)}+U_{\theta}^{\mu}q_{\mu}\biggl),\cr &\mathcal{Q}_{6,6}=-\left\{(\widetilde{e})^{-1}\right\}^{\prime\prime}(e_\theta)\biggl(\frac{M^{\prime}(\beta_\theta)}{M(\beta_\theta)}+U_{\theta}^{\mu}q_{\mu}\biggl)+\frac{1}{\left(\left\{\widetilde{e}\right\}^{\prime}(\beta_\theta)\right)^2}\biggl(\frac{M^{\prime}(\beta_\theta)}{M(\beta_\theta)}+ U_{\theta}^{\mu}q_{\mu}\biggl)^{2}\cr
		&\hspace{1cm} -\frac{1}{\left(\left\{\widetilde{e}\right\}^{\prime}(\beta_\theta)\right)^2}\biggl(\frac{M^{\prime\prime}(\beta_\theta)M(\beta_\theta)-\left\{M^{\prime}(\beta_\theta)\right\}^{2}}{M^{2}(\beta_\theta)}\biggl),
	\end{align*}
for $i,j=1,2,3$.
\end{lemma}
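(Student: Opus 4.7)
The plan is to compute the Hessian of $J(\theta)=\frac{n_\theta}{M(\beta_\theta)}\exp\{-\beta_\theta U_\theta^\mu q_\mu\}$ entrywise by differentiating the first partials that were already recorded in the proof of Lemma \ref{lin2}. In the variables $(n_\theta,U_\theta^0,U_\theta^1,U_\theta^2,U_\theta^3,e_\theta)$ indexed $1,\dots,6$, these first partials are
\begin{align*}
\frac{\partial J(\theta)}{\partial n_\theta}=\frac{J(\theta)}{n_\theta},\qquad \frac{\partial J(\theta)}{\partial U_\theta^0}=-\beta_\theta q^0 J(\theta),\qquad \frac{\partial J(\theta)}{\partial U_\theta^i}=\beta_\theta q^i J(\theta),\\
\frac{\partial J(\theta)}{\partial e_\theta}=-\frac{1}{\{\widetilde{e}\}'(\beta_\theta)}\Bigl(\frac{M'(\beta_\theta)}{M(\beta_\theta)}+U_\theta^\mu q_\mu\Bigr)J(\theta),
\end{align*}
where the factor $1/\{\widetilde{e}\}'(\beta_\theta)$ arises from the inverse function theorem $\{(\widetilde{e})^{-1}\}'(e_\theta)=1/\{\widetilde{e}\}'(\beta_\theta)$ applied to $\beta_\theta=(\widetilde{e})^{-1}(e_\theta)$.

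Most second partials are then routine. For the $n_\theta$-row: since $J(\theta)$ is linear in $n_\theta$, $\mathcal{Q}_{1,1}=0$; the mixed entries $\mathcal{Q}_{1,j}$ for $j\ge 2$ are obtained by dividing the other first partials by $n_\theta$. For the $U_\theta^\mu$-rows with $\mu,\nu\in\{0,1,2,3\}$, differentiating $\beta_\theta(\pm q^\mu)J(\theta)$ in another $U_\theta^\nu$ brings down another factor $\beta_\theta(\pm q^\nu)$; $\beta_\theta$ itself does not depend on $U_\theta$, so this yields the quadratic entries $\mathcal{Q}_{2,2}=\beta_\theta^2(q^0)^2$, $\mathcal{Q}_{2,i+2}=-\beta_\theta^2q^0q^i$, and $\mathcal{Q}_{i+2,j+2}=\beta_\theta^2q^iq^j$. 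For the mixed $(U_\theta^\mu,e_\theta)$ entries I differentiate $\beta_\theta(\pm q^\mu)J(\theta)$ in $e_\theta$, producing one term from $\partial\beta_\theta/\partial e_\theta=1/\{\widetilde{e}\}'(\beta_\theta)$ and one from $\partial J/\partial e_\theta$, which assembles into the stated expressions for $\mathcal{Q}_{2,6}$ and $\mathcal{Q}_{i+2,6}$ (recalling $\beta_\theta=(\widetilde{e})^{-1}(e_\theta)$).

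The main bookkeeping step is $\mathcal{Q}_{6,6}$. I would write $\partial J/\partial e_\theta = C(e_\theta)A(\beta_\theta)J(\theta)$ with $C(e_\theta)=-\{(\widetilde{e})^{-1}\}'(e_\theta)$ and $A(\beta_\theta)=M'(\beta_\theta)/M(\beta_\theta)+U_\theta^\mu q_\mu$, and apply the product rule. This produces three contributions: one with $C'(e_\theta)=-\{(\widetilde{e})^{-1}\}''(e_\theta)$ multiplying $A(\beta_\theta)J(\theta)$; one in which the chain rule applied to $M'(\beta_\theta)/M(\beta_\theta)$ contributes the quotient $[M''M-(M')^2]/M^2$ at $\beta_\theta$ together with a factor $1/\{\widetilde{e}\}'(\beta_\theta)$; and one from $C(e_\theta)A(\beta_\theta)\cdot\partial J/\partial e_\theta$ which, after using $\partial J/\partial e_\theta$ again, yields the $A(\beta_\theta)^2/\bigl(\{\widetilde{e}\}'(\beta_\theta)\bigr)^2$ term. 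Collecting these three pieces matches the stated formula for $\mathcal{Q}_{6,6}$ exactly.

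The only conceptual point, as opposed to mechanical differentiation, is the consistent use of the inverse function theorem for $\beta_\theta=(\widetilde{e})^{-1}(e_\theta)$ (which is legitimate thanks to Proposition \ref{beta decreasing2} and Lemma \ref{beta decreasing}). Everything else is the product and chain rules, so I do not expect any genuine obstacle; the task is just to record the six-by-six table cleanly and verify the symmetry $\mathcal{Q}_{i,j}=\mathcal{Q}_{j,i}$ as a consistency check on the arithmetic.
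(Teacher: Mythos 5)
Your computation is correct and is exactly the ``straightforward'' differentiation the paper omits (the paper's proof of Lemma~\ref{Q} is literally one line: ``The proof is straightforward. We omit it.''). You start from the first partials already recorded in the proof of Lemma~\ref{lin2}, apply the product and chain rules together with the inverse function theorem for $\beta_\theta=(\widetilde{e})^{-1}(e_\theta)$, and the six-by-six table falls out; I re-derived each entry, including the three-term bookkeeping for $\mathcal{Q}_{6,6}$, and all signs and factors check out against the statement. The only thing worth flagging is that the legitimacy of $\{(\widetilde{e})^{-1}\}'(e_\theta)=1/\{\widetilde{e}\}'(\beta_\theta)$ rests on the strict monotonicity of $\widetilde{e}$ from Lemma~\ref{beta decreasing}, and that $\{\widetilde{e}\}'(\beta_\theta)\neq 0$, which is exactly what that lemma (and the remark after~\eqref{Pf} that $-\{\widetilde{e}\}'(\beta_0)>0$) provides; you already note this, so the argument is complete.
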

\begin{proof}
The proof is straightforward. We omit it.
\end{proof}
\begin{lemma}\label{nonlinear1}	Suppose $E(t)$ is sufficiently small. Then we have
$$
	\left|\int_{\mathbb{R}^3}\partial^\alpha_\beta\Gamma (f) g \,dq\right|\le C\sqrt{E(t)}\sum_{|k|\le |\alpha|}\|\partial^k f\|_{L^2_q}\|g\|_{L^2_q}.
	$$
\end{lemma}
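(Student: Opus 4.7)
The plan is to decompose $\Gamma(f)$ into its seven summands (the five $\Gamma_i$ multiplied by $U_\mu q^\mu/q^0$, plus the two terms coming from $q^{-0}(\Phi + \mathbf{q}_\mu q^\mu/(nh))L(f)$), apply the Leibniz rule to $\partial^\alpha_\beta$, and estimate each resulting product separately. The crucial structural observation is that every term in $\Gamma_1,\ldots,\Gamma_5$ (and every coefficient in front of $L(f)$) is \emph{at least quadratic} in the macroscopic deviations $n-1$, $U^0-1$, $U$, $e-e_0$, $\Phi$, $\Phi_1$, $\Psi$, $\Psi_1$, or in $f$ itself. By Lemma~\ref{lem22}, Lemma~\ref{lem2}, and Lemma~\ref{U<}, every such factor admits a bound of the form $C(1+q^0)^a\sum_{|k|\le|\alpha|}\|\partial^k f\|_{L^2_q}$, and at least one factor in each product can be placed in $L^\infty_x$ via the Sobolev embedding $H^2(\mathbb{T}^3)\subset L^\infty(\mathbb{T}^3)$, since $|\alpha|\le N$ and $N\ge 4$. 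This low-order factor in $L^\infty_x$ is exactly what produces the gain $\sqrt{E(t)}$.

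For the momentum derivatives in $\partial^\alpha_\beta$, I would observe that all the $\Gamma_i$ depend on $q$ only through explicit polynomials in $q^\mu$ and through $\sqrt{J^0}$; applying $\partial_\beta$ therefore produces at most polynomial factors in $q$ multiplied by $\sqrt{J^0}$, which stays integrable. Concretely, I would bound
$$
|\partial^\alpha_\beta \Gamma_i(f)| \le C\,\mathbb{P}(q^0)\sqrt{J^0(q)}\sqrt{E(t)}\sum_{|k|\le|\alpha|}\|\partial^k f\|_{L^2_q},
$$
for some polynomial $\mathbb{P}$ depending on $i,\alpha,\beta$. The factor $U_\mu q^\mu/q^0$ is controlled using $1\le U_\mu q^\mu\le 2\sqrt{1+|U|^2}q^0$ from \eqref{<uq<}, and its derivatives by Lemma~\ref{U<}. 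After Cauchy--Schwarz in $q$, the polynomial $\mathbb{P}(q^0)\sqrt{J^0}$ is absorbed into a finite $L^2_q$ constant, leaving $\|g\|_{L^2_q}$ and the required $f$-factors.

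For the two terms involving $L(f) = P(f)-f$, I would use the fact that $\Phi$ is of order $\|f\|_{L^2_q}$ by Lemma~\ref{lem22}, and $\mathbf{q}_\mu q^\mu/(nh)$ is of order $\|f\|_{L^2_q}$ by the estimates $|\mathbf{q}|\lesssim\sum\|\partial^k f\|_{L^2_q}$ in \eqref{q}, $|\mathbf{q}^0|\lesssim\sqrt{E(t)}\|f\|_{L^2_q}$ in \eqref{q0}, and $nh\ge \widetilde{h}(\beta_0)/2$ by \eqref{nh1}. Again pushing the lowest-order derivative to $L^\infty_x$ yields the $\sqrt{E(t)}$ prefactor, while the remaining $L(f)$ satisfies $\|\partial^\alpha L(f)\|_{L^2_q}\le C\sum_{|k|\le|\alpha|}\|\partial^k f\|_{L^2_q}$ since $P$ is a bounded projection onto the finite-dimensional space spanned by $\{e_i\}$.

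The main obstacle will be $\Gamma_5(f)$, which is given as a $\theta$-integral of the Hessian $D^2J(\theta)$ contracted with the vector $(n-1,U^0-1,U,e-e_0)$. Here I would invoke Lemma~\ref{Q} to write $D^2J(\theta) = \mathcal{Q}\,J(\theta)$ with $\mathcal{Q}$ polynomial in $q^\mu$ and smooth in $\theta$, then differentiate under the integral sign in $\theta$. Derivatives of the transitional quantities $n_\theta, U^0_\theta, U_\theta, e_\theta$ with respect to $x,t$ are bounded by the corresponding quantities for $F$ via the same Sobolev--embedding argument, and $J(\theta)$ retains exponential decay in $q^0$ uniformly in $\theta$ since $\beta_\theta$ stays close to $\beta_0$ when $E(t)$ is small (Proposition~\ref{beta decreasing2}). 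Since $\Gamma_5$ is quadratic in $(n-1,U^0-1,U,e-e_0)$, applying Leibniz and then Lemma~\ref{lem2}, Lemma~\ref{U<} together with the $L^\infty_x$ placement of one low-order derivative produces the factor $\sqrt{E(t)}\sum_{|k|\le|\alpha|}\|\partial^k f\|_{L^2_q}$ before Cauchy--Schwarz against $g$.
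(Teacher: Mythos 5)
Your proposal follows essentially the same route as the paper: Leibniz on $\partial^\alpha_\beta$, the observation that each term in $\Gamma$ is at least quadratic in the perturbed quantities so Sobolev embedding of a low-order factor yields the $\sqrt{E(t)}$ gain, polynomial-times-Gaussian control of $q$-derivatives, Cauchy--Schwarz against $g$, and Lemma~\ref{Q} plus uniform-in-$\theta$ exponential decay of $J(\theta)$ for $\Gamma_5$. The one place where the paper is more careful than your sketch is the division by $\sqrt{J^0}$ in $\Gamma_5$: it is not enough to say ``$J(\theta)$ retains exponential decay''; one must show the decay rate $C'$ can be taken \emph{strictly larger than} $\beta_0/2$ (the paper arranges $C'>\tfrac{3}{4}\beta_0$ via \eqref{beta0}) so that $e^{-C'\sqrt{1+|q|^2}}/\sqrt{J^0}$ is still exponentially decaying, and your invocation of ``$\beta_\theta$ stays close to $\beta_0$'' implicitly supplies this but should be made explicit.
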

\begin{proof}
We recall from Proposition \ref{lin3} that
		$$
		\Gamma (f)=\frac{U_\mu q^\mu}{q^0} \sum_{i=1}^5\Gamma_i (f)+\frac{1}{q^0}\left(\Phi+ \frac{\mathbf{q}_\mu q^\mu}{nh} \right) L(f).
	$$
For brevity, and to avoid unnecessary repetition, we only deal with
	$$
	I_1:=\frac{U_\mu q^\mu}{q^0} \Gamma_{3} (f), \qquad	I_2:=\frac{U_\mu q^\mu}{q^0} \Gamma_{5} (f).
	$$
Throughout the proof, we use the following elementary estimates without explicitly mentioning them:
$$
|\partial_{\beta}\hat{q}|+\left|\partial_{\beta}\left\{\frac{1}{q^{0}}\right\}\right|\le C,\quad |\partial_{\beta}\sqrt{J^0}|+\big|\partial_{\beta}\big(q^{\mu}\sqrt{J^0}\big)\big|\le C\sqrt{J^0},\quad \left|\partial_{\beta}\left\{\frac{1}{\sqrt{J^0}}\right\}\right|\le C\frac{1}{\sqrt{J^0}}.
$$
\newline
$\bullet$ Estimate of $I_1$ :
It follows from Lemma \ref{U<} that
\begin{align}\label{I1}
	\begin{split}
\left|\partial^\alpha_\beta\left\{I_1\right\}\right|&=\left|\partial^\alpha_\beta\left\{\frac{U_\mu q^\mu}{q^0} \Gamma_{3} (f)\right\}\right|\cr
&\le \sum_{\substack{|k|\le |\alpha|\cr |l|\le|\beta|}}\left|\biggl(\partial^{\alpha-k} \{U^0\}-\partial^{\alpha-k}\{U\}\cdot \partial_{\beta-l}\{\hat{q}\}\biggl) \partial^k_l\Gamma_{3} (f)\right| \cr
&\le  C\sum_{\substack{|k|\le |\alpha|\cr |l|\le|\beta|}}\biggl(1+\sum_{|m|\le|\alpha-k|}\|\partial^{m} f\|_{L^2_q}\biggl)\left|\partial^k_l\Gamma_{3} (f)\right|.
\end{split}\end{align}
Here $\Gamma_3(f)$ is given in Lemma \ref{lin2} that
\begin{align*}
	\Gamma_{3}(f)&=-\beta_0\left\{\frac{\Psi}{2}+\frac{\Psi^3-3\Psi^2}{2(2+\Psi-\Psi^2+2\sqrt{1+\Psi})}\right\}\int_{\mathbb{R}^3} q f\sqrt{J^0}\frac{dq}{q^0}\cdot q\sqrt{J^0}\cr
	&-\beta_0\left\{\frac{4\int_{\mathbb{R}^3} \left(2q^0\Phi+\Phi^2\right)J^0\frac{dq}{q^0}+4\int_{\mathbb{R}^3} (u_\mu q^\mu)^2 f\sqrt{J^0}\frac{dq}{q^0}-\int_{\mathbb{R}^3} f\sqrt{J^0}\frac{dq}{q^0}}{3\widetilde{h}(\beta_0)(n h ) } \right\}\cr
	&\quad\times \left(\int_{\mathbb{R}^3} (u_\mu q^\mu)q F\frac{dq}{q^0}-u \int_{\mathbb{R}^3} (u_\mu q^\mu)^2 F\frac{dq}{q^0}\right)\cdot q\sqrt{J^0}+\frac{\beta_0}{\widetilde{h}(\beta_0)}\Gamma_{3}^*(f)\cdot q\sqrt{J^0}
\end{align*}
where $\Gamma_{3}^*(f)$ denotes
\begin{align*}
	\Gamma_{3}^*(f)	&=-\sum_{i=1}^3\int_{\mathbb{R}^3} q^if\sqrt{J^0}\frac{dq}{q^0}\int_{\mathbb{R}^3} q^i qf\sqrt{J^0}\frac{dq}{q^0}+\int_{\mathbb{R}^3} \Phi_1 qF\frac{dq}{q^0}\cr
	&  +e_0\int_{\mathbb{R}^3} q f\sqrt{J^0}\frac{dq}{q^0}\left\{\frac{\Psi}{2}+\frac{\Psi^3-3\Psi^2}{2(2+\Psi-\Psi^2+2\sqrt{1+\Psi})}\right\}\cr
	&  -\frac{1}{\sqrt{1+\Psi}}\left(\int_{\mathbb{R}^3} q^0 f\sqrt{J^0} dq+\int_{\mathbb{R}^3}\left(2q^0\Phi+\Phi^2 \right)F\frac{dq}{q^0} \right)\int_{\mathbb{R}^3} q f\sqrt{J^0}\frac{dq}{q^0}.
\end{align*}
Using Lemma \ref{lem22}, \eqref{1/1+pi} and \eqref{nh1}, one can derive
\begin{align}\label{gamma*}
	\left|\partial^k\Gamma_{3}(f)\right|&\le C\sqrt{E(t)}\sum_{|l|\le |k|}\|\partial^l f\|_{L^2_q} \sqrt{J^0}.
\end{align}
We go back to \eqref{I1} with \eqref{gamma*} to have
 \begin{align*}
 	\left|\int_{\mathbb{R}^3} \partial^{\alpha}_{\beta}\{I_1\}g\, dq\right|&\le C\sum_{\substack{|k|\le |\alpha|\cr |l|\le|\beta|}}\biggl(1+\sum_{|m|\le|\alpha-k|}\|\partial^{m} f\|_{L^2_q}\biggl)\int_{\mathbb{R}^3} \left|\partial^k_l\left\{\Gamma_{3} (f)\right\} g \right|dq \cr
 	&\le  C\sqrt{E(t)}\sum_{|k|\le |\alpha|}\biggl(1+\sum_{|m|\le|\alpha-k|}\|\partial^{m} f\|_{L^2_q}\biggl)\sum_{|l|\le |k|}\|\partial^l f\|_{L^2_q}\int_{\mathbb{R}^3}   |g|\sqrt{J^0}  dq \cr
&\le C\sqrt{E(t)}\sum_{|k|\le|\alpha|}\|\partial^{k}f\|_{L^2_q}\|g\|_{L^2_q}.
 \end{align*}
In the last line, we applied $H^2(\mathbb{T}^{3})\subseteq L^{\infty}(\mathbb{T}^{3})$ to lower order terms.
\noindent \newline
$\bullet$ Estimate of $I_2$ :  Let $(n-1,U^0-1,U,e-e_{0})=(y_{1},y_{2},y_{3},y_{4},y_{5},y_6)$ and recall from Lemma \ref{Q} that
$$
D_{n_{\theta},U_\theta^0,U_{\theta},\alpha_{\theta}}^{2}J(\theta)=\mathcal{Q}J(\theta).
$$
We then have
$$
(y_{1},\cdots,y_{6})D_{n_{\theta},U_\theta^0,U_{\theta},e_{\theta}}^{2}J(\theta)(y_{1},\cdots,y_{6})^{T}=\sum_{i,j=1}^{6}y_{i}y_{j}\mathcal{Q}_{ij}J(\theta),
$$
which gives
\begin{align}\label{long} \partial^{\alpha}_{\beta}\Gamma_{5}(f)=\sum_{i,j=1}^6\sum_{\substack{|\alpha_{1}|+|\alpha_{2}|+|\alpha_{3}|\cr+|\alpha_{4}|=|\alpha|}}\!\!\!\!\partial^{\alpha_{1}}y_{i}\partial^{\alpha_{2}}y_{j}
\bigg\{\sum_{\substack{|\beta_{1}|+|\beta_{2}|+|\beta_{3}|\cr=|\beta|}}
	\int^1_0\partial^{\alpha_{3}}_{\beta_{1}}\mathcal{Q}_{ij}\partial^{\alpha_{4}}_{\beta_{2}}J(\theta)d\theta \partial_{\beta_{3}}\bigg(\frac{1}{\sqrt{J^0}}\bigg)\bigg\}.
\end{align}
\noindent  {\bf Claim:} Assume $E(f)(t)$ is sufficiently small. Then, there exist   positive constants $C$, $C^{\prime}>0$, independent of $\theta$, such that
$$J(\theta)\le Ce^{-C^{\prime}\sqrt{1+|q|^{2}}}$$
and
\begin{align*}
	|\partial^{\alpha}_{\beta}\mathcal{Q}_{ij}|&\le C\sum_{|k|\le|\alpha|}\|\partial^{k}f\|_{L^2_q},\cr
	|\partial^{\alpha}_{\beta}J(\theta)|&\le C\sum_{|k|\le|\alpha|}\|\partial^{k}f\|_{L^2_q}e^{-C^{\prime}\sqrt{1+|q|^{2}}}.
\end{align*}
\noindent {\bf Proof of the claim:}
We observe from Lemma \ref{beta decreasing} that since $\widetilde{e}(\beta)$ is a decreasing function, its inverse $\beta_{\theta}=(\widetilde{e})^{-1}(e_\theta)$ is also decreasing. So it follows from Lemma \ref{lem2} that $\beta_{\theta}$ is bounded by
$$
(\widetilde{e})^{-1}\left( e_0+\sqrt{E(t)}\right)\le \beta_\theta \le (\widetilde{e})^{-1}\left( e_0-\sqrt{E(t)}\right).
$$
From this observation with Lemma \ref{U<}, we have
\begin{align*}
\beta_{\theta}U_{\theta}^{\mu}q_{\mu}&\ge (\widetilde{e})^{-1}\left( e_0+\sqrt{E(t)}\right)\left( \min\{1,U^0\} \sqrt{1+|q|^{2}}-\max_{U}\{|U||q|\} \right)\cr
	&= (\widetilde{e})^{-1}\left( e_0+\sqrt{E(t)}\right)\left\{\big(1-\sqrt{E(t)}\big)\sqrt{1+|q|^2}-\big(\sqrt{E(t)}\big)|q|\right\}\cr
	&\ge (\widetilde{e})^{-1}\left( e_0+\sqrt{E(t)}\right)\big(1-2\sqrt{E(t)}\big)\sqrt{1+|q|^2}.
\end{align*}
When $E(t)$ is sufficiently small, we can take $C^{\prime}$ such that
\begin{align}\label{beta0}
 (\widetilde{e})^{-1}\left( e_0+\sqrt{E(t)}\right)\big(1-2\sqrt{E(t)}\big)>C^{\prime}>\frac{3}{4}\beta_0.
\end{align}
 We thus have from Lemma \ref{lem2} that
\begin{align}\label{JJ}
J(\theta)=\frac{n_{\theta}}{M(\beta_{\theta})}e^{-\beta_{\theta}U_{\theta}^{\mu}q_{\mu}}\le Ce^{-C^{\prime}\sqrt{1+|q|^{2}}}.
\end{align}
This gives the first estimate.\newline
For the second estimate, we observe by an explicit, tedious computation that the derivatives of $J(\theta)$ and $\mathcal{Q}$ are expressed as
\begin{align*}
	\partial^{\alpha}_{\beta}J(\theta) &=\sum_{\substack{|\alpha_1|+\cdots+|\alpha_4|\cr =|\alpha|}}\sum_{l} \mathbb{P}_J( n_{\theta},\cdots,\partial^{\alpha_{1}} n_{\theta},U^\mu_{\theta},\cdots,\partial^{\alpha_{2}} U_{\theta},e_{\theta},\cdots,\partial^{\alpha_{3}} e_{\theta})_l\cr
	&\quad\times\biggl(\frac{\mathbb{Q}_J( q^{\mu},M(\beta_{\theta}),\cdots,\partial_{e_\theta}^{\alpha_4}M(\beta_{\theta}),\beta_{\theta},\cdots,\partial_{e_\theta}^{\alpha_4}\beta_{\theta}}{\mathbb{M}_J(n_{\theta},M(\beta_{\theta}),q^{0})}\biggl)_lJ(\theta),\cr
	\partial^{\alpha}_{\beta} \mathcal{Q}_{ij}&=\partial^{\alpha}_{\beta} \mathcal{Q}_{ij}(n_{\theta},U^\mu_{\theta},e_{\theta},q^\mu)\cr
	&=\biggl\{\sum_{\substack{|\alpha_1|+\cdots+|\alpha_4|\cr =|\alpha|}}\sum_{k}\mathbb{P}_{\mathcal{Q}}(n_{\theta},\cdots,\partial^{\alpha_{1}} n_{\theta},U^\mu_{\theta},\cdots,\partial^{\alpha_{2}} U_{\theta}^\mu,e_{\theta},\cdots,\partial^{\alpha_{3}} e_{\theta})_{k}\cr &\quad\times\biggl(\frac{\mathbb{Q}_{\mathcal{Q}}(q^{\mu},M(\beta_{\theta}),\cdots,\partial_{e_\theta}^{\alpha_4}M(\beta_{\theta}),\beta_{\theta},\cdots,\partial_{e_\theta}^{\alpha_4}\beta_{\theta}}{\mathbb{M}_{\mathcal{Q}}(n_{\theta},M(\beta_{\theta}), q^{0})}\biggl)_{k}\biggl\}_{ij},
\end{align*}
for some generically defined polynomials $\mathbb{P}$, $\mathbb{Q}$ and $\mathbb{M}$. Then, the desired result follows from Lemma \ref{lem2}, Lemma \ref{U<} and the estimate (\ref{JJ}).
This ends the proof of the claim.\newline

Now, substituting the estimates in the claim into (\ref{long}), we obtain
\begin{align*}
&\left|\partial^{\alpha}_{\beta}\Gamma_{5}(f)\right|\cr	
&\le \sum_{\substack{|\alpha_{1}|+|\alpha_{2}|+|\alpha_{3}|\cr+|\alpha_{4}|=|\alpha|}}
|\partial^{\alpha_{1}}y_{i}||\partial^{\alpha_{2}}y_{j}|\sum_{\substack{|\beta_{1}|+|\beta_{2}|+|\beta_{3}|\cr=|\beta|}}
	\int^1_0|\partial^{\alpha_{3}}_{\beta_{1}}\mathcal{Q}_{ij}||\partial^{\alpha_{4}}_{\beta_{2}}J(\theta)|d\theta  \bigg|\partial_{\beta_{3}}\left\{\frac{1}{\sqrt{J^0}}\right\}\!\bigg|\cr
	&\le C\!\!\!\sum_{\substack{|\alpha_{1}|+|\alpha_{2}|+|\alpha_{3}|\cr+|\alpha_{4}|=|\alpha|}}\sum_{|k|\le|\alpha_{1}|}\|\partial^{k}f\|_{L^2_q}\!\!\sum_{|k|\le|\alpha_{2}|}\|\partial^{k}f\|_{L^2_q}\!\!\sum_{|k|\le|\alpha_{3}|}\|\partial^{k}f\|_{L^2_q}\!\!\sum_{|k|\le|\alpha_{4}|}\|\partial^{k}f\|_{L^2_q}e^{-C^{\prime}\sqrt{1+|q|^2}}\frac{1}{\sqrt{J^0}}\cr
	&\equiv B(f)\frac{e^{-C^{\prime}\sqrt{1+|q|^2}}}{\sqrt{J^0}}.
\end{align*}
Then, we apply the Sobolev embedding $H^2(\mathbb{T}^3)\subseteq L^{\infty}(\mathbb{T}^3)$ to the lower order terms  to get
\begin{align*}
	B(f)\le C\sqrt{E(t)}\sum_{|k|\le|\alpha|}\|\partial^{k}f\|_{L^2_q},
\end{align*}
and observe from (\ref{beta0}) that $C^\prime-\frac{\beta_0}{2}>\frac{\beta_0}{4}$, that gives
\[
\frac{e^{-C^{\prime}\sqrt{1+|q|^2}}}{\sqrt{J_0}}\leq e^{-\frac{1}{4}\beta_0\sqrt{1+|q|^2}}.
\]
Therefore,
\begin{align*}
&\left|\partial^{\alpha}_{\beta}\Gamma_{5}(f)\right|\leq C\sqrt{E(t)}\sum_{|k|\le|\alpha|}\|\partial^{k}f\|_{L^2_q}e^{-\frac{1}{4}\beta_0\sqrt{1+|q|^2}}.
\end{align*}
The desired estimate follows directly from this:
\begin{align*}
	\left|\int_{\mathbb{R}^3} \partial^{\alpha}_{\beta}\{I_2\}g\, dq\right|&\le   \sum_{\substack{|k|\le |\alpha|\cr |l|\le|\beta|}}\int_{\mathbb{R}^3} \left|\biggl(\partial^{\alpha-k} \{U^0\}-\partial^{\alpha-k}\{U\}\cdot \partial_{\beta-l}\{\hat{q}\}\biggl) \partial^k_l\Gamma_{5} (f)g\right|\, dq\cr
	&\le C\sqrt{E(t)}\sum_{|k|\le|\alpha|}\|\partial^{k}f\|_{L^2_q}\biggl(1+\sum_{|m|\le|\alpha-k|}\|\partial^{m} f\|_{L^2_q}\biggl)\int_{\mathbb{R}^3} e^{-\frac{1}{4}\beta_0\sqrt{1+|q|^{2}}}|g|dq\cr
	&\le C\sqrt{E(t)}\sum_{|k|\le|\alpha|}\|\partial^{k}f\|_{L^2_q}\|g\|_{L^2_q}.
\end{align*}
\end{proof}
The following lemma on the difference of distribution functions is also needed for the local existence and uniqueness. We omit the proof since it can be treated similarly.
\begin{lemma}\label{nonlinear2}
	Assume $\bar{F}:=J^0+\bar{f}\sqrt{J^0}$ is another solution of \eqref{AWRBGK1}. For sufficiently small $E(f)(t)$ and $E(\bar{f})(t)$, we then have
	$$
	\left|\int_{\mathbb{T}^3}\int_{\mathbb{R}^3} \left\{\Gamma(f)-\Gamma(\bar{f})\right\}(f-\bar{f}) \,dqdx\right|\le
	C \|f-\bar{f}\|^{2}_{L^2_{x,q}}.$$
\end{lemma}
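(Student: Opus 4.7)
The plan is to mirror the strategy of Lemma \ref{nonlinear1}, viewing the difference $\Gamma(f) - \Gamma(\bar{f})$ as a bilinear form in which at least one factor is controlled by a small norm of the solutions and the other factor is a linear functional of $f - \bar{f}$. Since no spatial or momentum derivatives are required, the argument reduces to pointwise-in-$(x,q)$ estimates combined with a single Cauchy-Schwarz.

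First, I would establish the Lipschitz estimates on the macroscopic fields: under the smallness of $E(f)(t)$ and $E(\bar{f})(t)$, the explicit integral representations of $n, u, e, \mathbf{q}, U, nh$ in terms of $F$ (as in \eqref{LLD1} and the computations of Section 3), together with the uniform bounds from Lemma \ref{lem2} and Lemma \ref{U<}, yield
$$|n - \bar{n}| + |U^\mu - \bar{U}^\mu| + |e - \bar{e}| + |nh - \bar{n}\bar{h}| \le C\|f - \bar{f}\|_{L^2_q}.$$
In the same spirit, the auxiliary quantities of \eqref{notation2} satisfy the improved bounds $|\Psi - \bar\Psi| + |\Phi - \bar\Phi| \le C(1+q^0)\|f-\bar f\|_{L^2_q}$ and $|\Psi_1 - \bar\Psi_1| + |\Phi_1 - \bar\Phi_1| \le C\sqrt{E(f)+E(\bar f)}\,(1+q^0)\|f-\bar f\|_{L^2_q}$, obtained by expanding the squares and applying $H^2(\mathbb{T}^3)\hookrightarrow L^\infty(\mathbb{T}^3)$ to the lower-order factors.

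Next, I would decompose
$$\Gamma(f) - \Gamma(\bar{f}) = \frac{U_\mu q^\mu}{q^0}\sum_{i=1}^{5}\bigl(\Gamma_i(f) - \Gamma_i(\bar{f})\bigr) + \Bigl(\frac{U_\mu q^\mu - \bar U_\mu q^\mu}{q^0}\Bigr)\sum_{i=1}^{5}\Gamma_i(\bar{f}) + \frac{1}{q^0}\Bigl\{\bigl(\Phi + \tfrac{\mathbf{q}_\mu q^\mu}{nh}\bigr)L(f) - \bigl(\bar\Phi + \tfrac{\bar{\mathbf{q}}_\mu q^\mu}{\bar n\bar h}\bigr)L(\bar f)\Bigr\},$$
and apply the telescoping identity $AB - \bar{A}\bar{B} = (A-\bar{A})B + \bar{A}(B-\bar{B})$ throughout each $\Gamma_i(f) - \Gamma_i(\bar{f})$. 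Since each $\Gamma_i$ ($i=1,\ldots,4$) is at least quadratic in $(\Psi,\Psi_1,\Phi,\Phi_1,\mathbf{q},\ldots)$, the Lipschitz estimates above show that every resulting term has the form $C\sqrt{E(f)+E(\bar f)}\,\|f-\bar f\|_{L^2_q}\sqrt{J^0}$ pointwise in $q$, with at worst polynomial growth in $q^0$ absorbed by $\sqrt{J^0}$.

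For the remainder $\Gamma_5(f) - \Gamma_5(\bar f)$, I would further split the quadratic form by
$$y^{T}D^2 J(\theta)y - \bar y^{T}D^2 J(\bar\theta)\bar y = (y-\bar y)^T D^2J(\theta)y + \bar y^T D^2J(\theta)(y-\bar y) + \bar y^T\bigl(D^2J(\theta)-D^2J(\bar\theta)\bigr)\bar y,$$
where $y=(n-1,U^0-1,U,e-e_0)$ and $\bar y$ is the analogue for $\bar f$. The first two terms are handled by the Lipschitz estimates on the fields combined with the claim in the proof of Lemma \ref{nonlinear1}, which gives a uniform bound $|D^2J(\theta)| \le Ce^{-C'\sqrt{1+|q|^2}}$ with $C' > \tfrac{3}{4}\beta_0$. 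For the last term, a further application of the mean value theorem along the convex combination of the two transitional states, together with the same exponential bounds on $\partial_{(n_\theta,U_\theta^\mu,e_\theta)}\mathcal{Q}$ and on $J(\theta)$, yields $|D^2J(\theta) - D^2J(\bar\theta)| \le C\|f-\bar f\|_{L^2_q}e^{-C'\sqrt{1+|q|^2}}$. Dividing by $\sqrt{J^0}$ is still safe because $C' - \tfrac{\beta_0}{2} > \tfrac{\beta_0}{4}$, so that $e^{-C'\sqrt{1+|q|^2}}/\sqrt{J^0} \le e^{-\beta_0\sqrt{1+|q|^2}/4}$ is integrable.

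Finally, multiplying the pointwise estimate $|\Gamma(f) - \Gamma(\bar f)| \le C\sqrt{E(f)+E(\bar f)}\,\|f-\bar f\|_{L^2_q}\,e^{-\beta_0\sqrt{1+|q|^2}/4}$ by $|f-\bar f|$, integrating in $q$ via Cauchy-Schwarz, and then integrating in $x$ yields
$$\left|\int_{\mathbb{T}^3}\int_{\mathbb{R}^3}\{\Gamma(f) - \Gamma(\bar f)\}(f-\bar f)\,dqdx\right| \le C\sqrt{E(f)+E(\bar f)}\,\|f-\bar f\|_{L^2_{x,q}}^2 \le C\|f-\bar f\|_{L^2_{x,q}}^2,$$
as desired. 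The main obstacle is the $\Gamma_5$ difference: one must verify that the Maxwellian-like exponential decay of $D^2J(\theta)$ is preserved uniformly along the segment connecting the two transitional macroscopic states, which is exactly what the uniform lower bound \eqref{beta0} on $\beta_\theta$ (now applied to both $f$ and $\bar f$) provides.
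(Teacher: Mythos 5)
The paper does not actually give a proof of Lemma~\ref{nonlinear2} --- it is declared to ``follow similarly'' to Lemma~\ref{nonlinear1} --- so your task was essentially to fill in the omitted argument, and your plan (Lipschitz estimates on the macroscopic fields, telescoping the differences of the $\Gamma_i$, the mean-value expansion of $D^2J(\theta)-D^2J(\bar\theta)$ via the uniform lower bound on $\beta_\theta$, and finally Cauchy--Schwarz) is the right one and matches the spirit of the paper's treatment of $\Gamma$. Most of the details check out: each $\Gamma_i$ ($i\le 4$) is indeed genuinely quadratic in the perturbation data (for instance the linear part of $2q^0\Phi$ integrates to zero against $J^0$ by odd symmetry, so $\int(2q^0\Phi+\Phi^2)F\,dq/q^0$ is $O(\|f\|^2)$), and the uniform bound $\beta_\theta>C'>\tfrac34\beta_0$ from \eqref{beta0} does let you control $D^2J(\theta)/\sqrt{J^0}$ along the full segment between the two transitional states.

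There is, however, one inaccuracy you should fix: the final claimed \emph{pointwise} bound
$|\Gamma(f)-\Gamma(\bar f)|\le C\sqrt{E(f)+E(\bar f)}\,\|f-\bar f\|_{L^2_q}e^{-\beta_0\sqrt{1+|q|^2}/4}$
cannot hold literally, because $\Gamma$ contains the term $\tfrac1{q^0}\bigl(\Phi+\tfrac{\mathbf{q}_\mu q^\mu}{nh}\bigr)L(f)$ and $L(f)=P(f)-f$ includes the bare $-f$, which has no exponential decay in $q$. After telescoping, this piece produces contributions of the schematic form $\tfrac{\Phi-\bar\Phi}{q^0}\,f$ and $\tfrac{\bar\Phi}{q^0}(f-\bar f)$, which are \emph{not} dominated pointwise by a Maxwellian factor. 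The correct formulation is the one used in Lemma~\ref{nonlinear1}: estimate the integral against the test function directly, i.e.\ bound
$\bigl|\int_{\mathbb{R}^3}\tfrac{\Phi-\bar\Phi}{q^0}\,f\,(f-\bar f)\,dq\bigr|
\le \bigl\|\tfrac{\Phi-\bar\Phi}{q^0}\bigr\|_{L^\infty_q}\|f\|_{L^2_q}\|f-\bar f\|_{L^2_q}
\le C\|f-\bar f\|_{L^2_q}\|f\|_{L^2_q}\|f-\bar f\|_{L^2_q}$
and similarly for the $\bar\Phi(f-\bar f)$ piece, then use $\sup_x\|f(x,\cdot)\|_{L^2_q}\le C\sqrt{E(f)}$ via the Sobolev embedding before integrating in $x$. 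With that correction --- keeping the $f$ and $f-\bar f$ factors inside the $q$-integral rather than pretending to extract a decaying envelope --- the estimate closes exactly as you intend, and the remainder of your argument is sound.
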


%
%
%
%
%
%
%
\section{Proof of the main result}
Now, we are ready to prove the main result. Since it is rather standard, we only sketch the proof.
\subsection{Local existence}
With the estimates on the nonlinear parts established in the previous section, the local in time existence then follows by standard argument \cite{Guo whole,Guo VMB}:
\begin{proposition}\label{thm8}
	Let $N\geq4$ and  $F_{0}=J^0+\sqrt{J^0}f_{0}\ge 0.$ 
	Then there exist $M_{0}>0, T_{*}>0$, such that if $T_{*}\le\frac{M_{0}}{2}$ and $E(f_{0})\le \frac{M_{0}}{2}$, there is a unique global solution $F(x,q,t)$ to the Anderson-Witting model \eqref{AWRBGK1} such that
	\begin{enumerate}
		\item[(1)] The energy functional is continuous in $[0,T_{*})$ and uniformly bounded:
		$$
		\sup_{0\le t\le T_{*}}E(f)(t)\le M_{0}.
		$$
		\item[(2)] The distribution function remains positive in $[0,T_{*})$:
		$$
		F(x,q,t)=J^0+\sqrt{J^0}f(x,q,t)\ge 0.
		$$
	\end{enumerate}
\end{proposition}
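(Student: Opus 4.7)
The plan is to construct the solution via a standard linear iteration. Set $f^0 \equiv 0$ and, given $f^n$, define $f^{n+1}$ as the unique solution of the linear transport equation
\begin{align*}
\partial_t f^{n+1} + \hat{q}\cdot \nabla_x f^{n+1} + f^{n+1} &= P(f^n) + \Gamma(f^n), \cr
f^{n+1}(0,x,q) &= f_0(x,q),
\end{align*}
where the $+f^{n+1}$ term on the left comes from writing the linearized operator as $L = P - I$. This linear problem is solved explicitly by Duhamel's formula along characteristics $s \mapsto x-\hat{q}(t-s)$, which also gives a well-defined classical solution.

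The first main step is to show the uniform bound $\sup_{0\le t\le T_*} E(f^n)(t) \le M_0$ for all $n$. For each multi-index with $|\alpha|+|\beta|\le N$, apply $\partial^\alpha_\beta$ to the iteration equation and take the $L^2_{x,q}$ inner product with $\partial^\alpha_\beta f^{n+1}$. The transport term vanishes upon integration by parts over $\mathbb{T}^3$ (up to commutator terms arising when $\partial_\beta$ hits $\hat{q}$, which are lower order in $q$-weight and absorbable). The absorption term $+f^{n+1}$ gives the coercive contribution $\|\partial^\alpha_\beta f^{n+1}\|_{L^2_{x,q}}^2$. On the right, $P(f^n)$ is bounded in every Sobolev norm by $C\sqrt{E(f^n)}$ since $P$ is the finite-rank projection onto $\mathrm{span}\{e_i\}$ and each $e_i$ is rapidly decaying in $q$; the nonlinear contribution is controlled by Lemma \ref{nonlinear1}, giving a bound of order $\sqrt{E(f^n)}\,E(f^n)$. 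Young's inequality followed by Gr\"onwall on $[0,T_*]$ then yields
\[
E(f^{n+1})(t)\le CE(f_0) + C\bigl(1+\sqrt{M_0}\bigr)M_0\,T_*,
\]
so choosing $M_0$, $T_*$, and $E(f_0)$ small enough closes the induction.

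The second step is to show $\{f^n\}$ is Cauchy in $L^\infty_t L^2_{x,q}$. The difference $g^n := f^{n+1}-f^n$ satisfies $(\partial_t+\hat{q}\cdot\nabla_x+1)g^n = P(f^n-f^{n-1}) + \Gamma(f^n)-\Gamma(f^{n-1})$. Testing against $g^n$ in $L^2_{x,q}$, the projection term is bounded by $\|g^{n-1}\|_{L^2_{x,q}}$ and the nonlinear difference by Lemma \ref{nonlinear2}, so on a sufficiently short interval we obtain a contraction $\sup_{[0,T_*]}\|g^n\|_{L^2_{x,q}} \le \tfrac12 \sup_{[0,T_*]}\|g^{n-1}\|_{L^2_{x,q}}$. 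Hence $f^n \to f$ strongly in $L^\infty_t L^2_{x,q}$, and by weak-$*$ compactness and lower semicontinuity the limit retains the uniform energy bound $E(f)(t)\le M_0$. Passing to the limit in the weak formulation recovers \eqref{LAW}, and continuity of $E(f)(t)$ follows from the energy identity once $f$ is a classical solution.

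For positivity, it is cleanest to re-run the iteration at the level of $F^n = J^0 + \sqrt{J^0} f^n$:
\begin{align*}
\partial_t F^{n+1} + \hat{q}\cdot\nabla_x F^{n+1} + \frac{U_\mu^n q^\mu}{q^0}F^{n+1} = \frac{U_\mu^n q^\mu}{q^0}J(F^n),
\end{align*}
solved by Duhamel along characteristics. The Cauchy--Schwarz bound $U_\mu^n q^\mu \ge 1$ used in the proof of Lemma \ref{U<} ensures the damping coefficient $U_\mu^n q^\mu/q^0$ is positive, so the exponential integrating factors are well-defined and positive. Together with $F_0\ge 0$ and the inductive hypothesis $F^n\ge 0$ (which forces $J(F^n)\ge 0$ since $n^n, \beta^n > 0$), the Duhamel representation yields $F^{n+1}\ge 0$, and positivity is preserved in the limit. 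The main obstacle is the bookkeeping in the energy step: one must verify that each $f^n$ enters the regime where Lemmas \ref{lem22}--\ref{U<} and Proposition \ref{beta decreasing2} apply uniformly in $n$, so that the macroscopic fields $\beta^n, U^n, e^n$ and the structure of $\Gamma(f^n)$ remain well-controlled throughout the iteration. This reduces to the uniform smallness of $E(f^n)$, which is precisely what the inductive bound delivers.
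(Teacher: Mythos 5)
The paper gives essentially no details for this proposition (it only cites \cite{Guo whole,Guo VMB} and calls the argument standard), so your task is to fill in the gap the paper leaves, and your overall strategy --- a linear iteration with Duhamel, uniform energy bounds, a contraction step, and a positivity argument --- is exactly the right one.

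There is, however, a genuine gap in the way positivity is handled. You set up two \emph{different} iteration schemes: the first, at the $f$-level,
\[
\partial_t f^{n+1}+\hat q\cdot\nabla_x f^{n+1}+f^{n+1}=P(f^n)+\Gamma(f^n),
\]
which you use for the energy bound and the Cauchy/contraction step; and the second, at the $F$-level,
\[
\partial_t F^{n+1}+\hat q\cdot\nabla_x F^{n+1}+\tfrac{U^n_\mu q^\mu}{q^0}F^{n+1}=\tfrac{U^n_\mu q^\mu}{q^0}J(F^n),
\]
which you use for positivity. Substituting $F^{n+1}=J^0+\sqrt{J^0}f^{n+1}$ into the second scheme and using $L(f)+\Gamma(f)=\tfrac{U_\mu q^\mu}{q^0}\bigl(P(f)-f+\sum_i\Gamma_i(f)\bigr)$ (which follows from Proposition \ref{lin3}) shows it is \emph{not} the $f$-level scheme you ran the energy estimate on: the damping coefficient is $U^n_\mu q^\mu/q^0$ rather than $1$, and the right-hand side is $\tfrac{U^n_\mu q^\mu}{q^0}\bigl(P(f^n)+\sum_i\Gamma_i(f^n)\bigr)$ rather than $P(f^n)+\Gamma(f^n)$. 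The two sequences $\{f^n\}$ are therefore different; you have proved an energy bound for one and positivity for the other, and you cannot simply attach both properties to ``the limit'' without reconciling them. You would either need to run the energy estimate (and the contraction) on the $F$-level scheme itself --- which is what the cited Guo papers do, and what makes the condition $U^n_\mu q^\mu/q^0\geq 1-C\sqrt{M_0}>0$ from Lemma \ref{U<} precisely the needed coercivity --- or invoke uniqueness of the limiting PDE to identify the two limits, which again requires first establishing that the second scheme actually converges.

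The cleanest repair, and the one most consonant with the paper's intent, is to use a single iteration at the level of $F$ throughout: the damping $U^n_\mu q^\mu/q^0$ is bounded below by a positive constant once $E(f^n)$ is small (so Duhamel's kernel is positive and $F^{n+1}\ge0$), and the corresponding $f$-level form of the same scheme admits the very energy estimate you sketched, since $P(f^n)-f^{n+1}+\sum_i\Gamma_i(f^n)$ is controlled by Lemmas \ref{lem22}--\ref{U<} and \ref{nonlinear1}. An alternative, if you prefer to keep your first iteration, is to prove positivity \emph{a posteriori} on the constructed solution $f$: once $f\in C([0,T_*];H^N)$ with $E(f)$ small is in hand, the solution $F=J^0+\sqrt{J^0}f$ satisfies the linear transport equation $\partial_t F+\hat q\cdot\nabla_x F+\nu F=\nu J(F)$ with $\nu=U_\mu q^\mu/q^0>0$ and $J(F)\ge0$ (the latter because smallness of $E(f)$ keeps $n>0$ and $\beta>0$ via Proposition \ref{beta decreasing2}), so Duhamel applied to $F$ directly, not to a new iteration, gives $F\ge0$. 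Either fix is short, but as written the two-iteration argument does not close.

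A smaller remark: the commutator terms produced when $\partial_\beta$ hits $\hat q$ involve \emph{higher} $x$-derivatives with fewer $q$-derivatives; they are not absorbable at fixed $(\alpha,\beta)$ but must be handled hierarchically by summing with weights $C_{|\beta|}$, as the paper does in its energy estimate in Section 5.3. This is standard but worth stating precisely.
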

%
%
%
%
\subsection{Coercivity of L}
We decompose $f$ into the macro and micro parts:
$$
f=P(f)+\{I-P\}(f)
$$
where $P(f)$ takes the form of
\begin{align*}
P(f)=\tilde{a}\sqrt{J^0}+b\cdot q\sqrt{J^0}+cq^0\sqrt{J^0},
\end{align*}
and rewrite the linearized Anderson-Witting model \eqref{LAW} as follows:
\begin{align*}
	\{\partial_{t}+\hat{q}\cdot\nabla_{x}\}P(f)&=\{-\partial_{t}-\hat{q}\cdot\nabla_{x}+L\}\{I-P\}(f)+\Gamma (f)\cr
	&\equiv l\{I-P\}(f)+h(f).
\end{align*}
Comparing both sides of the equation with respect to the basis:
\begin{equation}\label{basis}
\{e_{a_0}, e_{a_i}, e_{bc_i}, e_{ij}, e_{c}\}
=\Big\{\sqrt{J_0},\ \frac{q_{i}}{q^{0}}\sqrt{J_0},\ q_{i}\sqrt{J_0},\ \frac{q_{i}q_{j}}{q^{0}}\sqrt{J_0},\ q^{0}\sqrt{J_0}\Big\},
\end{equation}
we obtain the following micro-macro system:
	\begin{enumerate}
		\item $\partial_{t}\tilde{a}=l_{a_0}+h_{a_0}$,
		\item $\partial_{t}c=l_{c}+h_{c}$,
		\item $\partial_{t}b_{i}+\partial_{x_{i}}c=l_{bc_i}+h_{bc_i}$,
		\item $\partial_{x_{i}}\tilde{a}=l_{ai}+h_{ai}$,
		\item $(1-\delta_{ij})\partial_{x_{i}}b_{j}+\partial_{x_{j}}b_{i}=l_{ij}+h_{ij}$,
	\end{enumerate}
where $l_{a_0},\cdots, l_{c}$ and $h_{a_0},\cdots,h_{c}$ denote the inner product of $l\{I-P\}(f)$ and $h(f)$ with the corresponding basis (\ref{basis}).
The standard analysis of the system \cite{Guo whole,Guo VMB} gives
\begin{align*}
\sum_{|\alpha|\le N}\|\partial^{\alpha}P(f)\|^{2}_{L^2_{x,q}}\le
		 C\sum_{|\alpha|\le N}\big\{\|\{I-P\}(\partial^{\alpha}f)\|^{2}_{L^2_{x,q}}+\sqrt{E(t)}\|\partial^{\alpha}f\|^{2}_{L^2_{x,q}}\big\}.
\end{align*}
This, combined with Proposition \ref{pro} (2), gives the dissipative estimate of $L$ for sufficiently small $E(f)$:
	\begin{equation}\label{coercivity}
	\sum_{|\alpha|\le N}\langle L(\partial^{\alpha}f),\partial^{\alpha}f\rangle_{x,q}\le-\delta\sum_{|\alpha|\le N}\|\partial^{\alpha}f\|^{2}_{L^2_{x,q}}
	\end{equation}
for some $\delta>0$.
\subsection{Proof of Theorem \ref{main2}}
Applying $\partial^{\alpha}_{\beta}$ to (\ref{LAW}), taking inner product with $\partial^{\alpha}_{\beta}f$, and employing Lemma \ref{nonlinear1} and \eqref{coercivity}, we have from standard arguments \cite{Guo VMB} that
\begin{align*}
	\frac{1}{2}\frac{d}{dt}\|\partial^{\alpha}f\|^{2}_{L^2_{x,q}}+\delta\|\partial^{\alpha}f\|^{2}_{L^2_{x,q}}&\le C\sqrt{E(t)}E(t)\quad (\beta=0),
\end{align*}
and

\begin{align*}
	&\frac{d}{dt}\|\partial^{\alpha}_{\beta}f\|^{2}_{L^2_{x,q}}+\delta\|\partial^{\alpha}_{\beta}f\|^{2}_{L^2_{x,q}}\le C\biggl\{\sum_{|k|<|\beta|}\sum_{i=1}^{3}\|\partial_{x_{i}}\partial^{\alpha}_{k}f\|^{2}_{L^2_{x,q}} +\sqrt{E (t)}E(t)\biggl\}\quad (\beta\neq0).
\end{align*}
We then combine these estimates to derive the following energy estimate \cite{Guo VMB}:
\begin{equation*}
	\sum_{|\alpha|+|\beta|\le N}\biggl(C_{|\beta|}\frac{d}{dt}\|\partial^{\alpha}_{\beta}f\|^{2}_{L^2_{x,q}}+\delta_{N}\|\partial^{\alpha}_{\beta}f\|^{2}_{L^2_{x,q}}\biggl)\le C_{N*}\sqrt{E(t)} E(t)
\end{equation*}
for some positive constants $C_{|\beta|}$, $\delta_{N}$.
Then, the desired result follows from the standard continuity argument.\newline\newline
\noindent{\bf Acknowledgement}
The work of Seok-Bae Yun was supported by Samsung Science and Technology Foundation under Project Number SSTF-BA1801-02.

\bibliographystyle{amsplain}

\end{document}